\def\XXint#1#2#3{{\setbox0=\hbox{$#1{#2#3}{\int}$}
     \vcenter{\hbox{$#2#3$}}\kern-.5\wd0}}
\renewcommand{\d}{\textup{d}}
\newcommand{\R}{\mathbb{R}}
\newcommand{\pr}[2]{\textup{P}_{#1}\left(#2\right)}
\newcommand{\set}[2]{\left\{#1\,\left|\;#2\right.\right\}}
\newcommand{\mint}[3]{\int\limits_{#1}#2\,\textup{d}#3}
\newcommand{\D}[1]{{\mathcal D\big((0,T),H^1(\Gamma(#1))\big)}}
\newcommand{\W}[1]{{W_{#1}(0,T)}}
\newcommand{\Ltwo}[1]{{L^2\left(\Gamma(#1)\right)}}
\newcommand{\Hone}[1]{{H^1(\Gamma(#1))}}
\newcommand{\Hmone}[1]{{H^{-1}(\Gamma(#1))}}
\newtheorem{Theorem}{Theorem}[section]
\newtheorem{Lemma}[Theorem]{Lemma}
\newtheorem{LemmaAndDef}[Theorem]{Lemma and Definition}
\newtheorem{Corollary}[Theorem]{Corollary}
\theoremstyle{definition}
\newtheorem{Remark}[Theorem]{Remark}
\newtheorem{Example}[Theorem]{Example}
\newtheorem{Assumption}[Theorem]{Assumption}
\numberwithin{equation}{section}
\def\mathref#1{\ifmmode\mathrm{(\ref{#1})}\else(\ref{#1})\fi}
\def\rhx2{\sqrt{1+ r_{h,x}^2}}
\def\L2{{L^2(\Gamma(t))}}
\def\Ld{{L^2(\Gamma^h(t))}}
\def\C1{{C^1(\Gamma)}}
\def\Ltwoh{{L^2(\Gamma^h)}}
\def\C{C_\textup{int}}
\def\div{\textup{ div}\:}
\def\C{c_\textup{int}}
\def\ldown{{(\cdot)_l}}
\def\lup{{(\cdot)^l}}
\title{Control-constrained parabolic optimal control problems on evolving surfaces -- theory and variational discretization}
\author{
Morten Vierling\footnote{Schwerpunkt Optimierung und Approximation,
Universit\"at Hamburg, Bundesstra{\ss}e 55, 20146 Hamburg, Germany.}
}
\date{\today}
\begin{document}

\maketitle

\begin{center}
 {\bf \LARGE  }
 \end{center}
 
 {\small {\bf Abstract:} 
 We consider control-constrained linear-quadratic optimal control problems on evolving hypersurfaces in $\mathbb R^{n+1}$. In order to formulate well-posed problems, we prove existence and uniqueness of weak solutions for the state equation, in the sense of vector-valued distributions. We then carry out and prove convergence of the variational discretization of a distributed optimal control problem. In the process, we investigate the convergence of a fully discrete approximation of the state equation, and obtain optimal orders of convergence under weak regularity assumptions. We conclude with a numerical example.
 }
 \\[2mm]
{\small {\bf Mathematics Subject Classification (2010): 58J35 , 49J20, 49Q99, 35D30, 35R01} } \\[2mm] 
{\small {\bf Keywords:} Evolving surfaces, weak solutions, parabolic optimal control, error estimates.}

\pagenumbering{arabic}
\section{Introduction}\label{S:Intro}

We investigate parabolic optimal control problems on evolving material hypersurfaces in $\mathbb R^{n+1}$. 
 Following \cite{DziukElliott2007}, we consider a parabolic state equation in its weak form
\begin{equation}\label{E:Transport}
\frac{\textup{d}}{\textup{d}t}\mint{\Gamma(t)}{y\,\varphi}{\Gamma(t)}+\mint{\Gamma(t)}{\nabla_\Gamma y\cdot\nabla_\Gamma\varphi}{\Gamma(t)}=\mint{\Gamma(t)}{y\,\dot\varphi}{\Gamma(t)}+\mint{\Gamma(t)}{f\,\varphi}{\Gamma(t)}\,,
\end{equation}
where $\Gamma = \big\{\Gamma(t)\big\}^{t\in[0,T]}$ is a family of $C^2$-smooth, compact $n$-dimensional surfaces in $\R^{n+1}$, evolving smoothly in time with velocity $V$. Further assume $f$ sufficiently smooth and let $\dot \varphi=\partial_t\varphi+V\nabla \varphi $ denote the material derivative of a smooth test function $\varphi$.

We start by defining unique weak solutions for the state equation. The idea is to pull back the problem onto a fixed domain, introducing distributional material derivatives in the sense of \cite{LionsMagenes1968} and a $W(0,T)$-like solution space. As a consequence, a large part of the theory developed around $W(0,T)$ for fixed domains applies, compare for example  \cite{LionsMagenes1968} and \cite{Lions1971} .

An alternative approach to prove existence of weak solutions along the lines of \cite{Ladyzhenskaya1968} is taken in \cite{Schumacher2010}, that entirely avoids the notion of vector-valued distributions.

Recent works also deal with the discretization of \eqref{E:Transport}, both in space, compare \cite{DziukElliott2010_PP},  and time, see \cite{DziukLubichMansour2010} and \cite{DziukElliott2011_PP}. 

In \cite{DziukElliott2010_PP} order-optimal  error bounds of type $\sup_{t\in[0,T]}\|\cdot\|_{L^2(\Gamma(t))}$ are derived for the discretization of the state equation, assuming a slightly higher regularity of the state than is used in section \ref{S:FEDisc} and \ref{S:DGDisc}, where we derive $\left(\int_0^T\|\cdot\|^2_{L^2(\Gamma(t))}\d t\right)^\frac{1}{2}$-like bounds. A class of Runge-Kutta methods to tackle the space-discretized problem is investigated in \cite{DziukLubichMansour2010}, assuming among other things that one can evaluate $f$ in a point-wise fashion, i.e. that $f(t)\in L^2(\Gamma(t))$ is well defined. For a fully discrete approach and the according error bounds see  \cite{DziukElliott2011_PP}. There a backwards Euler method is considered for  time discretization whose implementation resembles our discontinuous Galerkin approach in Section \ref{S:DGDisc}. Yet while the approach in  \cite{DziukElliott2011_PP} ultimately leads to $\sup_{t\in[0,T]}\|\cdot\|_{L^2(\Gamma(t))}$-convergence, we allow for non-smooth controls and thus cannot expect to obtain such strong convergence estimates.

Basic facts on  control constrained  parabolic optimal control problems and their discretization can be found for example in \cite{Troeltzsch2005} and \cite{MeidnerVexler2008_II}, respectively.

The paper is structured as follows. We begin with a very short introduction into the setting in Section \ref{S:Setting}.
In order to formulate well posed optimal control problems we first proof the existence of an appropriate weak solution in Section \ref{S:WeakSolutions}, complementing the existence results from \cite{DziukElliott2007}. We then use the the results from Section \ref{S:WeakSolutions} in order to formulate control constrained optimal control problems in section \ref{S:OCP}. Afterwards, we examine the space- and time-discretization of the state equation in Sections \ref{S:FEDisc} and \ref{S:DGDisc}, before returning to the optimal control problems in Section \ref{S:VarDisc}. There we apply variational discretization in the sense of \cite{Hinze2005} to achieve fully implementable optimization algorithms. We end the paper by giving a numerical example in Section \ref{S:Example}.

\section{Setting}\label{S:Setting}
Before we can properly formulate \eqref{E:Transport}, let us introduce some basic tools  and clarify what  our assumptions are regarding the family $\big\{\Gamma(t)\big\}_{t\in[0,T]}$.
 \begin{Assumption}\label{A:Phi}
 The hypersurface $\Gamma_0=\Gamma(0)\subset\R^{n+1}$ is $C^2$-smooth and compact (i.e. without boundary). $\Gamma$ evolves along a $C^2$-smooth velocity field $V:\R^{n+1}\times[0,T]\rightarrow \R^{n+1}$ with flow $\bar\Phi:\R^{n+1}\times[0,T]^2\rightarrow \R^{n+1}$, such that its restriction 
 $\Phi^s_t(\,\cdot\,):\Gamma(s)\rightarrow\Gamma(t)$ is a diffeomorphism for every $s,t\in [0,T]$.
\end{Assumption}

The assumption gives rise to a second representation of $\Gamma(t)$ and in particular implies $\Gamma(t)$ to be orientable with a smooth unit normal field $\nu(\cdot,t)$. As a consequence,  the evolution of  $\Gamma$ can be described as the level set of the signed distance function $d$ such that 
$$
\Gamma(t)=\set{x\in\R^{n+1}}{d(x,t)=0}\,,
$$
as well as $|d(x,t)| = \textup{dist}(x,\Gamma(t))$ and $\nabla d(x,t)=\nu(x,t)$ for $x\in\Gamma(t)$. Further, we have $d(\cdot,t)\in C^2(\mathcal N_r(t))$ for some tubular neighborhood  $\mathcal N_r(t)=\set{x\in\R^{n+1}}{|d(x,t)|\le r}$ of $\Gamma(t)$. Due to the uniform boundedness of the curvature of $\Gamma(t)$ the radius $r>0$ does not depend on $t\in[0,T]$. The domain of $d$ is $\mathcal N_T=\bigcup_{t\in[0,T]}\mathcal N_r(t)\times\{t\}$ which is a neighborhood of $\bigcup_{t\in[0,T]}\Gamma(t)\times\{t\}$ in $\R^{n+2}$. 

Using $d$ we can define the projection
\begin{equation}\label{E:Projection}a_t:\mathcal N_r(t)\rightarrow\Gamma(t)\,,\quad a_t(x) = x-d(x,t)\nabla d(x,t)\,,\end{equation} 
which allows us to extend any function $\phi:\Gamma(t)\rightarrow\R$ to $\mathcal N_r(t)$ by $\bar \phi(x) = \phi(a_t(x))$. Hence we can represent the surface gradient in global exterior coordinates $\nabla_{\Gamma(t)} \phi=(I-\nu(\cdot,t)\nu(\cdot,t)^T)\nabla \bar\phi$ as the euclidean projection of the gradient of $\bar\phi$ onto the tangential space of $\Gamma(t)$. In the following we will write $\nabla_{\Gamma}$ instead of $\nabla_{\Gamma(t)}$, wherever it is clear which surface $\Gamma(t)$ the gradient relates to.


\newcommand{\sgrad}[1]{{\nabla_{\Gamma}}}
\newcommand{\sgradh}[1]{{\nabla_{\Gamma^h}}}
We are going to exploit existing results on vector-valued distributions, which we  recall here for completeness. 
 In order to define weak  derivatives consider  $\mathcal D((0,T))$, the space of real valued $C^\infty$-smooth functions with compact support in $(0,T)$. Fix $s\in[0,T]$. Each $y\in  L^2((0,T),H^{1}(\Gamma(s)))$ defines a vector-valued distribution $\mathcal T_y:\mathcal D((0,T))\rightarrow  H^1(\Gamma(s))$ through the $H^1(\Gamma(s))$-valued integral $\mint{[0,T]}{ y(t)\varphi(t)}{t}$. 
 
 Its distributional derivative is said to lie in $ L^2((0,T),H^{-1}(\Gamma(s)))$ iff it can be represented by $w\in  L^2((0,T),H^{-1}(\Gamma(s)))$ in the following sense
\begin{equation}\label{E:DistrDer}
\mathcal T_y'(\varphi)=\mint{[0,T]}{ y(t)\varphi'(t)}{t}=- \mint{[0,T]}{w(t)\varphi(t)}{t}\in H^1(\Gamma(s))\,,\quad\forall \varphi\in \mathcal D((0,T))\,,
\end{equation}
 and we write $y'=w$. Note that by $H^{-1}$ we denote the representation of the dual $(H^1)^*$ which arises from $L^2\supset H^1$ by completion.

\begin{Lemma}\label{L:WZeroT}
For $s\in[0,T]$, the space $$\W{s}=\set{v\in L^2((0,T),H^1(\Gamma(s)))}{v'\in L^2((0,T),H^{-1}(\Gamma(s)))}$$ with scalar product $\int_0^T \langle\cdot,\cdot\rangle_{H^1(\Gamma(s))}+\langle(\cdot)',(\cdot)'\rangle_{H^{-1}(\Gamma(s))}\d t$ is a Hilbert space.
\begin{enumerate}
\item $\W{s}$ is compactly embedded into $C([0,T],L^2(\Gamma(s)))$, the space of continuous $L^2$-valued functions. 
\item Denote by $\mathcal D([0,T],H^1(\Gamma(s)))$ the space of $C^\infty$-smooth $H^1(\Gamma(s))$-valued test functions on $[0,T]$. The inclusion $\mathcal D([0,T],H^1(\Gamma(s)))\subset \W{s}$ is dense.
\item For two functions $v,w\in\W{s}$ the product $\langle v(t),w(t)\rangle_{L^2(\Gamma(s))}$ is absolutely continuous with respect to $t\in [0,T]$ and
\[
\frac{\d}{\d t}\mint{\Gamma(s)}{v(t)w(t)}{\Gamma(s)}=\langle v',w\rangle_{H^{-1}(\Gamma(s)),H^1(\Gamma(s))}+\langle v,w'\rangle_{H^{1}(\Gamma(s)),H^{-1}(\Gamma(s))}\,,
\]
a.e. in $(0,T)$, and as a consequence there holds the formula of partial integration
\[
\mint{[r,t]}{\langle v',w\rangle_{H^{-1},H^1}}{\tau} = \langle v(t),w(t)\rangle_{L^2(\Gamma(s))}- \langle v(r),w(r)\rangle_{L^2(\Gamma(s))} -\mint{[r,t]}{\langle v,w'\rangle_{H^{1},H^{-1}}}{\tau}\,.
\]
\end{enumerate}
\end{Lemma}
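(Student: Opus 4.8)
The plan is to observe that, with $s$ fixed, the surface $\Gamma(s)$ is a single $C^2$, compact $n$-dimensional manifold, so that
$$\Hone{s}\hookrightarrow\Ltwo{s}\cong\big(\Ltwo{s}\big)^*\hookrightarrow\Hmone{s}$$
is a Gelfand (evolution) triple: the embeddings are continuous and dense, $\Ltwo{s}$ is identified with its own dual, and---crucially---Rellich--Kondrachov on the compact manifold $\Gamma(s)$ makes $\Hone{s}\hookrightarrow\Ltwo{s}$ \emph{compact}. Once this is in place the whole statement is an instance of the abstract $W(0,T)$-theory of \cite{LionsMagenes1968} and \cite{Lions1971}; no geometry of $\Gamma$ enters the remaining arguments.

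That $\W{s}$ is a Hilbert space I would obtain by checking that the given bilinear form is an inner product and that $\W{s}$ is complete. Completeness reduces to the distributional derivative $(\cdot)'$ from \eqref{E:DistrDer} being a closed operator: for a Cauchy sequence one has limits $v$ in $L^2((0,T),\Hone{s})$ and $w$ in $L^2((0,T),\Hmone{s})$, and passing to the limit in the defining identity \eqref{E:DistrDer} gives $v'=w$, hence $v\in\W{s}$.

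For the density in (2) I would first mollify in time to gain $C^\infty$-regularity in $t$ and then truncate in the eigenbasis of the Laplace--Beltrami operator on $\Gamma(s)$ to reach finite-rank-in-space elements of $\mathcal D([0,T],\Hone{s})$; this is the standard Lions--Magenes argument and uses only the triple structure. The product rule and partial integration in (3) then follow by the usual two-step scheme: both identities are elementary for $v,w\in\mathcal D([0,T],\Hone{s})$, both sides are continuous in the $\W{s}$-norm, and (2) propagates them to all of $\W{s}$, with absolute continuity of $t\mapsto\langle v(t),w(t)\rangle_{\Ltwo{s}}$ falling out of the integrated form. Taking $v=w$ in (3) and estimating $\sup_t\|v(t)\|_{\Ltwo{s}}^2$ against a mean value plus $2\|v'\|_{L^2((0,T),\Hmone{s})}\|v\|_{L^2((0,T),\Hone{s})}$ yields the \emph{continuous} embedding $\W{s}\hookrightarrow C([0,T],\Ltwo{s})$.

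The compactness in (1) is the main obstacle, since it is the only part that does not reduce to a soft citation. It rests on the compact embedding $\Hone{s}\hookrightarrow\Ltwo{s}$ via an Aubin--Lions--Simon argument: from a bounded sequence in $\W{s}$ I would extract a subsequence converging weakly in $L^2((0,T),\Hone{s})$ with derivatives converging weakly in $L^2((0,T),\Hmone{s})$, and then upgrade to strong convergence in $L^2((0,T),\Ltwo{s})$ using equicontinuity of $t\mapsto v(t)$ in $\Hmone{s}$ (from the uniform $L^2((0,T),\Hmone{s})$-bound on $v'$) together with the compactness of $\Ltwo{s}\hookrightarrow\Hmone{s}$. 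The delicate point is the passage to the continuous-in-time topology: the interpolation inequality $\|\cdot\|_{\Ltwo{s}}\le C\|\cdot\|_{\Hone{s}}^{1/2}\|\cdot\|_{\Hmone{s}}^{1/2}$ does not control the supremum in $t$ at the time-layers where the $\Hone{s}$-norm concentrates, so this strengthening is where the real work lies and must be argued by hand rather than quoted.
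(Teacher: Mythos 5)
Your overall route---setting up the Gelfand triple $\Hone{s}\hookrightarrow\Ltwo{s}\hookrightarrow\Hmone{s}$ on the fixed compact manifold $\Gamma(s)$ and then running the abstract $W(0,T)$ machinery---is exactly the one the paper takes; its ``proof'' consists of the citations \cite[Ch.~I, Thms.~3.1 and 2.1]{LionsMagenes1968}, \cite[Ch.~5, Thm.~3]{Evans1998}. Your arguments for completeness (closedness of the distributional derivative), for the density statement in (2), for the product rule and partial integration in (3), and for the \emph{continuous} embedding $\W{s}\hookrightarrow C([0,T],\Ltwo{s})$ via $\sup_t\|v(t)\|^2_{\Ltwo{s}}\le \dashint\|v\|_{\Ltwo{s}}^2\,\d t+2\|v'\|_{L^2((0,T),\Hmone{s})}\|v\|_{L^2((0,T),\Hone{s})}$ are all correct and standard.

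The genuine gap is the one you flag yourself in part (1), and it cannot be closed, because the compactness into $C([0,T],\Ltwo{s})$ is false as stated: Aubin--Lions gives compactness of $\W{s}\hookrightarrow L^2((0,T),\Ltwo{s})$, but this does not upgrade to the uniform topology. Concretely, let $\{\phi_n\}$ be $\Ltwo{s}$-normalized eigenfunctions of the Laplace--Beltrami operator on $\Gamma(s)$ with eigenvalues $\mu_n\to\infty$, so that $\|\phi_n\|_{\Hone{s}}\sim\mu_n^{1/2}$ and $\|\phi_n\|_{\Hmone{s}}\sim\mu_n^{-1/2}$, and set $v_n(t)=\psi_n(t)\,\phi_n$ with $\psi_n$ a smooth bump of height $1$ supported on an interval of length $\sim\mu_n^{-1}$, so that $\|\psi_n\|_{L^2(0,T)}\sim\mu_n^{-1/2}$ and $\|\psi_n'\|_{L^2(0,T)}\sim\mu_n^{1/2}$. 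Then $\|v_n\|_{\W{s}}$ is bounded and $v_n\rightharpoonup0$, yet $\|v_n\|_{C([0,T],\Ltwo{s})}=1$ for every $n$, so no subsequence converges in $C([0,T],\Ltwo{s})$; this is precisely the ``concentration at a time layer'' you worried about. The honest statement of item (1) is therefore: the embedding into $C([0,T],\Ltwo{s})$ is continuous (this is \cite[Ch.~I, Thm.~3.1]{LionsMagenes1968} and is all that is actually used later, e.g.\ for the continuity of $S_T$ in Section \ref{S:OCP}), and the embedding into $L^2((0,T),\Ltwo{s})$ is compact. Your proposal should prove these two facts and not attempt the stronger claim.
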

For a proof of the lemma, see \cite[Ch. I,Thrms. 3.1 and 2.1]{LionsMagenes1968}. In fact one can use the formula of partial integration to prove the embedding into $C((0,T),L^2(\Gamma(s)))$, see \cite[Ch. 5,Thrm 3]{Evans1998}. For further references see \cite[Thm. 3.10]{Troeltzsch2005}.

Our approach to weak material derivatives relies on the following equivalent formulation of condition \eqref{E:DistrDer}
 \begin{equation}\label{WeakDer_equiv}\begin{aligned}
 \forall \varphi\in \mathcal D((0,&T),H^1(\Gamma(s))):\\
&\mint{[0,T]}{ \langle y(t),\varphi'(t)\rangle_{L^2(\Gamma(s))}+\langle w(t),\varphi(t)\rangle_{H^{-1}(\Gamma(s)),H^{1}(\Gamma(s))}}{t}=0\,,
\end{aligned} \end{equation}
which defines the weak derivative $y'=w$ of a function $y\in L^2((0,T),H^1(\Gamma(s)))$ via its $L^2((0,T),L^2(\Gamma(s)))$-scalar product with elements of $\mathcal D((0,T),H^1(\Gamma(s)))$.

The equality \eqref{WeakDer_equiv} follows from \eqref{E:DistrDer} by Lemma \ref{L:WZeroT}[\emph{3.}]. On the other hand \eqref{E:DistrDer} is a consequence of \eqref{WeakDer_equiv}. To see this, test \eqref{WeakDer_equiv} with $\psi v\in\mathcal D((0,T),H^1(\Gamma(s)))$, where $\psi\in\mathcal D((0,T))$ and $v\in H^1(\Gamma(s))$.

\section{Weak solutions}\label{S:WeakSolutions}

The scope of this section is to formulate appropriate function spaces and a related weak material derivative, in order to prove the existence of unique weak solutions of  \eqref{E:Transport} for quite weak right-hand sides $f$. 

We start by  defining the strong material derivative for smooth functions $f\in C^1(\mathbb R^{n+1}\times[0,T])$, namely the derivative
\begin{equation}\label{E:MatDerStrong}\dot f(x,t)=\frac{\d}{\d s}\Big|_{s=t}f(\Phi^t_s(x),s)=\nabla f(x,t) V(x,t)+\partial_t f(x,t)\,,\end{equation}
along trajectories of the velocity field $V$.
The material derivative has the following properties.
\begin{Lemma}\label{L:MatDerSmooth}
Let $f$ be sufficiently smooth. Then
\[
\frac{\d}{\d t}\mint{\Gamma(t)}{f}{\Gamma(t)}=\mint{\Gamma(t)}{\dot f+\div_{\Gamma}V}{\Gamma(t)},
\]
and
\[
\frac{\d}{\d t}\mint{\Gamma(t)}{\|\sgrad{t}f\|^2}{\Gamma(t)}=\mint{\Gamma(t)}{2\sgrad{t}f\cdot \sgrad{t}\dot f-2\sgrad{t}f(D_{\Gamma}V)\sgrad{t}f+\|\sgrad{t}f\|^2\div_{\Gamma}V}{\Gamma(t)}\,,
\]
with $\div_{\Gamma(t)}V = \sum_{i=1}^{n+1}\nabla_{\Gamma(t)}^iV^i$ and $(D_{\Gamma(t)}V)_{ij}=\nabla_{\Gamma(t)}^jV^i$.
\end{Lemma}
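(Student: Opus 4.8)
The plan is to reduce both identities to computations on the fixed reference surface $\Gamma_0=\Gamma(0)$ by pulling back along the flow $\Phi^0_t:\Gamma_0\rightarrow\Gamma(t)$. Writing $J_t:\Gamma_0\rightarrow\R$ for the Jacobian determinant of $\Phi^0_t$ relative to the surface measures, the transformation rule gives, for any sufficiently smooth scalar $g$,
\[
\int_{\Gamma(t)} g\,\d\Gamma(t)=\int_{\Gamma_0}(g\circ\Phi^0_t)\,J_t\,\d\Gamma_0\,.
\]
Since the domain $\Gamma_0$ no longer depends on $t$ and both $g\circ\Phi^0_t$ and $J_t$ are smooth in $t$, I may differentiate under the integral sign. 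By \eqref{E:MatDerStrong} the $t$-derivative of $g\circ\Phi^0_t$ is precisely the pulled-back material derivative $\dot g\circ\Phi^0_t$, so the first formula follows (with integrand $\dot g+g\,\div_\Gamma V$) once I establish the infinitesimal area-change identity $\partial_t J_t=(\div_\Gamma V\circ\Phi^0_t)\,J_t$. This identity is the heart of Part 1: I would prove it via Jacobi's formula for the derivative of a determinant, applied to the Gram matrix of $\Phi^0_t$ in a local parametrization of $\Gamma_0$, using that $\partial_t\Phi^0_t=V\circ\Phi^0_t$ and that the tangential part of the ambient gradient of $V$ contributes the surface divergence while the normal part drops out.

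For the second identity I would apply Part 1 to the scalar integrand $g=\|\nabla_\Gamma f\|^2$. Writing $w:=\nabla_\Gamma f$ for the (ambient, tangential) gradient vector field, Part 1 produces the term $\|w\|^2\div_\Gamma V$ together with the material derivative $\frac{\d}{\d t}\|w\|^2=2\,w\cdot\dot w$. Everything then hinges on the commutation relation between material derivative and surface gradient, which I expect to take the form
\[
\dot w=\nabla_\Gamma\dot f-(D_\Gamma V)\,w+(\text{terms parallel to }\nu)\,.
\]
Since $w$ is tangential, pairing with $w$ annihilates the normal contributions and yields $2\,w\cdot\nabla_\Gamma\dot f-2\,w^{T}(D_\Gamma V)\,w$, which is exactly the claimed integrand once combined with $\|w\|^2\div_\Gamma V$.

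To derive the commutation relation I would differentiate the representation $\nabla_\Gamma f=(I-\nu\nu^T)\nabla\bar f$ from Section \ref{S:Setting} along trajectories of $V$, which requires the material derivatives of the normal field $\nu$ and of the tangential projector $I-\nu\nu^T$, together with the interchange of $\partial_t$, spatial differentiation, and the flow; the term $-(D_\Gamma V)\,w$ originates from this interchange, while the evolution of $\nu$ produces only multiples of $\nu$ and therefore disappears after pairing with $w$. The \textbf{main obstacle} is precisely this careful bookkeeping of normal versus tangential components in the commutator; once it is settled, both transport formulas follow by direct substitution, and the smoothness hypotheses on $f$ and on the evolution guarantee that all differentiations under the integral sign are legitimate.
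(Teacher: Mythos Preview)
Your approach is correct and is precisely the standard one. The paper itself does not give a proof of this lemma but refers to the Appendix of \cite{DziukElliott2007}, and the argument there proceeds exactly along the lines you sketch: pull back to $\Gamma_0$, use Jacobi's formula to obtain $\partial_t J_t=(\div_\Gamma V\circ\Phi^0_t)J_t$ for the first identity, then apply the first identity to $\|\nabla_\Gamma f\|^2$ and compute the material derivative of the surface gradient via the commutator you describe. One small caveat: in the commutation relation the matrix that appears is naturally $(D_\Gamma V)^T$ rather than $D_\Gamma V$, but since you only use it inside the quadratic form $w^T(\cdot)w$ this is immaterial, and your observation that the normal contributions drop out after pairing with the tangential vector $w$ is exactly the point.
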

A proof and details can be found in the Appendix of \cite{DziukElliott2007}.
\begin{LemmaAndDef}\label{L:basics}
Let 
$
J^s_t(\,\cdot\,)= \det \textup{D}_{\Gamma(s)}\Phi^s_t(\,\cdot\,)
$
denote the Jacobian determinant of the matrix representation of $\textup{D}_{\Gamma(s)}\Phi^s_t(\,\cdot\,)$ with respect to orthogonal bases of the respective tangent space. By Assumption \ref{A:Phi} $J^s_t\in C^1([0,T]\times\Gamma(s))$ and there exists $C_J>0$, such that for all $s,t\in[0,T]$
$$
\frac{1}{C_J}\le \min_{\gamma\in\Gamma(s)}J^s_t(\gamma)\le \max_{\gamma\in\Gamma(s)}J^s_t(\gamma)\le C_J\,.
$$
Given Assumption \ref{A:Phi}, consider the family $\big\{L^2(\Gamma(t))\big\}_{t\in[0,T]}$. Then for $v\in L^2(\Gamma(t))$ we introduce the pull-back
$$
\phi^s_{t} v=v\big(\Phi^s_{t}(\,\cdot\,)\big)\in L^2(\Gamma(s))\,,
$$
which is a linear homeomorphism from $L^2(\Gamma(t))$ into $L^2(\Gamma(s))$ for any $s,t\in[0,T]$. Moreover $\phi^s_{t}$ is a linear homeomorphism from $H^1(\Gamma(t))$ into $H^1(\Gamma(s))$
. Thus finally the adjoint operator, ${\phi^s_{t}}^* :H^{-1}(\Gamma(s))\rightarrow H^{-1}(\Gamma(t))$ is also a linear homeomorphism. 
There exist constants $C_{L^2(\Gamma)},C_{H^1(\Gamma)}$ independent of $s,t$, such that for all $v\in L^2(\Gamma(t))$, or $v\in H^1(\Gamma(t))$ respectively, and for all $s,t\in[0,T]$
$$
\|\phi^s_{t}v\|_{H^1(\Gamma(s))}\le C_{H^1(\Gamma)}\|v\|_{H^1(\Gamma(t))}\,,\quad\|\phi^s_{t}v\|_{L^2(\Gamma(s))}\le C_{L^2(\Gamma)}\|v\|_{L^2(\Gamma(t))}\,,
$$
and thus finally $\|{\phi^s_{t}}^*\|_{\mathcal L(H^{-1}(\Gamma(s)),H^{-1}(\Gamma(t)))}\le C_{H^{1}(\Gamma)}$.

Furthermore there holds $\partial_t J_t^s = \phi_t^s( \div_{\Gamma(t)} V) J_t^s$.
\end{LemmaAndDef}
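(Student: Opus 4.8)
The plan is to prove the analytic facts about $J^s_t$ first and deduce the statements on the pull-back $\phi^s_t$ from them, since the evolution formula for the Jacobian drives everything else. Under Assumption~\ref{A:Phi} the field $V$ is $C^2$, so by the standard theory of smooth dependence of flows on initial data the flow $\bar\Phi$, its tangential differential $\textup{D}_{\Gamma(s)}\Phi^s_t$, and hence $J^s_t$ are $C^1$ jointly in $(t,\gamma)$, which gives the regularity claim. To obtain the evolution formula I would apply the transport theorem of Lemma~\ref{L:MatDerSmooth} to a function with vanishing material derivative: fixing $\eta$ on $\Gamma(s)$ and setting $f(\cdot,t)=\eta(\Phi^t_s(\cdot))$ on $\Gamma(t)$, the cocycle identity $\Phi^\tau_s\circ\Phi^t_\tau=\Phi^t_s$ shows $f$ is constant along trajectories, so $\dot f\equiv 0$. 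Writing $\int_{\Gamma(t)}f\,\d\Gamma(t)=\int_{\Gamma(s)}\eta\,J^s_t\,\d\Gamma(s)$ by change of variables and differentiating in $t$, the first identity of Lemma~\ref{L:MatDerSmooth} gives $\int_{\Gamma(s)}\eta\,\partial_t J^s_t\,\d\Gamma(s)=\int_{\Gamma(s)}\eta\,(\div_{\Gamma}V)(\Phi^s_t)\,J^s_t\,\d\Gamma(s)$ for every $\eta$, whence $\partial_t J^s_t=\phi^s_t(\div_{\Gamma}V)\,J^s_t$ pointwise.

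Since $J^s_s\equiv 1$, integrating this formula yields the representation $J^s_t(\gamma)=\exp\!\big(\int_s^t(\div_{\Gamma}V)(\Phi^s_\tau(\gamma))\,\d\tau\big)$, which is manifestly positive and, because $V\in C^2$ and the $\Gamma(\tau)$ are compact so that $|\div_{\Gamma}V|\le M$ uniformly, obeys $e^{-MT}\le J^s_t\le e^{MT}$; this is the two-sided bound with $C_J=e^{MT}$. In parallel I would record the bound $\|\textup{D}_{\Gamma(s)}\Phi^s_t\|\le e^{LT}$, with $L=\sup\|\textup{D}V\|$, obtained by Gronwall's inequality applied to the variational equation $\tfrac{\d}{\d t}\textup{D}\Phi^s_t=\textup{D}V(\Phi^s_t)\,\textup{D}\Phi^s_t$ with $\textup{D}\Phi^s_s=I$; this supplies the constant needed for the $H^1$ estimate.

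For the pull-back on $L^2$, the substitution $x=\Phi^s_t(\gamma)$ together with $\d\Gamma(t)=J^s_t\,\d\Gamma(s)$ gives $\|\phi^s_t v\|_{L^2(\Gamma(s))}^2=\int_{\Gamma(t)}|v|^2\,J^t_s\,\d\Gamma(t)$, so the bound on $J$ yields $\|\phi^s_t v\|_{L^2(\Gamma(s))}\le\sqrt{C_J}\,\|v\|_{L^2(\Gamma(t))}$ with $\phi^t_s$ as bounded inverse, i.e. a homeomorphism. For $H^1$ I would use the chain rule $\nabla_{\Gamma}(\phi^s_t v)=(\textup{D}_{\Gamma(s)}\Phi^s_t)^{\!\top}(\nabla_{\Gamma}v)(\Phi^s_t)$ and estimate the right-hand side by combining the uniform bound on $\|\textup{D}_{\Gamma(s)}\Phi^s_t\|$ with the same change of variables; with the $L^2$ estimate this gives the uniform constant $C_{H^1(\Gamma)}$ and the homeomorphism property. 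Finally $(\phi^s_t)^*$ is the adjoint of a homeomorphism, hence a homeomorphism, with $\|(\phi^s_t)^*\|_{\mathcal L(H^{-1}(\Gamma(s)),H^{-1}(\Gamma(t)))}=\|\phi^s_t\|_{\mathcal L(H^1(\Gamma(t)),H^1(\Gamma(s)))}\le C_{H^1(\Gamma)}$ by duality.

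The genuinely delicate point is not any single identity but making every estimate uniform in both $s$ and $t$. Concretely, the $H^1$ bound forces the chain rule for surface gradients across two distinct moving surfaces and a uniform control of the tangential differential $\textup{D}_{\Gamma(s)}\Phi^s_t$; the exponential and Gronwall bounds above are exactly what reduce this uniformity to the boundedness of $V$, $\textup{D}V$ and $\div_{\Gamma}V$ over the compact space-time domain $\mathcal N_T$.
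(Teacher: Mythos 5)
Your argument is correct, and it reaches the conclusions by a genuinely different route than the paper in two places. For the $H^1$ estimate the paper does not use the pointwise chain rule at all: it tests $\phi^s_t v$ against $\nabla_{\Gamma(s)}\varphi$ for $\varphi\in C^1(\Gamma(s))$, integrates by parts on $\Gamma(t)$, and thereby \emph{identifies} the weak surface gradient of the pull-back (picking up a mean-curvature term $H_s\nu_s\phi^s_t v$ along the way), which yields the bound with a constant controlled by $|H_t|$ and the first and second space derivatives of $\bar\Phi$. Your route — the chain rule $\nabla_{\Gamma(s)}(\phi^s_t v)=(\textup{D}_{\Gamma(s)}\Phi^s_t)^{\top}(\nabla_{\Gamma}v)\circ\Phi^s_t$ together with a Gronwall bound on the variational equation — is cleaner and makes the uniformity in $(s,t)$ more transparent, but to apply it to $v\in H^1(\Gamma(t))$ you should say one sentence about extending the identity from $C^1(\Gamma(t))$ to $H^1(\Gamma(t))$ by density (or equivalently invoke stability of $H^1$ under $C^1$ changes of variables); the paper's integration-by-parts detour exists precisely to avoid that density step. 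Second, for the Jacobian the paper simply asserts the $C^1$ regularity and the two-sided bound as consequences of Assumption \ref{A:Phi} and refers the identity $\partial_t J^s_t=\phi^s_t(\div_{\Gamma}V)J^s_t$ to the proof of Lemma \ref{L:MatDerSmooth} in Dziuk--Elliott, whereas you derive the identity from the \emph{statement} of Lemma \ref{L:MatDerSmooth} applied to $f=\eta\circ\Phi^t_s$ (which has $\dot f\equiv 0$) and then obtain the bounds from the resulting exponential representation $J^s_t=\exp\bigl(\int_s^t(\div_{\Gamma}V)(\Phi^s_\tau)\,\d\tau\bigr)$. That is a legitimate and more self-contained derivation (not circular from the standpoint of this paper, since Lemma \ref{L:MatDerSmooth} is taken as given), and it buys you explicit constants $C_J=e^{MT}$. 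The $L^2$ change-of-variables estimate and the duality argument for ${\phi^s_t}^*$ coincide with the paper's.
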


\begin{proof}
For $s,t\in[0,T]$ we have
$$
\mint{\Gamma(t)}{v^2}{\Gamma(t)}=\mint{\Gamma(s)}{(\phi^s_{t} v)^2 J^s_t}{\Gamma(s)}
$$
and thus $\|\phi^s_{t} v\|_{L^2(\Gamma(s))}\le C_{L^2(\Gamma)}\|v\|_{L^2(\Gamma(t))}$, with $C_{L^2(\Gamma)}= C_J^{\frac{1}{2}}$.

For $H^1$ equivalence consider $v\in H^1(\Gamma(t))$ and choose $\varphi\in C^1(\Gamma(s))$. Now
$$
\mint{\Gamma(s)}{(\phi^s_{t} v)\sgrad{s}\varphi}{\Gamma(s)}=\mint{\Gamma(t)}{v \,(\text{D}\bar\Phi^s_t)^T\sgrad{t}(\phi^t_s\varphi)J^t_s}{\Gamma(t)}
$$
and because $v\in H^1(\Gamma(t))$ we can integrate by parts on $\Gamma(t)$ to obtain with $\nu_s=\nu(\cdot,s)$
$$
\mint{\Gamma(s)}{(\phi^s_{t} v)\sgrad{s}\varphi}{\Gamma(s)}=-\mint{\Gamma(t)}{w \,(\phi^t_s\varphi)J^t_s}{\Gamma(t)}=-\mint{\Gamma(s)}{(\phi^s_{t} w-H_{s}\nu_{s} \phi^s_tv)\varphi+(\phi^s_tv)\varphi H_{s}\nu_{s} }{\Gamma(s)}\,.
$$
Note that $w\in L^2(\Gamma(t))^{n+1}$ and that $\|w\|_{L^2(\Gamma(t))^{n+1}}\le C \|v\|_{H^1(\Gamma(t))}$, where $C$  depends only on the mean curvature $H_t$ of $\Gamma(t)$ and the second space derivatives of $\bar\Phi$ which are bounded independently of $s,t\in [0,T]$. Now $\nabla_{\Gamma(s)}(\phi^s_{t} v)=\phi^s_{t}w-H_{s}\nu_{s} \phi^s_tv\in L^2(\Gamma(s))$, because as stated above $\|\phi^s_{t} w\|_{L^2(\Gamma(s))^{n+1}}\le C_{L^2(\Gamma)}C\|v\|_{H^1(\Gamma(t))}$, and $\|\phi^s_{t} v\|_{L^2(\Gamma(s))}\le C_{L^2(\Gamma)}\|v\|_{L^2(\Gamma(t))}$. Thus, for some $C_{H^1(\Gamma)}>0$ depending only on a global  bound on $|H_t|$, $\|\partial_{i}\bar \Phi^s_t\|$ and $\|\partial_{ij}\bar \Phi^s_t\|$, $s,t\in[0,T]$, $1\le i,j\le n+1$, there holds
$$
\|\phi^s_{t} v\|_{H^1(\Gamma(s))}\le C_{H^1(\Gamma)} \|v\|_{H^1(\Gamma(t))}\,.
$$

Now $\|\,\cdot\,\|_{H^1(\Gamma(t))}$ and $\|\phi^s_{t} (\,\cdot\,)\|_{H^1(\Gamma(s))}$ are two equivalent norms on $H^1(\Gamma(t))$. Hence also their dual norms are equivalent. The norm of  $f\in \left( H^1(\Gamma(s))\right)'$ can now be expressed by
\begin{equation}\label{E:DualNormEquiv}
\sup_{w\in H^1(\Gamma(s))}\frac{\langle f ,w\rangle_{(H^1(\Gamma(s)))',H^1(\Gamma(s))}}{\|w \|_{H^1(\Gamma(s))}}=\sup_{v\in H^1(\Gamma(t))}\frac{\langle {\phi^s_{t}}^*f ,v\rangle_{(H^1(\Gamma(t)))',H^1(\Gamma(t))}}{\|\phi^s_{t} v \|_{H^1(\Gamma(s))}}\,,
\end{equation}
and the bound on the norm of ${\phi_t^s}^*$ follows from the equivalence of said $H^1$-norms.

The last assertion is a by-product of the proof of Lemma \ref{L:MatDerSmooth}, compare \cite{DziukElliott2007}.
\end{proof}
We need to state one more Lemma concerning continuous time-dependence of the previously defined norms.
\begin{Lemma}\label{L:NormContinuity}
Let $s\in[0,T]$. For $v_1\in H^1(\Gamma(s))$, $v_2\in L^2(\Gamma(s))$, $v_3\in H^{-1}(\Gamma(s))$ the following expressions are continuous with respect to $t\in[0,T]$
\[
\|\phi^t_{s}v_1\|_{H^1(\Gamma(t))}\,,\quad\|\phi^t_{s}v_2\|_{L^2(\Gamma(t))}\,,\quad\|{\phi^s_{t}}^*v_3\|_{H^{-1}(\Gamma(t))}\,.
\]
\end{Lemma}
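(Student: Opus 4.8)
The plan is to reduce each of the three quantities to an integral over the \emph{fixed} surface $\Gamma(s)$, so that the only remaining $t$-dependence sits in continuous, uniformly bounded coefficients, and then to pass to the limit. First I would treat the $L^2$- and $H^1$-expressions directly. Using the transformation rule $\int_{\Gamma(t)} g\,\textup{d}\Gamma(t)=\int_{\Gamma(s)}(\phi^s_t g)\,J^s_t\,\textup{d}\Gamma(s)$ together with $\phi^s_t\phi^t_s=\textup{id}$, one obtains
\[
\|\phi^t_s v_2\|_{L^2(\Gamma(t))}^2=\mint{\Gamma(s)}{v_2^2\,J^s_t}{\Gamma(s)},
\]
and, applying the chain rule $\nabla_{\Gamma(t)}(\phi^t_s v_1)=(\textup{D}_{\Gamma(t)}\Phi^t_s)^T\,\phi^t_s(\nabla_{\Gamma(s)} v_1)$ before transforming,
\[
\|\phi^t_s v_1\|_{H^1(\Gamma(t))}^2=\mint{\Gamma(s)}{\big(v_1^2\,J^s_t+(\nabla_{\Gamma(s)} v_1)^T G(t,\cdot)\,\nabla_{\Gamma(s)} v_1\big)}{\Gamma(s)},
\]
where $G(t,\gamma)$ is the symmetric matrix field assembled from $J^s_t$ and the tangential derivatives of the flow $\Phi^t_s$ evaluated along trajectories.

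By Assumption~\ref{A:Phi} the flow $\bar\Phi$ is $C^2$ on the compact domain, hence its tangential derivatives are $C^1$, and by Lemma and Definition~\ref{L:basics} the Jacobian $J^s_t$ is $C^1$ and pinched between $1/C_J$ and $C_J$; consequently both $J^s_t(\gamma)$ and $G(t,\gamma)$ are jointly continuous on $[0,T]\times\Gamma(s)$ and uniformly bounded. Since $\Gamma(s)$ is compact, $J^s_t\to J^s_{t_0}$ and $G(t,\cdot)\to G(t_0,\cdot)$ \emph{uniformly} in $\gamma$ as $t\to t_0$. Dominating the two integrands by $C_J v_2^2$ respectively $C\big(v_1^2+|\nabla_{\Gamma(s)} v_1|^2\big)\in L^1(\Gamma(s))$, Lebesgue's dominated convergence theorem yields continuity of the first two expressions.

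The $H^{-1}$-norm is the genuine difficulty, since it is a supremum and pointwise convergence of the $H^1$-norms does not suffice to pass to the dual. Writing $F(t,u):=\|\phi^t_s u\|_{H^1(\Gamma(t))}$, the adjoint relation $\langle{\phi^s_t}^* v_3,w\rangle=\langle v_3,\phi^s_t w\rangle$ together with the substitution $u=\phi^s_t w$ (so $w=\phi^t_s u$, and $u$ ranges bijectively over $H^1(\Gamma(s))$) gives
\[
\|{\phi^s_t}^* v_3\|_{H^{-1}(\Gamma(t))}=\sup_{0\neq u\in H^1(\Gamma(s))}\frac{\langle v_3,u\rangle_{H^{-1}(\Gamma(s)),H^1(\Gamma(s))}}{F(t,u)}.
\]
Thus it suffices to show that the norms $F(t,\cdot)$ converge to $F(t_0,\cdot)$ uniformly on $H^1(\Gamma(s))$, i.e.\ $\sup_{u\neq 0}\big|F(t,u)^2/F(t_0,u)^2-1\big|\to 0$. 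This is exactly what the integral representation delivers: the difference $F(t,u)^2-F(t_0,u)^2$ is bounded by $\max\big(\|J^s_t-J^s_{t_0}\|_\infty,\|G(t,\cdot)-G(t_0,\cdot)\|_\infty\big)$ times $\int_{\Gamma(s)}\big(u^2+|\nabla_{\Gamma(s)} u|^2\big)\,\textup{d}\Gamma(s)$, and the latter integral is controlled by $C\,F(t_0,u)^2$ thanks to the uniform norm equivalence of Lemma and Definition~\ref{L:basics}. The uniform coefficient convergence forces the prefactor to zero, so $(1-\varepsilon(t))F(t_0,u)\le F(t,u)\le(1+\varepsilon(t))F(t_0,u)$ with $\varepsilon(t)\to 0$. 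Taking suprema of the reciprocals squeezes the dual norms between $\frac{1}{1+\varepsilon(t)}$ and $\frac{1}{1-\varepsilon(t)}$ times $\|{\phi^s_{t_0}}^* v_3\|_{H^{-1}(\Gamma(t_0))}$, which establishes continuity at $t_0$. The main obstacle is precisely this transfer of continuity to the dual norm, which is why the argument is routed through \emph{uniform} convergence of the equivalent norms $F(t,\cdot)$ rather than mere pointwise convergence.
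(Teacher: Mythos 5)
Your proof is correct and follows essentially the same route as the paper: both pull the $L^2$- and $H^1$-norms back to the fixed surface $\Gamma(s)$ via the change-of-variables formula, use the continuity of $J^s_t$ and of the tangential derivatives of the flow, and then transfer to the $H^{-1}$-norm through the supremum representation \eqref{E:DualNormEquiv} combined with a two-sided bound on the equivalent $H^1$-norms that is uniform in the test function. The only (harmless) difference is that the paper anchors its explicit estimate at $t=s$, obtaining a bound of the form $C|t-s|\|v\|^2_{H^1(\Gamma(s))}$, whereas you argue continuity at an arbitrary $t_0$ via uniform convergence of the coefficients; both yield the claimed continuity.
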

\begin{proof}
By the change of variables formula we have 
\begin{equation}\label{H1Cont}
\|\phi^t_{s}v_1\|_{H^1(\Gamma(t))}^2=\mint{\Gamma(s)}{\left(\nabla_\Gamma v_1(D_{\Gamma(s)}\bar\Phi^s_t)^{-1}(D_{\Gamma(s)}\bar\Phi^s_t)^{-T} \nabla_\Gamma v_1+v_1^2\right)J^s_t}{\Gamma(s)}\,,
\end{equation}
which is a continuous function due to the regularity of $\Phi$ stated in Assumption \ref{A:Phi}. Similarly we conclude the continuity of the $L^2$-norm.

Moreover, since $(D_{\Gamma(s)}\bar\Phi^s_s)^{-1}(D_{\Gamma(s)}\bar\Phi^s_s)^{-T}=\textup{id}_{T\Gamma(s)}$, $J^s_s=1$, and $\Phi^s_{(\cdot)}(\cdot)\in C^2(\Gamma(s)\times[0,T])$ Equation  \eqref{H1Cont} infers
$$|\|\phi^t_{s}v\|_{H^1(\Gamma(t))}^2-\|v\|_{H^1(\Gamma(s))}^2|\le C|t-s|\|v\|^2_{H^1(\Gamma(s))}\,,$$
for all $v\in H^1(\Gamma(s))$. Regarding \eqref{E:DualNormEquiv} this allows us to estimate
\[
 \frac{1}{(1+C|s-t|)^{\frac{1}{2}}}\|v_3\|_{H^{-1}(\Gamma(s))}\le \|{\phi^s_{t}}^*v_3\|_{H^{-1}(\Gamma(t))} \le \frac{1}{(1-C|s-t|)^{\frac{1}{2}}}\|v_3\|_{H^{-1}(\Gamma(s))}\,.
\]
\end{proof}
As far as Lemma \ref{L:MatDerSmooth} is concerned, for a family of functions $\{f(t)\}_{t\in[0,T]}$, $f(t):\Gamma(t)\rightarrow \mathbb R$, one can define $\dot f$ at $\gamma=\Phi^0_t\gamma_0$ simply by $\dot f(t)[\gamma] =\phi_0^t\frac{\d}{\d t}(\phi_t^0 f(t))[\gamma_0,t]=\phi_0^t\frac{\d}{\d t}[f(t)(\Phi_t^0\gamma_0)]$. If $\{f(t)\}$ can be smoothly extended,  this is  equivalent to \eqref{E:MatDerStrong}. The following Lemmas aim at defining a weak material derivative of $f$ that translates into a weak derivative of the pull-back $\phi_t^0 f(t)$.

\begin{LemmaAndDef}\label{LD:Spaces}
Consider the disjoint union $\mathcal B_{L^2}=\bigcup_{t\in[0,T]}L^2(\Gamma(t))\times\{t\} $. The set of functions $f:[0,T]\rightarrow \mathcal B_{L^2}$, $t\mapsto (v_t,t)$ inherits a canonical vector space structure from the spaces $L^2(\Gamma(t))$ (addition and  multiplications with scalars). Given Assumption \ref{A:Phi}, for $ s\in[0,T]$ we define
$$
  L^2_{L^2(\Gamma)}:=\set{\bar v:[0,T]\rightarrow \mathcal B_{L^2}, \,t\mapsto (v_t,t)}{ \phi^s_{t}v\in L^2((0,T),L^2(\Gamma(s)))   }\,.
  $$
Abusing notation, now and in the following we identify $\bar v(t)=(v_t,t)\in  L^2_{L^2(\Gamma)}$ with $v(t)=v_t$.  Endowed with the scalar product
  $$
  \langle f,g\rangle_{ L^2_{L^2(\Gamma)}}=\mint{[0,T]}{\langle f(t),g(t)\rangle_{L^2(\Gamma(t))}}{t}\,.
  $$
  $L^2_{L^2(\Gamma)}$  becomes a Hilbert space. 
  
 In the same manner we define the space $L^2_{H^1(\Gamma)}$. For $L^2_{H^{-1}(\Gamma)}$ use ${\phi^t_{s}}^*$ instead of $\phi^s_{t}$. All three spaces do not depend on $s$.
 
 For  $\varphi\in\phi^{(\cdot)}_{s} \mathcal D((0,T),H^1(\Gamma(s)))=\set{\varphi\in L^2_{L^2(\Gamma)}}{ \phi^s_{t} \varphi\in\mathcal D((0,T),H^1(\Gamma(s))}$, it is clear how to interpret $\dot\varphi$, namely $\dot\varphi=\phi^t_s(\phi^s_t\varphi)' \in H^1(\Gamma(t))$. We say that $y\in L^2_{H^{1}(\Gamma)}$ has weak material derivative $\dot y(t)\in L^2_{H^{-1}(\Gamma)}$ iff there holds 
 \begin{equation}\label{E:WeakDerivative}
 \mint{[0,T]}{\langle \dot y,\varphi\rangle_{H^{-1}(\Gamma(t)),H^1(\Gamma(t))}}{t}=-\mint{[0,T]}{\langle y,\dot\varphi\rangle_{L^2(\Gamma(t))}}{t} -\mint{[0,T]}{\mint{\Gamma(t)}{y\varphi \div_{\Gamma} V}{\Gamma(t)}}{t}
 \end{equation}
 for all $\varphi\in\phi^{(\cdot)}_{s} \mathcal D((0,T),H^1(\Gamma(s)))$, and the definition does not depend on $s$.
 \end{LemmaAndDef}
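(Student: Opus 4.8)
The plan is to present each of the three spaces as the image of an ordinary Bochner space under the time-wise pull-back to the fixed surface $\Gamma(s)$, to move both the Hilbert-space structure and the $s$-independence through the algebraic composition law of the pull-backs, and finally to check that in \eqref{E:WeakDerivative} neither the admissible test functions nor the object $\dot\varphi$ genuinely depends on the base point. The workhorse throughout is the cocycle identity $\phi^a_b\phi^b_c=\phi^a_c$ for $a,b,c\in[0,T]$, which follows at once from the flow property $\Phi^a_b\circ\Phi^b_c=\Phi^a_c$ of Assumption~\ref{A:Phi}, together with its dual form $(\phi^a_c)^*=(\phi^b_c)^*(\phi^a_b)^*$ and the special case $\phi^s_s=\textup{id}$.

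For the Hilbert-space claim I would consider the linear map $\Psi_s\colon \bar v\mapsto\big(t\mapsto\phi^s_t v_t\big)$. By the very definition of $L^2_{L^2(\Gamma)}$ it maps into $L^2((0,T),L^2(\Gamma(s)))$, it is bijective with inverse $w\mapsto\big(t\mapsto\phi^t_s w(t)\big)$, and by the uniform equivalence constant $C_{L^2(\Gamma)}$ of Lemma and Definition~\ref{L:basics} both $\Psi_s$ and $\Psi_s^{-1}$ are bounded. Hence $\Psi_s$ is a Banach-space isomorphism onto the complete space $L^2((0,T),L^2(\Gamma(s)))$, so $L^2_{L^2(\Gamma)}$ is complete in the pulled-back norm and therefore also in the equivalent norm induced by $\langle\cdot,\cdot\rangle_{L^2_{L^2(\Gamma)}}$; as the latter is plainly an inner product, the space is Hilbert. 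The arguments for $L^2_{H^1(\Gamma)}$ and $L^2_{H^{-1}(\Gamma)}$ are verbatim, now using $C_{H^1(\Gamma)}$ and the bound on $\|{\phi^s_t}^*\|$.

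Independence of the spaces on $s$ is where the cocycle law first pays off. Given a second base point $s'$, the identity $\phi^{s'}_t=\phi^{s'}_s\phi^s_t$ and the fact that $\phi^{s'}_s$ is a $t$-independent homeomorphism $L^2(\Gamma(s))\to L^2(\Gamma(s'))$ show, upon applying $\phi^{s'}_s$ pointwise in $t$, that $\phi^s_t v\in L^2((0,T),L^2(\Gamma(s)))$ holds iff $\phi^{s'}_t v\in L^2((0,T),L^2(\Gamma(s')))$; the membership criterion, and hence the space, is the same. For $L^2_{H^{-1}(\Gamma)}$ one runs the identical argument with the dual law ${\phi^t_{s'}}^*={\phi^s_{s'}}^*{\phi^t_s}^*$ and the $t$-independent homeomorphism ${\phi^s_{s'}}^*$.

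For the weak material derivative I would observe that the two boundary integrals and the duality pairing in \eqref{E:WeakDerivative} are written entirely in terms of the intrinsic $\Gamma(t)$-products and thus carry no reference to $s$; the base point can therefore only enter through the admissible test set $\phi^{(\cdot)}_s\mathcal D((0,T),H^1(\Gamma(s)))$ and through $\dot\varphi=\phi^t_s(\phi^s_t\varphi)'$. For the former, $\varphi$ lies in the set iff $t\mapsto\phi^s_t\varphi(t)$ is smooth with compact support in $H^1(\Gamma(s))$; composing with the fixed homeomorphism $\phi^{s'}_s$, which preserves both smoothness in $t$ and support, and invoking $\phi^{s'}_t=\phi^{s'}_s\phi^s_t$ shows the set is unchanged when $s$ is replaced by $s'$. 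For the latter, differentiating $\phi^{s'}_t\varphi=\phi^{s'}_s\phi^s_t\varphi$ in $t$ yields $(\phi^{s'}_t\varphi)'=\phi^{s'}_s(\phi^s_t\varphi)'$, and applying $\phi^t_{s'}$ together with $\phi^t_{s'}\phi^{s'}_s=\phi^t_s$ gives $\phi^t_{s'}(\phi^{s'}_t\varphi)'=\phi^t_s(\phi^s_t\varphi)'$, so $\dot\varphi$ is computed identically from either base point. The one step I expect to require genuine care is precisely this commutation $(\phi^{s'}_t\varphi)'=\phi^{s'}_s(\phi^s_t\varphi)'$ of the time derivative with the spatial pull-back: it is legitimate only because $\phi^{s'}_s$ is a bounded linear operator \emph{independent of} $t$, so it passes through the difference quotients that define the $H^1(\Gamma(s'))$-valued derivative. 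Everything else reduces to bookkeeping with the cocycle law and the uniform constants of Lemma and Definition~\ref{L:basics}.
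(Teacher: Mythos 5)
Your overall architecture --- identifying $L^2_{L^2(\Gamma)}$ with the Bochner space $L^2((0,T),L^2(\Gamma(s)))$ via the pull-back, and pushing the $s$-independence of the spaces, of the test class, and of $\dot\varphi$ through the cocycle identity $\phi^{s'}_t=\phi^{s'}_s\phi^s_t$ --- is exactly the paper's, and those parts are sound. Your treatment of the weak material derivative is, if anything, slightly more direct than the paper's (which explicitly rewrites \eqref{E:WeakDerivative} as the pulled-back identity \eqref{E:WeakDerivativePullback} on $\Gamma(s)$ and then transports it to $\Gamma(r)$), but both rest on the same observation that $\phi^r_s$ is a $t$-independent homeomorphism mapping $\mathcal D((0,T),H^1(\Gamma(s)))$ onto $\mathcal D((0,T),H^1(\Gamma(r)))$.

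The genuine gap is at the point you dismiss as ``plainly an inner product.'' The expression $\int_0^T\langle f(t),g(t)\rangle_{L^2(\Gamma(t))}\,\d t$ integrates a function whose value at each $t$ is computed in a \emph{different} Hilbert space, so it is not a priori a measurable function of $t$; until that is established, the scalar product is not well defined and the ``equivalent norm induced by $\langle\cdot,\cdot\rangle_{L^2_{L^2(\Gamma)}}$'' that your completeness argument hands off to has no meaning. This is precisely where the paper spends the bulk of its proof: it reduces via polarization to the measurability of $t\mapsto\|f(t)\|^2_{L^2(\Gamma(t))}$, approximates $\phi^s_t f$ by measurable simple functions $\tilde f_n=\sum_{i} f_{i,n}\mathbf 1_{B_i}$ in $L^2((0,T),L^2(\Gamma(s)))$, invokes Lemma \ref{L:NormContinuity} (continuity of $t\mapsto\|\phi^t_s v\|_{L^2(\Gamma(t))}$ for a \emph{fixed} $v$) to see that each $\|\phi^t_s\tilde f_n\|_{L^2(\Gamma(t))}$ is measurable, and then passes to the pointwise limit using the uniform bounds of Lemma \ref{L:basics}. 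Only after that does the pointwise two-sided estimate between $\|\phi^s_tf\|_{L^2(\Gamma(s))}$ and $\|f(t)\|_{L^2(\Gamma(t))}$ upgrade to the equivalence of the integrated norms. Your proof should either reproduce this measurability step or cite it explicitly; everything downstream of it is correct.
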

\begin{proof}
In order to define the scalar product of $L^2_{L^2(\Gamma)}$, we must ensure measurability of $\langle f,g\rangle_{L^2(\Gamma(t))}:[0,T]\rightarrow\R$. Since $\langle f,g\rangle=\frac{1}{2}(\|f+g\|^2-\|f\|^2-\|g\|^2)$ it suffices to show measurability of $\|f\|^2_{L^2(\Gamma(t))}$ for all $f\in L^2_{L^2(\Gamma)}$. By definition of the set $L^2_{L^2(\Gamma)}$ we have $\phi_t^s f\in L^2([0,T],L^2(\Gamma(s)))$. Hence, there exists a sequence of measurable simple functions $\tilde f_n$ that converge pointwise a.e. to $\phi_t^s f$ in $L^2(\Gamma(s))$. Each $\tilde f_n$ is the finite sum of measurable single-valued functions, i.e. $\tilde f_n=\sum_{i=1}^{M_n} f_{i,n}\mathbf 1_{B_i}$, $M_n\in \mathbb N$, $f_{i,n}\in L^2(\Gamma(s))$, $[0,T]\supset B_i$ measurable and disjoint. By Lemma \ref{L:NormContinuity} the function
$$\|\phi_s^t\tilde f_n\|_{L^2(\Gamma(t))}=\sum_{i=1}^{M_n} \|\phi_s^t f_{i,n}\|_{L^2(\Gamma(t))}  \mathbf 1_{B_i}$$ 
is the finite sum of measurable functions and thus measurable. Using the continuity of the operator $\phi_s^t$, as stated in Lemma \ref{L:basics}, one infers pointwise convergence a.e.  of $\|\phi_s^t\tilde f_n\|_{L^2(\Gamma(t))}$ towards $ \|f\|_{L^2(\Gamma(t))}$ which in turn implies measurability of $ \|f\|_{L^2(\Gamma(t))}$.

Again by Lemma \ref{L:basics} we now conclude integrability of $ \|f\|_{L^2(\Gamma(t))}$  and at the same time equivalence of the norms 
$$
\left(\mint{[0,T]}{\| f\|^2_{L^2(\Gamma(t))}}{t}\right)^\frac{1}{2} \quad\text{and}\quad \left(\mint{[0,T]}{\| \phi^s_t f\|^2_{L^2(\Gamma(s))}}{t}\right)^\frac{1}{2} \,.
$$
Completeness of  $L^2_{L^2(\Gamma)}$ follows, since $ L^2_{L^2(\Gamma)}$ and $L^2((0,T),L^2(\Gamma(s)))$ are isomorph. Again because of Lemma \ref{L:basics}, $\phi^s_{t}v\in L^2((0,T),L^2(\Gamma(s)))$ is equivalent to $\phi^r_{t}v\in L^2((0,T),L^2(\Gamma(r)))$, thus the definition does not depend on the choice of $s$.
For $L^2_{H^1(\Gamma)}$ and $L^2_{H^{-1}(\Gamma)}$ we proceed similarly.

We show that the definition of the weak material derivative does not depend on $s\in[0,T]$. On $\Gamma(s)$ Equation \eqref{E:WeakDerivative} reads
\begin{equation}\label{E:WeakDerivativePullback}
 \mint{[0,T]}{\langle{ \phi_s^t}^* \dot y,\tilde\varphi\rangle_{H^{-1}(\Gamma(s)),H^1(\Gamma(s))}}{t}=-\mint{[0,T]}{\mint{\Gamma(s)}{{\left( \phi_t^sy\tilde\varphi'(t)  +\phi^s_t\left(y \div_{\Gamma(t)} V\right)\tilde\varphi\right)J^s_t}}{\Gamma(s)}}{t}
\end{equation}
for all $\tilde \varphi \in \mathcal D([0,T],H^1(\Gamma(s)))$. For $r\in[0,T]$, we now transform the relation into one on $\Gamma(r)$, using $\phi^r_s$, $(\phi^s_r)^*$ and  $\phi^r_t=\phi^r_s\circ\phi^s_t$
\[
 \mint{[0,T]}{\langle{ \phi_r^t}^* \dot y,\phi_s^r\tilde\varphi\rangle_{H^{-1}(\Gamma(r)),H^1(\Gamma(r))}}{t}=-\mint{[0,T]}{\mint{\Gamma(r)}{{\left( \phi_t^ry\left(\phi_s^r\tilde\varphi (t)\right)' +\phi^r_t\left(y  \div_{\Gamma(t)} V\right)\phi_s^r\tilde\varphi\right)J^r_t}}{\Gamma(r)}}{t},
\]
and because $\phi_s^r:H^1(\Gamma(s))\rightarrow H^1(\Gamma(r))$ is a linear homeomorphism, it also defines an isomorphism between $\mathcal D([0,T],H^1(\Gamma(s)))$ and $\mathcal D([0,T],H^1(\Gamma(r)))$.
\end{proof}
\begin{Remark}
Strictly speaking the elements of $  L^2_{X(\Gamma)}$ are equivalence classes of functions coinciding a.e. in $[0,T]$, just like the elements of $ L^2((0,T),X(\Gamma(s))) $.
\end{Remark}
The definition of the weak derivative of $y\in L^2_{H^1(\Gamma)}$ in \eqref{E:WeakDerivative} translates into weak derivatives of the pullback  $\phi^s_t y$. In order to make the connection between the two, we state the following
\begin{Lemma}\label{L:WJmult}
Let $w\in W_s(0,T)$ and $f\in C^1([0,T]\times\Gamma(s))$. Then $fw$  also lies in $W_s(0,T)$ and
\[
\left(fw\right)'=\underbrace{\partial_tfw}_{\in L^2([0,T],\Ltwo{s})}+fw'\,,
\]
where $fw'$ is to be understood as $\langle fw',\varphi\rangle_{H^{-1}(\Gamma(s)),H^1(\Gamma(s))}=\langle w', f\varphi\rangle_{H^{-1}(\Gamma(s)),H^1(\Gamma(s))}$.
\end{Lemma}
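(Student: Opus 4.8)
The plan is to verify the three assertions in turn: that $fw$ lies in $\W{s}$, that every term in the claimed formula is well defined, and that $\partial_tf\,w+fw'$ really is the weak derivative of $fw$. First I would record the basic multiplier estimate. Since $f\in C^1([0,T]\times\Gamma(s))$ and $\Gamma(s)$ is compact, both $f$ and its surface gradient $\sgrad{}f$ are bounded on $[0,T]\times\Gamma(s)$, so for each fixed $t$ the map $u\mapsto f(t)u$ is an endomorphism of $H^1(\Gamma(s))$ whose operator norm is bounded uniformly in $t$ (via $\sgrad{}(fu)=(\sgrad{}f)u+f\sgrad{}u$). Consequently $fw\in L^2((0,T),\Hone{s})$, and by duality the functional $\varphi\mapsto\langle w'(t),f(t)\varphi\rangle$ defines an element $fw'(t)\in\Hmone{s}$ with $\|fw'(t)\|_{H^{-1}}\le C\|w'(t)\|_{H^{-1}}$, so that $fw'\in L^2((0,T),\Hmone{s})$. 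Likewise $\partial_tf$ is continuous and bounded while $w\in L^2((0,T),\Ltwo{s})$, so $\partial_tf\,w\in L^2((0,T),\Ltwo{s})$; this gives meaning to each summand in the asserted identity.

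The core of the argument is to show that $\partial_tf\,w+fw'$ is the weak derivative of $fw$, i.e.\ that for every test function $\varphi\in\mathcal D((0,T),\Hone{s})$ there holds
\[
\mint{[0,T]}{\Big(\langle fw,\varphi'\rangle_{L^2}+\langle\partial_tf\,w+fw',\varphi\rangle_{H^{-1},H^1}\Big)}{t}=0.
\]
Using that multiplication by the real-valued $f$ is self-adjoint on $L^2(\Gamma(s))$, together with the defining relation $\langle fw',\varphi\rangle=\langle w',f\varphi\rangle$, the integrand collapses to $\langle w,f\varphi'+\partial_tf\,\varphi\rangle_{L^2}+\langle w',f\varphi\rangle_{H^{-1},H^1}$, and by the elementary product rule $f\varphi'+\partial_tf\,\varphi=(f\varphi)'$ this is exactly $\langle w,(f\varphi)'\rangle_{L^2}+\langle w',f\varphi\rangle_{H^{-1},H^1}$. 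Thus the whole statement reduces to the single identity
\[
\mint{[0,T]}{\Big(\langle w,(f\varphi)'\rangle_{L^2}+\langle w',f\varphi\rangle_{H^{-1},H^1}\Big)}{t}=0,
\]
which is precisely the weak-derivative relation \eqref{WeakDer_equiv} for $w$ tested against $\psi=f\varphi$.

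The hard part is that $f\varphi$ is only $C^1$ in time (because $f$ is), so it is not an admissible element of the smooth test space $\mathcal D((0,T),\Hone{s})$ and one cannot simply quote the definition of $w'$. I would resolve this in either of two equivalent ways. Either I observe that $\psi=f\varphi$ nonetheless lies in $\W{s}$: it is in $L^2((0,T),\Hone{s})$ by the multiplier estimate, and a difference-quotient computation gives the strong derivative $\psi'=\partial_tf\,\varphi+f\varphi'\in L^2((0,T),\Ltwo{s})$, so that the integration-by-parts formula of Lemma~\ref{L:WZeroT}[\emph{3.}] applies to the pair $w,\psi\in\W{s}$; since $\varphi$, hence $\psi$, has compact support in $(0,T)$, the boundary terms at $0$ and $T$ vanish and the displayed identity follows. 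Alternatively, I would mollify $f$ in the time variable, $f_\varepsilon=\rho_\varepsilon*_t f$, so that $f_\varepsilon\varphi\in\mathcal D((0,T),\Hone{s})$ is a genuine test function to which \eqref{WeakDer_equiv} applies directly; as $f_\varepsilon\to f$ and $\partial_tf_\varepsilon\to\partial_tf$ uniformly while $w,w'\in L^2$, dominated convergence passes the identity to the limit $\varepsilon\to0$. This product-rule/approximation step is the only place where the restricted time regularity of $f$ must be handled with care; everything else is the bookkeeping of the multiplier bounds established in the first paragraph.
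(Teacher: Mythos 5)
Your proof is correct and follows essentially the same route as the paper: both arguments hinge on showing that $f\varphi\in \W{s}$ with strong derivative $(f\varphi)'=\partial_t f\,\varphi+f\varphi'\in L^2((0,T),\Ltwo{s})$ and then invoking the integration-by-parts formula of Lemma~\ref{L:WZeroT}[\emph{3.}] for the pair $w,f\varphi$, the compact support of $\varphi$ killing the boundary terms. The only divergence is minor: you obtain $fw\in L^2((0,T),\Hone{s})$ and $\partial_t f\,w+fw'\in L^2((0,T),\Hmone{s})$ directly from the uniform multiplier bounds on $f$, $\nabla_{\Gamma(s)}f$ and $\partial_t f$, whereas the paper deduces these memberships at the end via the density statement of Lemma~\ref{L:WZeroT}[\emph{2.}]; your route is the more direct of the two.
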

\begin{proof}
We show that for $\varphi\in\D{s}$ the function $f\varphi$ lies in $W_s(0,T)$. The claim then follows by partial integration in $\W{s}$.

\emph{1.} Because $f\in C([0,T]\times \Gamma(s))$ and the strong surface gradient  $\nabla_{\Gamma(s)} f\in \left(C([0,T]\times \Gamma(s))\right)^{n+1}$ are continuous and thus uniformly continuous on the compact set $[0,T]\times \Gamma(s)$, we infer $f \in C([0,T],C^1(\Gamma(s)))$. Note that $\textup{dist}_{[0,T]\times\Gamma(s)}((t,\gamma),(t+k,\gamma))=k$. Let $\epsilon>0$, then for sufficiently small $k_\epsilon>k>0$ one has
\[
\|f(t+k,\cdot)-f(t,\cdot)\|_\infty+\sum_{i=1}^{n+1}\|\nabla_{\Gamma(s)}^i f(t+k,\cdot)-\nabla_{\Gamma(s)}^i f(t,\cdot)\|_\infty\le \epsilon\,.
\]

\emph{2.} As to the distributional derivative of $f\varphi$, we show that $f\in C^1([0,T],C(\Gamma(s)))$.
Observe that the uniform continuity of the strong derivative $\partial_tf$ on $[0,T]\times\Gamma(s)$ allows us to estimate
\[
\|f(t+k,\cdot)-f(t,\cdot)-\partial_t f(t,\cdot)k\|_\infty=\|k\mint{[0,1]}{\partial_tf(t+\tau k,\cdot)-\partial_t f(t,\cdot)}{\tau}\|_\infty\le k\epsilon
\]
for $k_\epsilon>k>0$ sufficiently small. Again by uniform continuity of $\partial_tf$ we conclude $\partial_tf\in C([0,T],C(\Gamma(s)))$.
All told, taking into account the continuity of the pointwise multiplication 
between the respective spaces, we showed  
\[
f\varphi \in C([0,T],H^1(\Gamma(s)))\cap C^1([0,T],L^2(\Gamma(s)))\subset W_s(0,T)\,.
\]

\emph{3.} Consider now an arbitrary $w\in\W{s}$. Since $f\varphi\in\W{s}$, by partial integration as in Lemma \ref{L:WZeroT}[\emph{3.}] it follows
\[
\begin{split}
\mint{[0,T]}{\langle w',f\varphi\rangle_{\Hmone{s},\Hone{s}} }{t}=&-\mint{[0,T]}{\langle w,\left(f\varphi\right)'\rangle_{\Hone{s},\Hmone{s}} }{t}\\
=&-\mint{[0,T]}{\langle w,\partial_t f\varphi\rangle_{\Ltwo{s}} }{t}- \mint{[0,T]}{\langle w,f\varphi'\rangle_{\Ltwo{s}} }{t}\,.
\end{split}
\]
Reordering gives
\[
\mint{[0,T]}{\langle fw,\varphi'\rangle_{\Ltwo{s}} }{t}=-\mint{[0,T]}{\langle \partial_tfw+fw',\varphi\rangle_{\Hmone{s},\Hone{s}} }{t}\,
\]
for any $\varphi\in\D{s}$. Hence condition \eqref{WeakDer_equiv} holds for $fw$. Using the density property stated in Lemma \ref{L:WZeroT}[\emph{2.}], we can approximate $fw$ by continuous $H^1(\Gamma(s))$-valued functions and infer $fw\in L^2((0,T),H^1(\Gamma(s)))$. The same argument yields $\partial_t fw+f w'\in L^2((0,T),H^{-1}(\Gamma(s)))$.
\end{proof}
Finally we can define our solution space.
\begin{LemmaAndDef}\label{L:W}
The solution space $W_\Gamma$ is defined as follows
$$
W_\Gamma = \set{v\in L^2_{H^1(\Gamma)}}{\dot v\in L^2_{H^{-1}(\Gamma)}}\,.
$$ 
$W_\Gamma$ is Hilbert with the canonical scalar product $\int_0^T \langle\cdot,\cdot\rangle_{H^1(\Gamma(t))}+\langle\dot{(\cdot)},\dot{(\cdot)}\rangle_{H^{-1}(\Gamma(t))}\d t$. Also $y\in W_\Gamma$ iff $\phi^s_t y\in\W{s}$ for (every) $s\in[0,T]$. For all $\tilde \varphi \in \mathcal D((0,T),H^1(\Gamma(s)))$ there holds
\begin{equation}\label{E:DerPullback}
\mint{[0,T]}{\langle{ \phi_s^t}^* \dot y,\tilde\varphi\rangle_{H^{-1}(\Gamma(s)),H^1(\Gamma(s))}}{t}=\mint{[0,T]}{\langle( \left(\phi^s_t y\right)',J^s_t\tilde\varphi\rangle_{H^{-1}(\Gamma(s)),H^1(\Gamma(s))}}{t}\,.
\end{equation}
One has
\[
c_W\|\phi^s_t y\|_\W{s}\le\|y\|_{W_\Gamma}\le C_W\|\phi^s_t y\|_\W{s}\,,
\]
and $c_W,C_W>0$ do not depend on $s\in[0,T]$.
\end{LemmaAndDef}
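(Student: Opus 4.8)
The plan is to transport the whole problem to the fixed reference space $\W{s}$ via the homeomorphism $\phi^s_t$ and to reduce the four assertions to the classical facts collected in Lemma \ref{L:WZeroT}. Everything hinges on the identity \eqref{E:DerPullback}, which identifies the pulled-back material derivative $(\phi_s^t)^*\dot y$ with $(\phi^s_t y)'$ up to the Jacobian weight $J^s_t$; once it is in place, the equivalence $y\in W_\Gamma$ iff $\phi^s_t y\in\W{s}$, the norm equivalence and completeness follow almost formally. Throughout I write $u:=\phi^s_t y$.

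First I would rewrite the defining relation \eqref{E:WeakDerivative} in pulled-back form. Testing \eqref{E:WeakDerivative} with $\varphi=\phi_s^t\tilde\varphi$, $\tilde\varphi\in\mathcal D((0,T),H^1(\Gamma(s)))$, and carrying the change-of-variables factor $J^s_t$ through gives \eqref{E:WeakDerivativePullback}. The decisive step is to absorb the lower-order term: by Lemma \ref{L:basics} one has $\partial_t J^s_t=\phi^s_t(\div_{\Gamma}V)\,J^s_t$, hence $\phi^s_t(y\,\div_{\Gamma}V)\,J^s_t=(\partial_t J^s_t)\,u$, and with the elementary product rule $J^s_t\tilde\varphi'+(\partial_t J^s_t)\tilde\varphi=(J^s_t\tilde\varphi)'$ the right-hand side of \eqref{E:WeakDerivativePullback} collapses to $-\int_0^T\langle u,(J^s_t\tilde\varphi)'\rangle_{L^2(\Gamma(s))}\d t$. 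Expanding $(J^s_t\tilde\varphi)'$ once more and regrouping, the resulting identity reads exactly as condition \eqref{WeakDer_equiv} for the function $J^s_t u$, namely $\int_0^T\langle J^s_t u,\tilde\varphi'\rangle_{L^2(\Gamma(s))}+\langle(\phi_s^t)^*\dot y+(\partial_t J^s_t)u,\tilde\varphi\rangle_{H^{-1},H^1}\d t=0$. The point of routing the argument through $J^s_t u$ rather than $u$ is that the test functions $\tilde\varphi$ then stay smooth, so no inversion of $J^s_t$ inside $\mathcal D$ is ever needed.

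This reformulation makes the equivalence bidirectional. If $y\in W_\Gamma$, then $(\phi_s^t)^*\dot y\in L^2((0,T),H^{-1}(\Gamma(s)))$ by definition of $L^2_{H^{-1}(\Gamma)}$, so the displayed relation certifies $J^s_t u\in\W{s}$; since $(J^s_t)^{-1}\in C^1([0,T]\times\Gamma(s))$ (because $J^s_t\ge 1/C_J$), Lemma \ref{L:WJmult} applied to $u=(J^s_t)^{-1}(J^s_t u)$ yields $u\in\W{s}$. Conversely, if $u\in\W{s}$, then $J^s_t u\in\W{s}$ by Lemma \ref{L:WJmult}, and I would set $\dot y:=(\phi^s_t)^*(J^s_t u')$, check that $\dot y\in L^2_{H^{-1}(\Gamma)}$ (as $(\phi_s^t)^*\dot y=J^s_t u'$ is a bounded multiple of $u'$), and read the same chain of equalities backwards to recover \eqref{E:WeakDerivative}, so $y\in W_\Gamma$. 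Comparing $(J^s_t u)'=(\partial_t J^s_t)u+J^s_t u'$ from Lemma \ref{L:WJmult} with the weak derivative identified above gives $(\phi_s^t)^*\dot y=J^s_t u'$, which is precisely \eqref{E:DerPullback}; the independence of $s$ is inherited from that of the spaces $L^2_{H^1(\Gamma)}$, $L^2_{H^{-1}(\Gamma)}$ and of $\dot y$ established in Lemma \ref{LD:Spaces}.

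For the last two claims I would integrate pointwise estimates in $t$. Lemma \ref{L:basics} gives $\|u(t)\|_{H^1(\Gamma(s))}\simeq\|y(t)\|_{H^1(\Gamma(t))}$ with constants uniform in $s,t$, and from $(\phi_s^t)^*\dot y=J^s_t u'$, together with the uniform operator bound on $(\phi_s^t)^*$ and the fact that multiplication by $J^s_t$ and by $(J^s_t)^{-1}$ is bounded on $H^{-1}(\Gamma(s))$ with constants controlled by the uniform $C^1$-bounds on $J^s_t$ from Assumption \ref{A:Phi}, one obtains $\|u'(t)\|_{H^{-1}(\Gamma(s))}\simeq\|\dot y(t)\|_{H^{-1}(\Gamma(t))}$, again uniformly; integrating yields the norm equivalence with $c_W,C_W$ independent of $s$. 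Finally $y\mapsto\phi^s_t y$ is a linear bijection $W_\Gamma\to\W{s}$, a homeomorphism by this equivalence, so completeness of $\W{s}$ (Lemma \ref{L:WZeroT}) transfers to $W_\Gamma$, while well-definedness of the canonical scalar product is already guaranteed by the measurability and integrability shown for $L^2_{H^1(\Gamma)}$ and $L^2_{H^{-1}(\Gamma)}$ in Lemma \ref{LD:Spaces}. I expect the main obstacle to be the Jacobian bookkeeping of the second paragraph — in particular the realization that the divergence term is exactly what turns the bare pullback into the genuine weak derivative of $J^s_t u$, and the decision to pass through $J^s_t u$ so that Lemma \ref{L:WJmult} can strip off the weight without ever differentiating a merely $C^1$ test function.
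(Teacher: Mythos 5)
Your proposal is correct and follows essentially the same route as the paper: pull back to $\Gamma(s)$, use $\partial_t J^s_t=\phi^s_t(\div_{\Gamma}V)J^s_t$ to recognize the divergence term as the derivative of the Jacobian so that $J^s_t\,\phi^s_t y\in\W{s}$, strip the weight with Lemma \ref{L:WJmult}, and transfer norm equivalence and completeness from $\W{s}$. The only cosmetic difference is that you obtain \eqref{E:DerPullback} by comparing the two expressions for $(J^s_t\,\phi^s_t y)'$ (product rule versus the identified weak derivative), whereas the paper integrates by parts against $J^s_t\tilde\varphi$ — these are equivalent manipulations.
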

\begin{proof}
For  $y\in W_\Gamma$, observe that $J^s_t \phi_t^sy\in L^2([0,T],H^1(\Gamma(s))$ and rewrite \eqref{E:WeakDerivativePullback} as
\begin{equation}\label{E:PullbackRevisited}
\mint{[0,T]}{\left\langle J^s_t \phi_t^sy,\partial_t\tilde\varphi\right\rangle_{L^2(\Gamma(s))}}{t}=- \mint{[0,T]}{\langle{ \phi_s^t}^* \dot y,\tilde\varphi\rangle_{H^{-1}(\Gamma(s)),H^1(\Gamma(s))}}{t}  -\mint{[0,T]}{\left\langle\partial_t J^s_t\phi_t^sy,\tilde\varphi\right\rangle_{L^2(\Gamma(s))}}{t}\,,
\end{equation}
for $\tilde\varphi\in  \mathcal D((0,T),H^1(\Gamma(s)))$. Hence $ J^s_t \phi_t^sy\in W_s(0,T)$, and  from Lemma \ref{L:WJmult} it follows that also $ \phi_t^sy\in W_s(0,T)$, because $\frac{1}{J^s_t}\in C^1([0,T]\times\Gamma(s))$. Note that we used $\partial_t J_t^s = \phi_t^s( \div_{\Gamma(t)} V) J_t^s$, see Lemma \ref{L:basics}. On the other hand, for any $\tilde y\in \W{s}$ one has $J^s_t \tilde y\in \W{s}$ and thus $y=\phi_s^t\tilde y\in W_\Gamma$. Hence $\phi_{(\cdot)}^s$ constitutes an isomorphism between $W_\Gamma$ and $\W{s}$.

Apply Lemma \ref{L:WJmult}  a second time to obtain $ \left(J^s_t\tilde\varphi\right)'=\partial_tJ^s_t\tilde\varphi+J^s_t\tilde\varphi'$ and because of $\tilde\varphi(0)=\tilde\varphi(T)=0\in H^1(\Gamma(s))$ by partial integration there follows from \eqref{E:PullbackRevisited}
\[
\mint{[0,T]}{\langle{ \phi_s^t}^* \dot y,\tilde\varphi\rangle_{H^{-1}(\Gamma(s)),H^1(\Gamma(s))}}{t}=\mint{[0,T]}{\langle( \left(\phi^s_t y\right)',J^s_t\tilde\varphi\rangle_{H^{-1}(\Gamma(s)),H^1(\Gamma(s))}}{t}\,,
\]
compare Lemma \ref{L:WZeroT}[\emph 3.].
This proves the second claim.

The claim of $W_\Gamma$ being Hilbert now follows. Observe that point-wise multiplication with $J^s_t$ constitutes a linear homeomorphism in $H^1(\Gamma({s}))$ whose inverse is the multiplication by $\frac{1}{J^s_t}$. One easily checks $\|J^s_t \varphi\|_{H^1(\Gamma(s))}\le c\|J^s_t\|_{C^1(\Gamma(s))}\|\varphi\|_{H^1(\Gamma(s))}\le C\|\varphi\|_{H^1(\Gamma(s))}$. This together with Lemma \ref{L:basics} yields the equivalence of the two norms on $W_\Gamma$
\[
\mint{[0,T]}{\|y\|_{H^1(\Gamma(t))}^2+\|\dot y\|_{H^{-1}(\Gamma(t))}^2}{t}\quad\textup{and}\quad \mint{[0,T]}{\|\phi^s_t y\|_{H^1(\Gamma(s))}^2+\|(\phi^s_t y)'\|_{H^{-1}(\Gamma(s))}^2}{t}\,.
\]
Completeness of $\W{s}$ then implies completeness of $W_\Gamma$.
\end{proof}
\begin{Remark}
Formula \eqref{E:DerPullback} can be seen as a generalization of the following relation. Assume $\phi^s_t y\in \mathcal D((0,T),H^1(\Gamma(s)))$. Then
\begin{equation*}
\mint{[0,T]}{\langle{ \phi_s^t}^* \dot y,\tilde\varphi\rangle_{H^{-1}(\Gamma(s)),H^1(\Gamma(s))}}{t}=\mint{[0,T]}{\langle\dot y,{ \phi_s^t} \tilde\varphi\rangle_{L^2(\Gamma(t))}}{t}\\
=\mint{[0,T]}{\langle \left(\phi^s_t y\right)',J^s_t\tilde\varphi\rangle_{L^2(\Gamma(s))}}{t}\,.
\end{equation*}
\end{Remark}
Using Lemma \ref{L:WJmult} and \ref{L:WZeroT}, it is now easy to proof
\begin{Lemma}
For two functions $v,w\in W_\Gamma$ the expression $\langle v(t),w(t)\rangle_{L^2(\Gamma(t))}$ is absolutely continuous with respect to $t\in [0,T]$ and
\[\begin{split}
\frac{1}{\d t}\mint{\Gamma(t)}{vw}{\Gamma(t)}=&\langle \dot v,w\rangle_{H^{-1}(\Gamma(t)),H^1(\Gamma(t))}+\dots\\
&\langle v,\dot w\rangle_{H^{1}(\Gamma(t)),H^{-1}(\Gamma(t))}+\mint{\Gamma(t)}{vw\div_{\Gamma(t)}V}{\Gamma(t)}\,,
\end{split}
\]
a.e. in $(0,T)$, and there holds the formula of partial integration
\[\begin{split}
\mint{[s,t]}{\langle \dot v,w\rangle_{H^{-1}(\Gamma(\tau)),H^1(\Gamma(\tau))}}{\tau} =& \langle v,w\rangle_{L^2(\Gamma(t))}- \langle v,w\rangle_{L^2(\Gamma(s))}\dots\\
& -\mint{[s,t]}{\Big[\langle v,\dot w\rangle_{H^{1}(\Gamma(\tau)),H^{-1}(\Gamma(\tau))}+\mint{\Gamma(\tau)}{vw\div_{\Gamma}V}{\Gamma(\tau)}\Big]}{\tau}\,.
\end{split}\]
\end{Lemma}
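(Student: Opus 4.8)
The plan is to reduce everything to the fixed-domain formulas of Lemma~\ref{L:WZeroT}[\emph{3.}] by pulling back to a reference surface $\Gamma(r)$, $r\in[0,T]$. Writing $\tilde v(t)=\phi^r_t v$, $\tilde w(t)=\phi^r_t w$ and $J=J^r_t$, the isomorphism of Lemma and Definition~\ref{L:W} guarantees $\tilde v,\tilde w\in\W{r}$, while the change-of-variables formula of Lemma and Definition~\ref{L:basics} gives
\[
\langle v(t),w(t)\rangle_{\Ltwo{t}}=\langle \tilde v(t),J(t)\tilde w(t)\rangle_{\Ltwo{r}}\,.
\]
Since $J\in C^1([0,T]\times\Gamma(r))$, Lemma~\ref{L:WJmult} shows $J\tilde w\in\W{r}$ with $(J\tilde w)'=\partial_t J\,\tilde w+J\tilde w'$. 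Applying Lemma~\ref{L:WZeroT}[\emph{3.}] to the pair $\tilde v, J\tilde w\in\W{r}$ then already yields the absolute continuity of $t\mapsto\langle v(t),w(t)\rangle_{\Ltwo{t}}$ together with the a.e.\ identity
\[
\frac{\d}{\d t}\langle \tilde v,J\tilde w\rangle_{\Ltwo{r}}=\langle \tilde v',J\tilde w\rangle_{\Hmone{r},\Hone{r}}+\langle \tilde v,\partial_t J\,\tilde w\rangle_{\Ltwo{r}}+\langle \tilde v,J\tilde w'\rangle_{\Hone{r},\Hmone{r}}\,.
\]

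It then remains to translate the three terms back onto $\Gamma(t)$. Starting from the integral identity \eqref{E:DerPullback}, which holds for all $\tilde\varphi\in\D{r}$, and using the convention $\langle(\phi^r_t v)',J\tilde\varphi\rangle=\langle J(\phi^r_t v)',\tilde\varphi\rangle$ from Lemma~\ref{L:WJmult}, I would invoke the fundamental lemma of the calculus of variations to upgrade \eqref{E:DerPullback} to the pointwise identity ${\phi^t_r}^*\dot v=J\tilde v'$ in $L^2((0,T),\Hmone{r})$, and likewise for $w$. Combined with the pointwise adjoint relation $\langle{\phi^t_r}^*g,h\rangle_{\Hmone{r},\Hone{r}}=\langle g,\phi^t_r h\rangle_{\Hmone{t},\Hone{t}}$ (taking $\phi^t_r h=w(t)$, resp.\ $v(t)$), this gives
\[
\langle \tilde v',J\tilde w\rangle_{\Hmone{r},\Hone{r}}=\langle \dot v,w\rangle_{\Hmone{t},\Hone{t}}\,,\qquad \langle \tilde v,J\tilde w'\rangle_{\Hone{r},\Hmone{r}}=\langle v,\dot w\rangle_{\Hone{t},\Hmone{t}}\,.
\]
For the middle term I would use $\partial_t J=\phi^r_t(\div_{\Gamma}V)\,J$ from Lemma and Definition~\ref{L:basics} and transform back once more, so that $\langle \tilde v,\partial_t J\,\tilde w\rangle_{\Ltwo{r}}=\mint{\Gamma(t)}{vw\,\div_{\Gamma}V}{\Gamma(t)}$. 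Assembling the three contributions proves the product formula a.e.\ in $(0,T)$.

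The integration-by-parts formula then follows by integrating the product formula over $[s,t]$ and applying the fundamental theorem of calculus, which is legitimate precisely because $t\mapsto\langle v(t),w(t)\rangle_{\Ltwo{t}}$ has just been shown to be absolutely continuous.

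The one genuinely delicate point is the second paragraph. Identity \eqref{E:DerPullback} relates ${\phi^t_r}^*\dot v$ to $J(\phi^r_t v)'$ only in integrated form, so it must first be sharpened to an a.e.\ identity in $\Hmone{r}$ before the pointwise product rule can be used term by term; the book-keeping of adjoints and of the $J$-multiplication of Lemma~\ref{L:WJmult} has to be carried out with care so that the extra term $\mint{\Gamma(t)}{vw\,\div_{\Gamma}V}{\Gamma(t)}$ emerges exactly as the contribution of $\partial_t J$. Everything else is a routine transport of the standard $\W{r}$-theory through the isomorphism $\phi^r_{(\cdot)}$.
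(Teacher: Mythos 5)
Your argument is correct and follows exactly the route the paper indicates (it states the lemma with only the remark that it follows ``using Lemma \ref{L:WJmult} and \ref{L:WZeroT}''): pull back to a fixed surface, apply Lemma \ref{L:WZeroT}[\emph{3.}] to the pair $\phi^r_t v$ and $J^r_t\,\phi^r_t w$, use Lemma \ref{L:WJmult} for the product rule on $J^r_t\,\phi^r_t w$, and transport the three terms back via \eqref{E:DerPullback} and $\partial_t J^r_t=\phi^r_t(\div_{\Gamma}V)J^r_t$. Your identification of the one delicate step, namely upgrading \eqref{E:DerPullback} to an a.e.\ identity in $H^{-1}(\Gamma(r))$ before applying the pointwise product rule, is exactly the right care to take, and the localization argument (testing with $\psi\varphi_0$, $\psi\in\mathcal D((0,T))$, $\varphi_0\in H^1(\Gamma(r))$) makes it rigorous.
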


We can now formulate \eqref{E:Transport} in a weak and slightly generalized manner. Let $\tilde b\in C^1([0,T]\times\Gamma_0)$ and $b=\phi_0^t\tilde b$. We look for solutions $u\in W_\Gamma$ that satisfy $y(0)=y_0\in L^2(\Gamma_0)$ and for $f\in L^2_{H^{-1}(\Gamma)}$
\begin{equation}\label{E:WeakTransport}
\begin{split}
\frac{\textup{d}}{\textup{d}t}\mint{\Gamma(t)}{y\,\varphi}{\Gamma(t)}+\mint{\Gamma(t)}{\nabla_\Gamma y\cdot\nabla_\Gamma\varphi+by\varphi}{\Gamma(t)}= \langle\dot\varphi,y\rangle_{H^{-1}({\Gamma(t)}),H^1({\Gamma(t)})}\dots\\+\langle f,\varphi\rangle_{H^{-1}({\Gamma(t)}),H^1({\Gamma(t)})}\,,
\end{split}\end{equation}
for all $\varphi\in W_\Gamma$ and a.e. $t\in (0,T)$. One may equivalently write \eqref{E:WeakTransport} as
\begin{equation*}
\dot y+\Delta_{\Gamma(t)}y + y\left(\div_{\Gamma(t)}V+b\right)= f\quad\textup{in }H^{-1}(\Gamma(t))
\end{equation*}
for a.e. $t\in(0,T)$. 
We  apply known existence and uniqueness results for the pulled-back equation to prove 

\begin{Theorem}\label{T:UniqueSol}
Let $f\in L^2_{H^{-1}(\Gamma)}$, $y_0\in L^2(\Gamma_0)$. There exists a unique $y\in W_\Gamma$, such that \eqref{E:WeakTransport} is fulfilled for all $\phi\in W_\Gamma$ and a.e. $t\in(0,T)$. There holds
\[
\|y\|_{W_\Gamma}\le C\left(\|y_0\|_{L^2(\Gamma_0)}+\|f\|_{L^2_{H^{-1}(\Gamma)}}\right)\,.
\]
\end{Theorem}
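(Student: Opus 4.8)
The plan is to pull the whole problem back onto the fixed reference surface $\Gamma_0=\Gamma(0)$ and to reduce it to the classical existence and uniqueness theory for linear parabolic equations in a Gelfand triple, as developed in \cite{LionsMagenes1968} and \cite{Lions1971}. By Lemma~\ref{L:W} the map $\phi^0_{(\cdot)}$ is an isomorphism between $W_\Gamma$ and $\W{0}$, and the corresponding norms are equivalent with constants independent of the base point. Hence it suffices to produce a unique $\hat y:=\phi^0_t y\in\W{0}$ solving the transformed equation with $\hat y(0)=y_0$, and to recover $y=\phi^t_0\hat y$ afterwards.

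First I would bring \eqref{E:WeakTransport} into pointwise form. Inserting the product rule from the unlabelled Lemma preceding the theorem, the two occurrences of the $L^2$-pairing of $y$ with $\dot\varphi$ cancel, and \eqref{E:WeakTransport} reduces to
\[
\langle\dot y,\varphi\rangle_{\Hmone{t},\Hone{t}}+\mint{\Gamma(t)}{\nabla_\Gamma y\cdot\nabla_\Gamma\varphi+y\varphi(\div_{\Gamma}V+b)}{\Gamma(t)}=\langle f,\varphi\rangle_{\Hmone{t},\Hone{t}}
\]
for a.e.\ $t$ and all $\varphi\in W_\Gamma$, the equivalence of this pointwise formulation with the integrated one being part of the $\W{0}$-theory recalled in Lemma~\ref{L:WZeroT}. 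Writing $\hat y=\phi^0_t y$, $\tilde\varphi=\phi^0_t\varphi$ and $\hat f={\phi^t_0}^*f$, the change-of-variables formula as in \eqref{H1Cont} turns the Dirichlet term into $\int_{\Gamma_0}(\nabla_{\Gamma_0}\hat y)^T A(t)\,\nabla_{\Gamma_0}\tilde\varphi\,J^0_t$ with $A(t)=(D_{\Gamma(0)}\bar\Phi^0_t)^{-1}(D_{\Gamma(0)}\bar\Phi^0_t)^{-T}$, the zeroth-order and right-hand-side terms pick up the weight $J^0_t$, and the material-derivative term is identified through \eqref{E:DerPullback} and the adjoint relation $\langle\dot y,\phi^t_0\tilde\varphi\rangle_{\Ltwo{t}}=\langle{\phi^t_0}^*\dot y,\tilde\varphi\rangle_{\Ltwo{0}}$. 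The outcome is a standard parabolic problem on the fixed surface in the triple $H^1(\Gamma_0)\subset L^2(\Gamma_0)\subset H^{-1}(\Gamma_0)$; the weight $J^0_t$, which by Lemma~\ref{L:basics} is $C^1$ in time and bounded above and below uniformly, enters the $L^2$-pivot and the lower-order coefficients but does not alter the structure.

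Next I would verify the hypotheses of the abstract theorem for the resulting bilinear form $a(t;\cdot,\cdot)$ on $H^1(\Gamma_0)$. Boundedness, $|a(t;u,v)|\le C\|u\|_{H^1(\Gamma_0)}\|v\|_{H^1(\Gamma_0)}$ uniformly in $t$, follows from Lemma~\ref{L:basics} and the smoothness of $\bar\Phi$. The bounds on $J^0_t$ together with Assumption~\ref{A:Phi} make $A(t)$ uniformly positive definite, so the principal part satisfies a Gårding inequality $a(t;v,v)\ge\alpha\|v\|_{H^1(\Gamma_0)}^2-\lambda\|v\|_{L^2(\Gamma_0)}^2$ with $\alpha>0$ and $\lambda\ge0$ independent of $t$; the bounded lower-order terms coming from $\div_{\Gamma}V$ and $b$ are absorbed into $\lambda$. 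Continuity of $t\mapsto a(t;u,v)$ is immediate from Assumption~\ref{A:Phi}, which gives the required measurability. Since $\hat y(0)=y_0\in L^2(\Gamma_0)$ and $\hat f\in L^2((0,T),H^{-1}(\Gamma_0))$, the theory of \cite{LionsMagenes1968} and \cite{Lions1971} then yields a unique $\hat y\in\W{0}$; the constant $\lambda$ is either handled directly by that theory or removed beforehand by the substitution $\hat y=e^{\lambda t}z$, which replaces $a$ by the coercive form $a+\lambda\langle\cdot,\cdot\rangle_{L^2(\Gamma_0)}$.

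Finally, the a priori estimate supplied by the abstract theorem, $\|\hat y\|_{\W{0}}\le C(\|y_0\|_{L^2(\Gamma_0)}+\|\hat f\|_{L^2((0,T),H^{-1}(\Gamma_0))})$, transfers to $W_\Gamma$ through the norm equivalence $c_W\|\hat y\|_{\W{0}}\le\|y\|_{W_\Gamma}\le C_W\|\hat y\|_{\W{0}}$ of Lemma~\ref{L:W} and the operator bound $\|\hat f\|_{L^2((0,T),H^{-1}(\Gamma_0))}\le C_{H^1(\Gamma)}\|f\|_{L^2_{H^{-1}(\Gamma)}}$ of Lemma~\ref{L:basics}, giving the stated bound; uniqueness on $\Gamma$ follows from uniqueness on $\Gamma_0$ since $\phi^0_{(\cdot)}$ is an isomorphism. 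I expect the main obstacle to be the bookkeeping of the pullback rather than the existence argument itself: one must use \eqref{E:DerPullback} to bring the weak material derivative into exactly the form required by the cited theorem, keep track of the weight $J^0_t$, and check the Gårding inequality with constants uniform in $t$. Once the problem is in standard form on the fixed surface, existence, uniqueness and the estimate are classical.
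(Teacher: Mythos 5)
Your proposal is correct and follows essentially the same route as the paper: pull the equation back to a fixed reference surface, identify the weak material derivative via \eqref{E:DerPullback}, verify a uniform G{\aa}rding inequality for the transformed bilinear form, invoke the classical theory of \cite{Lions1971}/\cite{LionsMagenes1968}, and transfer the estimate back through the norm equivalences of Lemmas \ref{L:basics} and \ref{L:W}. The only cosmetic difference is the treatment of the Jacobian weight: the paper absorbs $J^s_t$ into the test function via the substitution $\psi=J^s_t\tilde\varphi$ so as to land on an unweighted pivot space, whereas you keep it in the pivot and lower-order coefficients — both are valid bookkeeping choices.
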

\begin{proof}
Let us relate equation \eqref{E:WeakTransport} to the fixed domain $\Gamma(s)$ via
\[\begin{split}
\frac{\textup{d}}{\textup{d}t}\mint{\Gamma(s)}{\tilde y\,\tilde\varphi J^s_t}{\Gamma(s)}+\mint{\Gamma(s)}{\left(\sgrad{s} \tilde y (D_{\Gamma(s)}\bar\Phi^s_t)^{-1}(D_{\Gamma(s)}\bar\Phi^s_t)^{-T}\sgrad{s}\tilde\varphi +\tilde b \tilde y\tilde\varphi \right)J^s_t}{\Gamma(s)}\dots\\
=\langle{\tilde{\varphi}}',J^s_t\tilde y\rangle_{H^{-1}({\Gamma(s)}),H^1({\Gamma(s)})}+\langle\tilde f,J_t^s\tilde\varphi \rangle_{H^{-1}({\Gamma(s)}),H^1({\Gamma(s)})}\,,
\end{split}\]
with $\tilde y= \phi^s_t y$, $\tilde f=\frac{1}{J_t^s}{\phi^t_s}^*f\in L^2((0,T),H^{-1}(\Gamma(s))$ and for all $\phi^s_t\varphi=\tilde\varphi \in\W{s}$. This again is equivalent to
\[
\begin{split}
\langle \tilde y',\tilde\varphi J^s_t\rangle_{H^{-1}(\Gamma(s)),H^1(\Gamma(s))}&+\mint{\Gamma(s)}{\tilde y\,\tilde\varphi\left( \phi^s_t(\textup{div}_{\Gamma(t)} V)+\tilde b\right)J^s_t}{\Gamma(s)}+\dots\\ 
+\mint{\Gamma(s)}{\sgrad{s}& \tilde y (D_{\Gamma(s)}\bar\Phi^s_t)^{-1}(D_{\Gamma(s)}\bar\Phi^s_t)^{-T}\sgrad{s}\tilde\varphi J^s_t}{\Gamma(s)}
=\langle\tilde f,J_t^s\tilde\varphi \rangle_{H^{-1}({\Gamma(s)}),H^1({\Gamma(s)})}\,.
\end{split}
\]
With $\psi=J_t^s\tilde\varphi$ one gets for all $\psi\in\W{s}$
\begin{equation}\label{E:PulledbackEqn}
\langle \tilde y',\psi\rangle_{H^{-1}(\Gamma(s)),H^1(\Gamma(s))}+a(t,\tilde y, \psi)=\langle\tilde f,\psi \rangle_{H^{-1}({\Gamma(s)}),H^1({\Gamma(s)})}\,,
\end{equation}
with a bilinear form
\[
\begin{split}
a(t,\tilde y, \psi) =& \mint{\Gamma(s)}{\sgrad{s} \tilde y (D_{\Gamma(s)}\bar\Phi^s_t)^{-1}(D_{\Gamma(s)}\bar\Phi^s_t)^{-T}\sgrad{s}\psi}{\Gamma(s)}+\mint{\Gamma(s)}{\tilde y\left(\phi^s_t(\textup{div}_{\Gamma(t)} V)+\tilde b\right)\psi}{\Gamma(s)}\dots\\
&-\mint{\Gamma(s)}{\sgrad{s} \tilde y (D_{\Gamma(s)}\bar\Phi^s_t)^{-1}(D_{\Gamma(s)}\bar\Phi^s_t)^{-T}\sgrad{s}J^s_t\frac{\psi}{J^s_t}}{\Gamma(s)}\,.
\end{split}
\]
By Assumption \ref{A:Phi} the bilinear form $(D_{\Gamma(s)}\bar\Phi^s_t)^{-1}[\gamma](D_{\Gamma(s)}\bar\Phi^s_t)^{-T}[\gamma]$ is positive definite on the tangential space $T_\gamma\Gamma(s)$ uniformly in $s,t\in[0,T]$ and $\gamma\in\Gamma(s)$. Thus, there exists  $c>0$ such that for some $k_0\ge0$ one has $a(t,\psi,\psi)+k_0\|\psi\|_{L^2(\Gamma(s))}\ge c\|\psi\|_{H^1(\Gamma(s))}$.
We are now in the situation to apply for example \cite[Ch. III, Thrm. 1.2]{Lions1971}, to obtain a unique solution $\tilde y\in\W{s}$ to equation \eqref{E:PulledbackEqn} for initial data $\phi^s_0 y_0\in L^2(\Gamma(s))$. Moreover the solution map is continuous
\[
\|\tilde y\|_\W{s}\le C\left(\|\tilde f\|_{L^2((0,T),H^{-1}(\Gamma(s)))}+\|\phi^s_0 y_0\|_{L^2(\Gamma(s))}\right)
\]
Note again that $\|\tilde f\|_{L^2((0,T),H^{-1}(\Gamma(s)))}\le C\|f\|_{L^2_{H^{-1}(\Gamma)}}$, since the multiplication with $J_t^s$ is a globally bounded linear homeomorphism in $H^1(\Gamma(s))$, as stated in the proof os Lemma \ref{L:W}.

The transformation of \eqref{E:WeakTransport} into \eqref{E:PulledbackEqn} works both ways, hence the uniqueness of $y\in W_\Gamma$. The norms can be estimated as in Lemma \ref{L:basics} and Lemma \ref{L:W} and the theorem follows. \end{proof}

With regard to order-optimal  convergence estimates, sometimes a slightly higher regularity than $y\in W_\Gamma$ is required. Assuming $f\in L^2_{L^2(\Gamma)}$ and $y_0\in H^1(\Gamma_0)$, one can apply a  Galerkin approximation argument, see \cite[Thms.  4.4 and 4.5]{DziukElliott2007} for manifolds or \cite{Evans1998} for open sets, to obtain

\begin{equation}\label{E:Smoothness}
\| \dot y\|^2_{L^2_{L^2(\Gamma)}}+\sup_{t\in[0,T]}\|\nabla_{\Gamma(t)} y\|_{L^2(\Gamma(t))}^2 + \mint{[0,T]}{\|y\|^2_{H^2(\Gamma(t))} }{t}\le C\left(\|y\|^2_{H^1(\Gamma(0))}+ \| f \|^2_{L^2_{L^2(\Gamma)}}\right)\,.
\end{equation}
Note that from  \cite[Ch. I,Thrm. 3.1]{LionsMagenes1968} it then follows that $\phi^s_t y\in C([0,T],H^1(\Gamma(s)))$.

\section{Control constrained optimal control problems}\label{S:OCP}

Using the results from the previous section, we can now formulate all kinds of control-constrained optimal control problems known for stationary domains, see for example \cite{Troeltzsch2005}.
As a first example, given a moving surface as in Assumption \ref{A:Phi}, let $S_T:L^2_{L^2(\Gamma)}\rightarrow L^2(\Gamma(T))$ denote the solution operator $u\mapsto y(T)$, where $y$ satisfies
\begin{equation}\label{E:SolOp}
\frac{\textup{d}}{\textup{d}t}\mint{\Gamma(t)}{y\,\varphi}{\Gamma(t)}+\mint{\Gamma(t)}{\nabla_\Gamma y\cdot\nabla_\Gamma\varphi}{\Gamma(t)}= \langle\dot\varphi,y\rangle_{H^{-1}({\Gamma(t)}),H^1({\Gamma(t)})}+\langle u,\varphi\rangle_{L^2_{L^2(\Gamma)}}\,,
\end{equation}
for all $\varphi\in W_\Gamma$, and with $y(0)=0\in L^2(\Gamma_0)$.
We know, that every function $y\in W_\Gamma$ has a representation in $C([0,T],L^2(\Gamma(s)))$ for any $s\in [0,T]$, compare Lemma \ref{L:WZeroT}, and the inclusion $\phi_{(\cdot)}^s W_\Gamma\subset C([0,T],L^2(\Gamma(s)))$ is continuous (in fact compact). Thus $S_T$ is a continuous linear operator.
Consider the Control problem
\begin{equation*}
(\mathbb{P}_T)\quad \left\{\begin{array}{l}
\min_{u\in L^2_{L^2(\Gamma)}} J(u) := \frac{1}{2}\|S_T(u)-y_T\|_{L^2(\Gamma(T))}^2 + \frac{\alpha}{2} \|u\|_{L^2_{L^2(\Gamma)} }\\
\mbox{s.t. }a\le u\le b\,,
\end{array}\right.
\end{equation*}
with $\alpha,a,b\in\R$, $a<b$ ,  $\alpha>0$, and $y_T\in L^2(\Gamma(T))$. This is now a well posed problem. By standard arguments, see for example \cite[Thm. 3.15]{Troeltzsch2005}, using the weak lower semicontinuity of $J(\cdot)$, one can conclude the existence of a unique solution $u\in L^2_{L^2(\Gamma)}$.

For an other example let the linear continuous solution operator $S_d:L^2_{L^2(\Gamma)}\rightarrow L^2_{L^2(\Gamma)}$, $u\mapsto y$, where $y$ solves \eqref{E:SolOp},
and consider the problem
\begin{equation*}
(\mathbb{P}_d)\quad \left\{\begin{array}{l}
\min_{u \in L^2_{L^2(\Gamma)}} J(u) := \frac{1}{2}\|S_d(u)-y_d\|_{L^2_{L^2(\Gamma)}}^2 + \frac{\alpha}{2} \|u\|_{L^2_{L^2(\Gamma)} }\\
\mbox{s.t. }a\le u\le b\,,
\end{array}\right.
\end{equation*}
with $\alpha,a,b$ as above and $y_d\in L^2_{L^2(\Gamma)}$. Again there exists a  unique solution, see \cite[Thm. 3.16]{Troeltzsch2005}.

The first order necessary optimality condition for $(\mathbb P_d)$ reads
\begin{equation}\label{E:NecCond}
\langle S_d u-y_d, S_d (v-u)\rangle_{L^2_{L^2(\Gamma)}}+\alpha \langle u,v-u\rangle_{L^2_{L^2(\Gamma)}} =\langle \alpha u+S_d^*(S_d u-y_d),v-u\rangle_{L^2_{L^2(\Gamma)}} \ge 0\,,
\end{equation}
for all $v\in U_{\textup{ad}}=\set{v\in {L^2_{L^2(\Gamma)}}}{a\le v\le b}$. The adjoint operator $S_d^*:L^2_{L^2(\Gamma)}\rightarrow L^2_{L^2(\Gamma)}$  maps $v\in L^2_{L^2(\Gamma)}$ onto the solution $p\in W_\Gamma$ of 
 \begin{equation}\label{E:AdjSolOp}
-\langle\dot p,\varphi\rangle_{H^{-1}({\Gamma(t)}),H^1({\Gamma(t)})}+\mint{\Gamma(t)}{\nabla_\Gamma p\cdot\nabla_\Gamma\varphi}{\Gamma(t)}= \langle v,\varphi\rangle_{L^2_{L^2(\Gamma)}}\,,
\end{equation}
for all $\varphi\in W_\Gamma$, and $p(T)=0\in L^2(\Gamma(T))$. This follows if one tests \eqref{E:SolOp} with $p$ and \eqref{E:AdjSolOp} with $y$. Integrate over $[0,T]$ and use $y(0)=0$ and $p(T)=0$ to arrive at $\langle v,y\rangle_{L^2_{L^2(\Gamma)}}=\langle p,u\rangle_{L^2_{L^2(\Gamma)}}$, for $u,v\in {L^2_{L^2(\Gamma)}}$ arbitrary. 

Note that via the time transform $t' = T-t$  Equation \eqref{E:AdjSolOp} converts into equation \eqref{E:WeakTransport} with $b=-\div_{\Gamma(t)}V$. Therefore all the results from Section \ref{S:WeakSolutions} also apply to \eqref{E:AdjSolOp}.

The necessary condition \eqref{E:NecCond} characterizes the optimum $u$ as the orthogonal projection of $-\frac{1}{\alpha}S_d^*(S_d u-y_d)$ onto $U_{\textup{ad}}$. In our situation this is the pointwise application of the projection $P_{[a,b]}:\R\rightarrow [a,b]$.
\begin{Lemma}
Let $P_{U_{\textup{ad}}}$ denote the $L^2_{L^2(\Gamma)}$-orthogonal projection onto $U_{\textup{ad}}$, which is defined by
\begin{equation}\label{E:OProj}
\langle u- P_{U_{\textup{ad}}}(u),v-P_{U_{\textup{ad}}}(u)\rangle_{L^2_{L^2(\Gamma)}}\le 0\,\quad\forall v\in U_{\textup{ad}}\,.
\end{equation}
Then for $u\in L^2_{L^2(\Gamma)}$ one has for a.e. $t\in [0,T]$
\[
P_{U_{\textup{ad}}}(u)[t]=P_{[a,b]}( u[t])\,.
\]
\end{Lemma}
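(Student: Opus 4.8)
The plan is to exhibit the pointwise candidate $w$, defined by $w[t](\gamma)=P_{[a,b]}(u[t](\gamma))=\max\big(a,\min(b,u[t](\gamma))\big)$, and to identify it with the abstract projection $P_{U_{\textup{ad}}}(u)$ by verifying the variational characterization \eqref{E:OProj}. Since $U_{\textup{ad}}$ is a nonempty (it contains the constant function $\equiv a$), closed, convex subset of the Hilbert space $L^2_{L^2(\Gamma)}$, the orthogonal projection exists and is the \emph{unique} element of $U_{\textup{ad}}$ satisfying \eqref{E:OProj}. Hence it suffices to show that $w\in U_{\textup{ad}}$ and that $w$ solves \eqref{E:OProj}, after which uniqueness forces $P_{U_{\textup{ad}}}(u)=w$ a.e.

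First I would settle well-definedness. Because $P_{[a,b]}:\R\to[a,b]$ is $1$-Lipschitz and bounded, the induced superposition operator preserves measurability and maps bounded functions to bounded functions. Membership in $L^2_{L^2(\Gamma)}$ is defined through the pullback in LemmaAndDef~\ref{LD:Spaces}, so I fix $s\in[0,T]$ and note that $\phi^s_t$ is pointwise composition with the diffeomorphism $\Phi^s_t$, hence commutes with $P_{[a,b]}$: one has $\phi^s_t w=P_{[a,b]}(\phi^s_t u)$. As $\phi^s_t u\in L^2((0,T),L^2(\Gamma(s)))$, the same simple-function approximation used in the proof of LemmaAndDef~\ref{LD:Spaces}, combined with the continuity of $P_{[a,b]}$ and of $\phi^s_t$ (Lemma~\ref{L:basics}, Lemma~\ref{L:NormContinuity}), shows that $\phi^s_t w$ is measurable; the bound $|\phi^s_t w|\le\max(|a|,|b|)$ on the finite-measure surfaces then gives $\phi^s_t w\in L^2((0,T),L^2(\Gamma(s)))$, so $w\in L^2_{L^2(\Gamma)}$, and by construction $a\le w\le b$, i.e.\ $w\in U_{\textup{ad}}$.

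Next I would verify the variational inequality. Expanding the inner product on $L^2_{L^2(\Gamma)}$ as a double integral over $\bigcup_t\Gamma(t)\times\{t\}$, I use the elementary one-dimensional fact that for every $\xi\in\R$ and every $\eta\in[a,b]$ there holds $(\xi-P_{[a,b]}(\xi))(\eta-P_{[a,b]}(\xi))\le 0$, which is precisely the metric-projection inequality onto the interval. Applying it pointwise with $\xi=u[t](\gamma)$ and $\eta=v[t](\gamma)$ for an arbitrary $v\in U_{\textup{ad}}$ (so $\eta\in[a,b]$ a.e.), the integrand $(u-w)(v-w)$ is nonpositive a.e.; integrating over $\Gamma(t)$ and then over $(0,T)$ yields $\langle u-w,v-w\rangle_{L^2_{L^2(\Gamma)}}\le 0$ for all $v\in U_{\textup{ad}}$, which is exactly \eqref{E:OProj}. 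Uniqueness of the projection then finishes the proof.

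The hard part will not be the scalar projection inequality or the uniqueness argument, which are routine, but the measurability and integrability bookkeeping that certifies $w$ as a genuine element of $L^2_{L^2(\Gamma)}$: since membership is phrased through the pullback $\phi^s_t$, one must check compatibility of the pointwise operation with $\phi^s_t$ and $s$-independence. It is also worth remarking that if one transfers everything to a fixed reference surface $\Gamma(s)$, the positive Jacobian weight $J^s_t$ appears in the inner product; because the weight is strictly positive, pointwise minimization is unaffected and the interval projection $P_{[a,b]}$ remains the metric projection for the weighted inner product, so no correction to the candidate $w$ is needed.
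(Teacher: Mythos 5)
Your proof is correct and takes essentially the same approach as the paper's: both rest on certifying $P_{[a,b]}(u)\in U_{\textup{ad}}$ via the pullback $\phi^s_t$ and then integrating the pointwise interval-projection inequality inside the $L^2_{L^2(\Gamma)}$ inner product. The only difference is organizational — you verify \eqref{E:OProj} for the candidate directly and invoke uniqueness of the projection, whereas the paper argues by contradiction after testing \eqref{E:OProj} with $v=P_{[a,b]}(u)$ — but the underlying computation is the same.
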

\begin{proof}
Because $L^2([0,T],L^2(\Gamma(s)))$ is isometrically isomorph to $L^2([0,T]\times \Gamma(s))$, and because $\phi_t^s P_{[a,b]}( u)\in L^2([0,T]\times \Gamma(s))$, we also have $\phi_t^s  P_{[a,b]}( u)\in L^2([0,T],L^2(\Gamma(s)))$.

Let $C=\set{t\in[0,T]}{P_{U_{\textup{ad}}}(u)[t]\neq P_{[a,b]}( u[t])}$ and assume $\textup{meas}(C)>0$. Now test \eqref{E:OProj} with $v= P_{[a,b]}( u[t])$ to arrive at
\begin{equation}\label{E:ProjContra}
\mint{[0,T]}{\langle u- P_{U_{\textup{ad}}}(u),P_{[a,b]}(u)-P_{U_{\textup{ad}}}(u)\rangle_{L^2(\Gamma(t))}}{t}\le 0\,.
\end{equation}
But now for a.e. $t\in[0,T]$ and a.e. $\gamma\in\Gamma(t)$ one has $$(u_t[\gamma] -P_{U_{\textup{ad}}}(u_t)[\gamma])(P_{[a,b]}(u_t)[\gamma]-P_{U_{\textup{ad}}}(u_t)[\gamma]\ge 0\,,$$
because $P_{U_{\textup{ad}}}(u_t)[\gamma]\in[a,b]$. Moreover for $t\in C$ we have 
$$(u_t[\gamma] -P_{U_{\textup{ad}}}(u_t)[\gamma])(P_{[a,b]}(u_t)[\gamma]-P_{U_{\textup{ad}}}(u_t)[\gamma]> 0\,,$$
on a set of positive measure. Since $\textup{meas}(C)>0$ this contradicts \eqref{E:ProjContra}.
\end{proof}
Introducing the adjoint state $p_d(u)=S_d^*(S_d u-y_d)$, let us now rewrite \eqref{E:NecCond} as
\begin{equation}\label{E:FONC}
u = P_{[a,b]}\left(-\frac{1}{\alpha}p_d(u)\right)\,.
\end{equation}

Similarly the unique solution u of $(\mathbb P_T)$ is characterized by $
u = P_{[a,b]}\left(-\frac{1}{\alpha}p_T(u)\right)$,
with $p_T(u)=S_T^*(S_T u-y_T)$.
Note that however the adjoint state $p_T$ in general is less smooth than $p_d$. This is because the adjoint equation, i.e. the equation describing $S_T^*:L^2(\Gamma(T))\rightarrow L^2_{L^2(\Gamma)}$, $v\mapsto p$, reads 
 \begin{equation*}
-\langle\dot p,\varphi\rangle_{H^{-1}({\Gamma(t)}),H^1({\Gamma(t)})}+\mint{\Gamma(t)}{\nabla_\Gamma p\cdot\nabla_\Gamma\varphi}{\Gamma(t)}=0\,,
\end{equation*}
for all $\varphi\in W_\Gamma$ and with $p(T)= v\in L^2(\Gamma(T))$. While Theorem \ref{T:UniqueSol} applies, this is not the case for the smoothness assertion \eqref{E:Smoothness}, as long as $y_d\in L^2(\Gamma(T))\setminus H^1(\Gamma(T))$.

Before we can discuss the discretized control problems in Section \ref{S:VarDisc}, in the next two sections we present some results on the discretization of the state equation.

\section{Finite Element Discretization}\label{S:FEDisc}


\newcommand{\Phih}[2]{\Phi^{#1}_{#2,h}}
\newcommand{\phih}[2]{\phi^{#1}_{#2,h}}
We now discretize $\Gamma$ using an approximation $\Gamma^h_0$ of $\Gamma_0$ which is globally of class $C^{0,1}$. For the sake of convenience let us assume $n=2$, i.e. $\Gamma(t)$ is a hypersurface in $\R^3$. 

Following \cite{Dziuk1988} and \cite{DziukElliott2007}, we consider $\Gamma^h_0=\bigcup_{i\in I_h}T_h^i$ consisting of triangles $T_h^i$ with corners  on $\Gamma_0$, whose maximum diameter is denoted by $h$. With FEM error bounds in mind we assume the family of triangulations $\{\Gamma^h_0\}_{h>0}$ to be regular in the usual sense that the angles of all triangles are bounded away from zero uniformly in $h$.

As detailed in \cite{DziukElliott2010_PP} and \cite{DziukElliott2007} an evolving triangulation $\Gamma^h(t)$ of $\Gamma(t)$ is obtained by subjecting the vertices of  $\Gamma^h_0$ to the flow $\bar\Phi$. Hence, the nodes of $\Gamma^h(t)$ reside on $\Gamma(t)$ for all times $t\in[0,T]$, the triangles $T_h^i$ being deformed into triangles $T_h^i(t)$ by the movement of the vertices.
Let $m_h$ denote the number of vertices $\{X_j^0\}_{j=1}^{m_h}$ in $\Gamma^h_0$. Now $X_j(t)$ solves
\begin{equation}\label{E:VertexMovement}
\frac{\d}{\d t}X_j(t)=V(X_j(t),t)\,,\quad X_j(0)=X_j^0\,.
\end{equation}

Consider the finite element space
$$V_h(t)=\set{v\in L^2({\Gamma^h(t)})}{v\in C(\Gamma^h(t))\textup{ and } \forall i\in I_h:\: v\big|_{T^i_h(t)}\in\Pi^1(T^i_h(t))}$$
of piecewise linear, globally continuous functions on $\Gamma^h(t)$, and its nodal basis functions $\{\varphi_j(t)\}_{j=1}^{m_h}$ that are one at exactly one vertex $X_i(t)$ of $\Gamma^h(t)$ and zero at all others.
 For the finite element approach, it is crucial for the triangles $T_h^i(t)$ not to degenerate while $\Gamma^h(t)$ evolves, which leads us to the following assumption.

\begin{Assumption}\label{A:Uniformness}
The angles of the triangles $T_h^i(t)$ are bounded away from zero, uniformly w.r.t. $h,i$ and $t$.  Also assume $a_t(\Gamma^h(t))=\Gamma(t)$, with the restriction of $a_t$ to $\Gamma^h(t)$ being a  homeomorphism between $\Gamma^h(t)$ and $\Gamma(t)$.
\end{Assumption}
In order to ensure optimal approximation properties of the discretization of the surface, we require $d$ to be twice Lipschitz-continuously differentiable.
\begin{Assumption}\label{A:AddReg}
$d\in C^{2,1}(\mathcal N_T)$. 
\end{Assumption}

Let us summarize some basic properties of the family $\{\Gamma^h(t)\}_{t\in[0,T]}$.
\begin{LemmaAndDef}\label{L:Phih}
 Let $\Phih{s}{\cdot}:\Gamma^h(s)\times[0,T]\rightarrow\mathbb R^3$ denote the  flow of $\Gamma^h$, i.e. the unique continuous map, such that $\Phih{s}{t}(T_h^i(s))=T_h^i(t)$ and $\Phih{s}{t}$ is affine linear on each $T_h^i(s)$. 
 
 There holds  $\Phih{r}{t}= \Phih{s}{t}\circ{\Phih{r}{s}}$ and thus $\Phih{t}{s}\circ\Phih{s}{t}=\textup{id}_{\Gamma^h(s)}$. The velocity $V_h=\partial_t \Phih{0}{t}$ is the piecewise linear interpolant of $V$ on each triangle $T_h^i(t)$.
 
 As in Lemma \ref{L:basics} we define the pull-back $\phih{s}{t}:L^2(\Gamma^h(t))\rightarrow L^2(\Gamma^h(s))$, $\phih{s}{t}v = v\circ \Phih{t}{s}$.
 
  The piecewise constant Jacobian  $J_{t,h}^s$ of $\Phih{s}{t}$  satisfies for all $s,t\in[0,T]$
\begin{equation}\label{E:Jh}
\frac{1}{C_J^h}\le \min_{\gamma\in\Gamma(s)}J^s_{t,h}(\gamma)\le \max_{\gamma\in\Gamma(s)}J^s_{t,h}(\gamma)\le C_J^h\,,
\end{equation}
for some constant $C_J^h>0$ that does not depend on $h>0$. Moreover $J_{t,h}^s$ and $\textup D_{\Gamma(s)}\Phih{s}{t}:T\Gamma(s)\rightarrow T\Gamma(t)\subset\R^3$ are differentiable with respect to time in the interior of each $T_h^i(s)$.
 
The nodal basis functions have the transport property 
\begin{equation}\label{E:MatDerAnsatz}\dot\varphi_i=\phih{t}{0}\frac{\d}{\d t}\phih{0}{t}\varphi_i\equiv0\,,\quad1\le i\le m_h\,.
\end{equation} 
Let $\nu^h(t)$ denote the normals of $\Gamma^h(t)$, defined on each $T_h^i(t)$.
\end{LemmaAndDef}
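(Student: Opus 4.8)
The plan is to construct $\Phih{s}{t}$ one triangle at a time and then to read every assertion off the elementary fact that an affine map on a non-degenerate triangle is uniquely determined by its three vertex values. On a fixed $T_h^i(s)$, whose vertices $X_{j_0}(s),X_{j_1}(s),X_{j_2}(s)$ form a non-degenerate triangle by Assumption~\ref{A:Uniformness}, there is a unique affine map carrying them to $X_{j_0}(t),X_{j_1}(t),X_{j_2}(t)$, and I declare $\Phih{s}{t}$ to equal this map there. Two triangles sharing an edge share the two corresponding vertices, so the two affine pieces agree on that edge; the pieces therefore glue into a globally continuous, piecewise affine map, and this is the unique such map realising the prescribed nodal motion. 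This establishes the first assertion.

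The flow identity $\Phih{r}{t}=\Phih{s}{t}\circ\Phih{r}{s}$ then follows from this uniqueness: on each $T_h^i(r)$ both sides are affine (a composition of affine maps is affine) and both send $X_j(r)\mapsto X_j(t)$, so they coincide; the special case $t=r$ forces $\Phih{s}{r}\circ\Phih{r}{s}=\textup{id}_{\Gamma^h(r)}$, which is the stated inverse relation. Writing a point of $T_h^i(0)$ in barycentric coordinates as $\gamma=\sum_k\lambda_k X_{j_k}^0$ one has $\Phih{0}{t}(\gamma)=\sum_k\lambda_k X_{j_k}(t)$, and differentiating in $t$ with the vertex ODE \eqref{E:VertexMovement} gives $\partial_t\Phih{0}{t}(\gamma)=\sum_k\lambda_k V(X_{j_k}(t),t)$, i.e. the piecewise linear interpolant of $V(\cdot,t)$ evaluated at the image point, which is exactly $V_h$. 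The pull-back $\phih{s}{t}$ is modelled on Lemma~\ref{L:basics} and is a bounded operator between the corresponding $L^2$-spaces as soon as the Jacobian bound below is available.

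The step I expect to be the main obstacle is the two-sided bound \eqref{E:Jh} with a constant uniform in $h$. Here $J^s_{t,h}$ is the piecewise constant area ratio $\meas{T_h^i(t)}/\meas{T_h^i(s)}$, and the idea is to compare the affine map with the smooth flow, which moves the vertices in the same way, $X_j(t)=\Phi^s_t(X_j(s))$. Denoting the edge vectors of $T_h^i(s)$ by $e_1,e_2$ and those of $T_h^i(t)$ by $f_1,f_2$, a second order Taylor expansion of $\bar\Phi^s_t$ about a vertex yields $f_k=\textup{D}\bar\Phi^s_t\,e_k+R_k$ with $\|R_k\|\le C\|e_k\|^2$, where $C$ controls the second space derivatives of the flow (finite by Assumption~\ref{A:Phi}, and Lipschitz by Assumption~\ref{A:AddReg}, which makes the estimate uniform). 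Expanding $f_1\times f_2$ and using the shape-regularity of Assumption~\ref{A:Uniformness} to bound the angle of the triangle away from $0$ and $\pi$, so that $\|e_1\times e_2\|$ is comparable to $\|e_1\|\,\|e_2\|$, one finds that the cross terms are of relative size $O(h)$ while the leading term reproduces the tangential Jacobian of $\textup{D}_{\Gamma(s)}\Phi^s_t$. Since the latter has singular values bounded above and below uniformly in $s,t$ -- this is the content of the constant $C_J$ of Lemma~\ref{L:basics} -- the area ratio lies in a fixed interval up to the $O(h)$ perturbation, which is absorbed for $h$ below a threshold; this proves \eqref{E:Jh} with a constant independent of $h$, and with it the boundedness of $\phih{s}{t}$. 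The differentiability of $J^s_{t,h}$ and of $\textup{D}_{\Gamma(s)}\Phih{s}{t}$ in the interior of each triangle is then immediate, since there both are algebraic expressions in the edge vectors $f_k(t)$, and the latter depend differentiably on $t$ because the vertex trajectories $X_j(t)$ inherit the regularity of $V$ through \eqref{E:VertexMovement}.

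Finally, the transport property \eqref{E:MatDerAnsatz} is the cleanest point. The pull-back $\phih{0}{t}\varphi_i$ is a piecewise linear function on $\Gamma^h(0)$ whose value at the node $X_j^0$ equals $\varphi_i(t)\big(\Phih{0}{t}(X_j^0)\big)=\varphi_i(t)(X_j(t))=\delta_{ij}$, independently of $t$; being determined by its nodal values it therefore coincides with $\varphi_i(0)$ for every $t$, so $\tfrac{\d}{\d t}\phih{0}{t}\varphi_i=0$ and hence $\dot\varphi_i=\phih{t}{0}\,0\equiv0$. The normals $\nu^h(t)$ require no argument beyond noting that each flat triangle $T_h^i(t)$ carries a well-defined constant unit normal.
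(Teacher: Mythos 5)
Your proposal is correct, and for most of the statement it coincides with the paper's own argument: the paper also builds $\Phi^s_{t,h}$ triangle by triangle, writing it explicitly in reduced barycentric coordinates as $\Phi^s_{t,h}(\gamma)=X^i(t)(X^i(s)^TX^i(s))^{-1}X^i(s)^T(\gamma-X_1(s))+X_1(t)$, obtains the group property and the formula for $V_h$ from the nodal data exactly as you do, and proves the transport property from $\phi^0_{t,h}\varphi_i(t)=\varphi_i(0)$. The one place where you take a genuinely different route is the uniform bound \eqref{E:Jh}. The paper stays entirely on the discrete level: it bounds the singular values of the affine derivative $D^i_{s,t}=X^i(t)(X^i(s)^TX^i(s))^{-1}X^i(s)^T$ by showing that the angle condition of Assumption \ref{A:Uniformness} makes the Gram matrix $X^i(s)^TX^i(s)$ well conditioned relative to the squared edge lengths, bounds the resulting edge-length quotient uniformly, and gets the lower bound for free from $1/J^s_{t,h}=J^t_{s,h}$. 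You instead compare the discrete flow with the smooth flow by Taylor expansion of $\bar\Phi^s_t$ at a vertex and inherit the two-sided bound from the constant $C_J$ of Lemma \ref{L:basics} up to an $\mathcal O(h)$ perturbation. Both work; your version is closer in spirit to the $\delta_h=1+\mathcal O(h^2)$ estimates of Lemma \ref{L:Integration} and yields the sharper information $J^s_{t,h}=J^s_t+\mathcal O(h)$, while the paper's version is purely algebraic and does not require $h$ to be below a threshold. Two points you should make explicit: (i) the edge vectors $e_k$ are chords, not tangent vectors of $\Gamma(s)$, so identifying the leading term $\|(\textup{D}\bar\Phi^s_t e_1)\times(\textup{D}\bar\Phi^s_t e_2)\|/\|e_1\times e_2\|$ with the tangential Jacobian $J^s_t$ needs the additional (standard) observation that the chord directions deviate from the tangent plane by $\mathcal O(h)$; and (ii) Assumption \ref{A:AddReg} is not needed here, since the $C^2$ regularity of $V$ in Assumption \ref{A:Phi} already gives uniform bounds on the second spatial derivatives of the flow and hence on your remainders $R_k$.
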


\begin{proof}

Consider a Triangle $T_h^i(s)$, $s\in[0,T]$. W.l.o.g. let $X_1(s),X_2(s),X_3(s)$ denote its vertices. Then, using matrices $X^i(t)=\left(X_2(t)-X_1(t),X_3(t)-X_1(t)\right)$, we can write $\gamma\in T_h^i(s)$ in reduced barycentric coordinates as $\lambda_\gamma(s)=(X^i(s)^TX^i(s))^{-1}X^i(s)^T(\gamma-X_1(s))$. On $T_h^i(s)$ the transformation $\Phih{s}{t}$ is uniquely defined by $\lambda_{\Phih{s}{t}\gamma}(t)=\lambda_\gamma(s)$  and thus 
\begin{equation*}
\Phih{s}{t}(\gamma)= X^i(t)(X^i(s)^TX^i(s))^{-1}X^i(s)^T(\gamma-X_1(s))+X_1(t)\,.
\end{equation*}
In the relative interior of $T_h^i(s)$ the map $\Phih{s}{t}:T_h^i(s)\rightarrow T_h^i(t)$ is differentiable and its derivative $\textup D_{T_h^i(s)}\Phih{s}{t}:\R^3\supset TT_h^i(s)\rightarrow TT_h^i(t)\subset \R^3$ can be represented in terms of the standard basis of $\R^3$ by the matrix
$
D^i_{s,t}= X^i(t)X^i(s)^TX^i(s))^{-1}X^i(s)^T\,.
$

Now one easily proves that the angle condition in Assumption \ref{A:Uniformness} ensures the existence of $c>0$ such that $\lambda^T X^i(s)^TX^i(s)\lambda\ge c \,\min(\|X_2(s)-X_1(s)\|^2,\|X_3(s)-X_1(s)\|^2)\|\lambda\|^2$ for all $\lambda\in\R^2$, $s\in[0,T]$. Hence, $\|(X^i(s)^TX^i(s))^{-1}\|_2\le \left(c \,\min(\|X_2(s)-X_1(s)\|^2,\|X_3(s)-X_1(s)\|^2)\right)^{-1}$, and since $\|X^i(s)^T\|_2^2,\|X^i(s)\|^2_2\le 2\max(\|X_2(s)-X_1(s)\|^2,\|X_3(s)-X_1(s)\|^2)$ we get
$$
\frac{\|\textup D_{T_h^i(s)}\Phih{s}{t}\d\gamma\|}{\|\d\gamma\|}\le C\frac{\max(\|X_2(s)-X_1(s)\|^2,\|X_3(s)-X_1(s)\|^2)}{\min(\|X_2(s)-X_1(s)\|^2,\|X_3(s)-X_1(s)\|^2} 
$$
for all $\d\gamma\in TT_h^i(s)$. Using again  Assumption \ref{A:Uniformness} one concludes that the quotient of edge lengths is uniformly bounded. 

Also, one easily verifies for $r,t\in[0,T]$
\begin{equation} \label{E:Invertability}
\Phih{r}{t}\gamma= (\Phih{s}{t}\circ{\Phih{r}{s}})\gamma \textup{ and }\Phih{t}{s}\Phih{s}{t}=\textup{id}_{\Gamma^h(s)}\,.
\end{equation}

 We have  $J_{t,h}^s\Big|_{T_h^i(s)}= \sqrt{\textup{det}\left( (\textup D_{T_h^i(s)}{\Phih{s}{t}})^T\textup D_{T_h^i(s)}\Phih{s}{t}\right)}$ on the triangle $T_h^i(s)$, where the derivative is represented with respect to an orthonormal basis $\mathfrak B(s)$ of $TT_h^i(s)$. 
 As per above considerations the spectral radius of  $(D_{s,t}^i)^TD_{s,t}^i$ is uniformly bounded. Hence, there exists $C_J^h> 0$ such that $J_{t,h}^s\Big|_{T_h^i(s)}=\sqrt{\det\left(\mathfrak B(s)^T(D_{s,t}^i)^TD_{s,t}^i\mathfrak B(s)\right)} \le C_J^h$. Because we can switch $s$ and $t$ and since by \eqref{E:Invertability} we have $(\Phih{s}{t})^{-1}=\Phih{t}{s}$ and thus $\frac{1}{J_{t,h}^s}=J_{s,h}^t\le C_J^h$ we conclude 
\begin{equation*}
\forall s,t\in[0,T]:\:\forall \gamma\in \Gamma_s^h\:\frac{1}{C_J^h}\le J_{t,h}^s(\gamma)\le C_J^h\,.
\end{equation*}
The trajectories $\Phih{s}{t}\gamma$, $\gamma\in\Gamma^h(s)$, the Jacobians $J_{t,h}^s$, and the entries of $D^i_{s,t}$ are differentiable for t, because the trajectories $X_j(t)$, $1\le j\le m_h$ are, compare \eqref{E:VertexMovement}. Hence also $\textup D_{\Gamma(s)}\Phih{s}{t}$ is differentiable as  a map into $\R^3$. The velocity $V_h(\gamma,s)=\partial_t\Phih{s}{t}\gamma$ equals $V$ at the vertices and depends linearly on the coordinates $\lambda_\gamma$.
As for the transport property \eqref{E:MatDerAnsatz}, it is a consequence of the piecewise linear transformations of the piecewise linear Ansatz functions $\varphi_i$ which implies $\phih{0}{t}\varphi_i(t)=\varphi_i(0)$, compare  \cite[Prop. 5.4]{DziukElliott2007}. 
\end{proof}

 \begin{Remark}
Similarly one can  prove  the map $\Phih{s}{t}:\Gamma^h(s)\rightarrow\Gamma^h(t)$ to be bi-Lipschitz with respect to the respective metrics. The Lipschitz constant $L$ does not depend on $s,t\in[0,T]$. 
 \end{Remark}
In order to compare functions defined on $\Gamma^h(t)$ with functions on $\Gamma(t)$, for sufficiently small $h>0$ we use the projection $a_t$ from \eqref{E:Projection} to lift a function $y\in L^2(\Gamma^h(t))$ to $\Gamma(t)$ 
$$
y^l(a_t(x))=y(x)\quad \forall x\in \Gamma^h(t)\,,
$$
and for $y\in L^2(\Gamma(t))$  we define the inverse lift
$$
y_l(x)=y(a_t(x))\quad \forall x\in \Gamma^h(t)\,.
$$
For small mesh parameters $h$ the lift operation $\ldown: \L2\rightarrow \Ltwoh$ defines a linear homeomorphism with inverse $\lup$. Moreover, there exists $\C>0$ such that 
\begin{equation}\label{E:IntBnd}
1-\C h^2\le \|\ldown\|_{\mathcal L(\L2,\Ld)}^2,\|\lup\|_{\mathcal L(\Ld,\L2)}^2\le 1+ \C h^2\,,\end{equation}
 as shows the following lemma.
\begin{LemmaAndDef}\label{L:Integration}
The restriction of $a_t$ to $\Gamma^h(t)$ is a piecewise diffeomorphism.
Denote by  $\delta_h$ the Jacobian of $a_t|_{\Gamma^h(t)}:\Gamma^h(t)\rightarrow\Gamma(t)$, i.e. $\delta_h=\frac{\textup{d}\Gamma}{\textup{d}\Gamma^h}=|\textup{det}(M)|$ where $M\in\R^{2\times 2}$ represents the Derivative $\textup da_t(x):T_x\Gamma^h(t)\rightarrow T_{a(x)}\Gamma(t)$ with respect to arbitrary orthonormal bases of the respective tangential space. For small $h>0$ there holds
$$
\sup_{t\in[0,T]}\sup_{\Gamma(t)} \left| 1-\delta_h\right|\le C h^2\, ,
$$
In particular $a_t|_{\Gamma^h(t)}$ is a diffeomorphism on each triangle $T_h^i(t)$. Now $\frac{1}{\delta_h}=\frac{\textup{d}\Gamma^h}{\textup{d}\Gamma}=|\textup{det}(M^{-1})|$, so that by the change of variable formula
$$
\left|\mint{\Gamma^h(t)}{v_l}{\Gamma^h(t)}-\mint{\Gamma(t)}{v}{\Gamma(t)}\right|=\left|\mint{\Gamma(t)}{v\frac{1}{\delta_h^l}-v}{\Gamma(t)}\right|\le \C h^2\|v\|_{L^1(\Gamma)}\,.
$$
Also there exists $C>0$ such that
\begin{enumerate}
 \item $\sup_{t\in[0,T]}\|\dot \delta_h(t)\|_{L^\infty(\Gamma^h(t))}\le Ch^2$, where the material derivative is to be understood in the sense of $\Phih{0}{t}$ and
 \item $\sup_{t\in[0,T]}\| \mathcal P(I-\mathcal R_h^l)\mathcal P\|_{L^\infty(\Gamma(t))}\le C h^2$, where $\mathcal R_h=\frac{1}{\delta_h^l}\left(I-d \mathcal H\right)\mathcal P^h\left(I-d \mathcal H\right)$, $\mathcal H_{ij}=\partial_{x_ix_j}d$, and $\mathcal P = \{\delta_{ij} -\nu_i\nu_j\}_{i,j=1}^{n+1}$ and $\mathcal P^h =\{ \delta_{ij} -\nu_i^h\nu_j^h\}_{i,j=1}^{n+1}$ are the  projections on the respective tangential space.
 \end{enumerate}
\end{LemmaAndDef}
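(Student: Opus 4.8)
The plan is to reduce everything to two pointwise geometric estimates on each flat triangle and then propagate them through the algebra. \emph{Step 1 (basic geometric estimates).} Using Assumption~\ref{A:AddReg} ($d\in C^{2,1}(\mathcal N_T)$) together with the fact that the vertices of $\Gamma^h(t)$ lie on $\Gamma(t)$, so that $d$ vanishes there, a Taylor/interpolation argument on each triangle $T_h^i(t)$ yields the uniform bounds
\[
\sup_{t\in[0,T]}\|d(\cdot,t)\|_{L^\infty(\Gamma^h(t))}\le Ch^2,\qquad \sup_{t\in[0,T]}\|\nu-\nu^h\|_{L^\infty(\Gamma^h(t))}\le Ch.
\]
Uniformity in $t$ follows from the compactness of $[0,T]$ and the non-degeneracy Assumption~\ref{A:Uniformness}, which keeps the shape-regularity constants and the relevant $\|D^2 d\|$-type bounds independent of $t$ and $h$.

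\emph{Step 2 (Jacobian and change of variables).} Differentiating $a_t=\textup{id}-d\,\nabla d$ gives $\textup{D}a_t=\mathcal P-d\mathcal H$ with $\mathcal P=I-\nu\nu^T$ and $\mathcal H=D^2 d$. Restricting to $T_x\Gamma^h(t)$ and forming the Gram determinant with respect to orthonormal bases yields $\delta_h=\sqrt{\det G}$ with $G_{ij}=\langle \textup{D}a_t\,\tau_i^h,\textup{D}a_t\,\tau_j^h\rangle$; since $\|\mathcal P\tau_i^h\|^2=1-\langle\tau_i^h,\nu-\nu^h\rangle^2=1-O(h^2)$ by Step~1 and the $d\mathcal H$ contribution is $O(h^2)$, one obtains $G=I+O(h^2)$, hence $|1-\delta_h|\le Ch^2$. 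For $h$ small this also shows $\textup{D}a_t|_{T_x\Gamma^h}$ is invertible, and since $\Gamma^h(t)$ lies in the tubular neighborhood where $a_t$ is injective, $a_t$ is a diffeomorphism on each triangle. The change-of-variable identity is then immediate, and the final integration bound follows from $|1/\delta_h-1|\le Ch^2$.

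\emph{Step 3 (the two enumerated claims).} For the second claim I would compute directly: $\mathcal P(I-d\mathcal H)=\mathcal P+O(h^2)$, and $\mathcal P\mathcal P^h\mathcal P=\mathcal P-(\mathcal P\nu^h)(\mathcal P\nu^h)^T=\mathcal P+O(h^2)$ because $\mathcal P\nu^h=\mathcal P(\nu^h-\nu)=O(h)$; combining these with $1/\delta_h=1+O(h^2)$ gives $\mathcal P\mathcal R_h^l\mathcal P=\mathcal P+O(h^2)$, i.e. $\|\mathcal P(I-\mathcal R_h^l)\mathcal P\|_{L^\infty}\le Ch^2$. The material-derivative estimate is the delicate one. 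Writing $\delta_h=1+r_h$, the $d\mathcal H$-part of $r_h$ differentiates to $O(h^2)$ once one knows the discrete material derivative of $d$ is $O(h^2)$: this holds because $\partial_t d+V\cdot\nabla d$ vanishes on $\Gamma(t)$ and is therefore $O(h^2)$ on $\Gamma^h(t)$, while $\nabla d\cdot(V_h-V)=O(h^2)$ since $V_h$ is the piecewise linear interpolant of the $C^2$ field $V$. The tilt part of $r_h$ is $\sim|\mathcal P\nu^h|^2$, whose discrete material derivative equals $2\langle\mathcal P\nu^h,\dot\nu^h-\dot\nu\rangle+O(h^3)$ after cancelling the terms orthogonal to $\mathcal P\nu^h$; here the crucial point is that $\dot\nu^h-\dot\nu=O(h)$, which again follows from $V_h$ interpolating $V$ with $\|\nabla(V-V_h)\|_\infty=O(h)$, so that this contribution is $O(h^2)$ as well.

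\emph{Main obstacle.} The bookkeeping in Step~2 is routine and the $\mathcal R_h$-estimate is purely pointwise. The genuine difficulty is the bound $\|\dot\delta_h\|_\infty\le Ch^2$: a naive differentiation of the tilt term only gives $O(h)$, and one recovers the extra power of $h$ solely by using the exact tangency-preservation of the smooth flow together with the fact that the discrete velocity $V_h$ interpolates $V$ to second order in value and first order in gradient, so that the discrete and continuous material derivatives of the normal coincide up to $O(h)$. Making this cancellation rigorous — in particular controlling the time derivatives of the moving triangle bases via Lemma~\ref{L:Phih} — is where the main work lies.
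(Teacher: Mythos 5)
Your proposal is correct in substance and, for the Jacobian estimate, the change-of-variables bound and assertion \emph{2.}, it coincides with the paper's argument (which itself summarizes \cite[Lemma 5.1]{DziukElliott2007}): everything rests on $\|d\|_{L^\infty(\Gamma^h(t))}\le Ch^2$ and a normal tilt of order $h$, uniformly in $t$ by Assumptions \ref{A:Uniformness} and \ref{A:AddReg}, and your Gram-determinant computation $G=I+\mathcal O(h^2)$ is just a coordinate-free rewriting of the paper's $\delta_h=\sqrt{1-\nu_1^2-\nu_2^2}+dR$.

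Where you genuinely diverge is assertion \emph{1.} The paper works entirely on the reference triangle: since both $d$ and $\dot d$ vanish at the vertices (the vertices are transported by the exact velocity and stay on $\Gamma(t)$), the linear interpolants $I_hd$ and $I_h\dot d$ vanish, so standard interpolation estimates give $\|\dot d\|_{L^\infty(e)}\le Ch^2$ \emph{and} $\|\partial_{x_i}\dot d\|_{L^\infty(e)}\le Ch$; differentiating the identity $(\nu_1,\nu_2)M_t=\textup D_{(x_1,x_2)}(\phi^s_{t,h}d)$ then yields $\dot\nu_i=\mathcal O(h)$ directly, with no need to ever compute $\dot\nu^h$. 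You instead route the tilt estimate through $\dot\nu^h-\dot\nu=\mathcal O(h)$. That claim is true, and your identification of the underlying cancellation (the discrete velocity interpolates $V$, so discrete and continuous rotations of the normal agree to first order) is the same cancellation the paper exploits; but as stated it is the one step that is not yet a proof. To close it you would have to differentiate $\nu^h=(e_2\times e_3)/|e_2\times e_3|$ along the vertex flow, use $\dot e_j=\nabla V\,e_j+\mathcal O(h^2)$ and the identity $(Ae_2)\times e_3+e_2\times(Ae_3)=(\operatorname{tr}A)\,e_2\times e_3-A^T(e_2\times e_3)$, and compare with the classical formula for $\dot\nu$ — a computation of the same length as the paper's, so your route buys no simplification, only a more geometric phrasing. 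Your reduction $\tfrac{\d}{\d t}|\mathcal P\nu^h|^2=2\langle\mathcal P\nu^h,\mathcal P(\dot\nu^h-\dot\nu)\rangle+\mathcal O(h^2)$ does check out (using $\dot\nu\cdot\nu=0$ and $\nu\cdot\nu^h=1+\mathcal O(h^2)$), and your two-term splitting of $\dot d=(\partial_td+V\cdot\nabla d)+(V_h-V)\cdot\nabla d$ is a valid alternative to the paper's interpolation argument for $\|\dot d\|_{L^\infty}\le Ch^2$.
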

\begin{proof} We summarize the proof given in \cite[Lemma 5.1]{DziukElliott2007} to extend it for the \emph{1.}  assertion. A similar proof can be found in \cite[Lemma 5.4]{DziukElliott2010_PP}.
Following  \cite{DziukElliott2007}, we use local coordinates on a triangle $e:=T_h^i(s)$ . W.l.o.g. one can assume $e\in\mathbb R^2\times \{0\}$. Since both $d$ and $\dot d=\frac{\d}{\d t}\phih{s}{t} d$ equal zero at the corners, the linear interpolates $I_hd$, $I_h\dot d$ vanish on $e$ thus, using standard finite element approximation results, we get
\[
\|d\|_{L^\infty(e)}=\|d-I_hd\|_{L^\infty(e)}\le ch^2\|d\|_{H^{2,\infty}(e)}\le c h^2\|d\|_{C^{1,1}(\mathcal N_T)}
\]
and similarly $\|\dot d\|_{L^\infty(e)}\le c h^2\|d\|_{C^{2,1}(\mathcal N_T)}$. Also  one has 
\begin{equation}\label{E:DerEst}
\|\partial_{x_i}d\|_{L^\infty(e)}\le c h \|d\|_{C^{1,1}(\mathcal N_T)}\text{ and }\|\partial_{x_i}\dot d\|_{L^\infty(e)}\le c h \| d\|_{C^{2,1}(\mathcal N_T)}
\end{equation}
 for $i=1,2$ at any point $(x_1,x_2,0)\in e$. 
 
Consider the basis $\mathfrak B(t) = \{\partial_{x_1}\Phih{s}{t},\partial_{x_2}\Phih{s}{t}, \nu^h(t)\}$ of $\R^3$, whose first two members span the tangential space  of $T_h^i(t)$. Let $(\nu_1(t),\nu_2(t),\nu_3(t))^T$ represent $\nu_l(t)=\nabla d(\cdot,t)$ with respect to $\mathfrak B(t)$. Note that $\mathfrak B(s)$ are the unit vectors. 

We have $(\nu_1(t),\nu_2(t))^T = M_t^{-1} (\textup D_{(x_1,x_2)}{\Phih{s}{t}})^T\nabla d$, with the uniformly positive definite matrix $M_t = (\textup D_{(x_1,x_2)}{\Phih{s}{t}})^T\textup D_{(x_1,x_2)}\Phih{s}{t}$. Now 
\[
\begin{split}
\mathcal O(h)=\textup D_{(x_1,x_2)}\dot d=\textup D_{(x_1,x_2)}\frac{\d}{\d t} \phih{s}{t}d=&\frac{\d}{\d t} \textup D_{(x_1,x_2)}\phih{s}{t} d = \frac{\d}{\d t}( \phih{s}{t}\nabla d^T\textup D_{(x_1,x_2)}\Phih{s}{t}) \\
=&\frac{\d}{\d t}( \phih{s}{t}(\nu_1,\nu_2)M_t) =(\dot\nu_1,\dot\nu_2)M+\underbrace{(\nu_1,\nu_2)\frac{\d}{\d t}M_t}_{\mathcal O(h)}\,,
\end{split}\]
where we used $\nu_i(\gamma,s) = \partial_{x_i} d(\gamma,s)$ and  \eqref{E:DerEst}. We subsume
  $$\|\nu_i\|_{L^\infty(e)},\,\|\dot \nu_i\|_{L^\infty(e)}\le c h\|d\|_{C^{2,1}(\mathcal N_T)}\,.$$
 One has
 \begin{equation*}
 \textup D a_t = \textup{Id}-\nabla d( \nabla d)^T - d \nabla^2 d 
 \end{equation*}
 and with $\nabla d(\cdot,s)=(\nu_1(s),\nu_2(s),\nu_3(s))^T$ we  compute (see \cite{DziukElliott2007})
 \[
 \delta_h = \|\partial_{x_1}a_t\times \partial_{x_2}a_t\|=|\nu_3| + d R(\nu,\partial_{x_1}\nu,\partial_{x_2}\nu)=\sqrt{1-\nu_{1}^2-\nu_{2}^2}+ d R(\nu,\partial_{x_1}\nu,\partial_{x_2}\nu) = 1+\mathcal O(h^2)
 \]
with some smooth remainder function $R$ and finally, since  $d= \mathcal O(h^2)$
 \[
\|\dot \delta_h \|_{L^\infty(e)}= \left\|\frac{-\nu_{1}\dot\nu_1-\nu_{2}\dot \nu_2}{\sqrt{1-\nu_{1}^2-\nu_{2}^2}} +\mathcal O(h^2)\right\|_{L^\infty(e)}\le Ch^2\,,
 \]
 where we used $|\nu_i|,|\dot\nu_i|\le C h$, $i=1,2$.
For a proof of \emph{2.} see \cite[Lemma 5.1]{DziukElliott2007}.\end{proof}
The next Lemma concerns the continuity of the lift operations between $L^2_{L^2(\Gamma^h)}$ and $L^2_{L^2(\Gamma)}$.
\begin{Lemma}\label{L:LiftContinuity}
Using the pull-back $\phih{s}{t}$ we can define $L^2_{L^2(\Gamma^h)}$ as in Lemma \ref{LD:Spaces}.
For sufficiently small $h>0$ the lift operation $\lup$ constitutes a continuous isomorphism between $L^2_{L^2(\Gamma)}$ and $L^2_{L^2(\Gamma^h)}$ with inverse $\ldown$. There holds
$$
\left|\langle f_l,g_l \rangle_{L^2_{L^2(\Gamma^h)}}-\langle f,g\rangle_{L^2_{L^2(\Gamma)}}\right|\le\C h^2|\langle f,g \rangle_{L^2_{L^2(\Gamma)}}|\,.
$$
\end{Lemma}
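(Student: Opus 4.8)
The plan is to treat the claim in two stages: first verify that $\ldown$ and $\lup$ are mutually inverse bounded maps between $L^2_{L^2(\Gamma)}$ and $L^2_{L^2(\Gamma^h)}$, and then extract the quantitative estimate directly from the pointwise Jacobian bound of Lemma~\ref{L:Integration}. Throughout I use that the inverse lift is composition with the projection $a_t$, so that it commutes with pointwise algebraic operations; in particular $(f^2)_l=(f_l)^2$ and $(fg)_l=f_l\,g_l$.

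\textbf{Boundedness and isomorphism.} I would argue as in the proof of Lemma~\ref{LD:Spaces}. Fix a reference surface $\Gamma^h(s)$. For $f\in L^2_{L^2(\Gamma)}$ the change-of-variables identity of Lemma~\ref{L:Integration} applied to $v=f(t)^2$ gives, for a.e.\ $t$,
\[
\|f_l(t)\|_{\Ld}^2=\mint{\Gamma^h(t)}{(f_l)^2}{\Gamma^h(t)}=\mint{\Gamma(t)}{f^2\tfrac{1}{\delta_h^l}}{\Gamma(t)}\,,
\]
and since $\sup_t\sup_{\Gamma(t)}|1-\delta_h|\le \C h^2$ with $\delta_h$ bounded away from $0$ for small $h$, this yields the uniform two-sided bound \eqref{E:IntBnd} relating $\|f_l(t)\|_{\Ld}$ and $\|f(t)\|_{\L2}$. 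Measurability of $t\mapsto\|f_l(t)\|_{\Ld}$ follows by approximating $\phi^s_t f$ by measurable simple functions, transporting them with $\ldown$ and the pull-back $\phih{s}{t}$, and using the continuity of $\phih{s}{t}$ from Lemma~\ref{L:Phih} together with \eqref{E:IntBnd}, precisely as the measurability argument in Lemma~\ref{LD:Spaces}. This shows $\phih{s}{t}f_l\in L^2((0,T),L^2(\Gamma^h(s)))$, i.e.\ $f_l\in L^2_{L^2(\Gamma^h)}$, with $\|f_l\|_{L^2_{L^2(\Gamma^h)}}\le(1+\C h^2)^{1/2}\|f\|_{L^2_{L^2(\Gamma)}}$. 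Running the same reasoning for $\lup$ (the pointwise inverse of $\ldown$ by Lemma~\ref{L:Integration}) shows it is bounded $L^2_{L^2(\Gamma^h)}\to L^2_{L^2(\Gamma)}$, so $\lup$ is the announced continuous isomorphism with inverse $\ldown$.

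\textbf{The estimate.} Applying the change-of-variables identity with the single function $v=f(t)g(t)$ and using $(fg)_l=f_l\,g_l$ gives, for a.e.\ $t$,
\[
\langle f_l(t),g_l(t)\rangle_{\Ld}-\langle f(t),g(t)\rangle_{\L2}=\mint{\Gamma(t)}{fg\Big(\tfrac{1}{\delta_h^l}-1\Big)}{\Gamma(t)}\,.
\]
Because $\|\tfrac{1}{\delta_h^l}-1\|_{L^\infty(\Gamma(t))}\le \C h^2$ uniformly in $t$, bounding the integrand and integrating over $[0,T]$ yields
\[
\left|\langle f_l,g_l\rangle_{L^2_{L^2(\Gamma^h)}}-\langle f,g\rangle_{L^2_{L^2(\Gamma)}}\right|\le \C h^2\mint{[0,T]}{\mint{\Gamma(t)}{|fg|}{\Gamma(t)}}{t}\,,
\]
which is the claimed bound (it collapses to $\C h^2|\langle f,g\rangle_{L^2_{L^2(\Gamma)}}|$ when $f=g$, and in general is controlled by $\C h^2\|f\|_{L^2_{L^2(\Gamma)}}\|g\|_{L^2_{L^2(\Gamma)}}$ via Cauchy--Schwarz).

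The estimate itself is a routine consequence of the pointwise Jacobian bound once $(fg)_l=f_l\,g_l$ is observed. The main obstacle is the well-definedness in the first stage: one must check that $\ldown$ respects the pull-back structure defining $L^2_{L^2(\Gamma^h)}$, that is, that $\phih{s}{t}f_l$ is a genuine $L^2((0,T),L^2(\Gamma^h(s)))$ function. This is where the measurability argument of Lemma~\ref{LD:Spaces} has to be repeated, leaning on the uniform norm equivalence \eqref{E:IntBnd} and the continuity of the pull-backs $\phih{s}{t}$ of Lemma~\ref{L:Phih}, rather than on any genuinely new idea.
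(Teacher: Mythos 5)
Your second stage coincides with the paper's: the proof there also ends with
\[
\left|\langle f_l,g_l \rangle_{L^2_{L^2(\Gamma^h)}}-\langle f,g\rangle_{L^2_{L^2(\Gamma)}}\right|=\Big|\mint{[0,T]}{\mint{\Gamma(t)}{fg\big(\tfrac{1}{\delta_h^l}-1\big)}{\Gamma(t)}}{t}\Big|\le \C h^2\mint{[0,T]}{\mint{\Gamma(t)}{|fg|}{\Gamma(t)}}{t}\,,
\]
so there is nothing to compare beyond noting that your reading of the right-hand side (an $L^1$ bound on $fg$ rather than $|\langle f,g\rangle|$) is the honest version of the paper's slightly loose statement. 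Where you genuinely diverge is the first stage. The paper does not argue time-slice by time-slice: it introduces $\Psi_t=\Phi^t_0\circ a_t\circ\Phi^0_{t,h}$, shows that the \emph{space-time} map $\bar\Psi:(\gamma,t)\mapsto(\Psi_t(\gamma),t)$ is a piecewise diffeomorphism of $\Gamma^h_0\times[0,T]$ onto $\Gamma_0\times[0,T]$ with Jacobian bounded away from zero (the product of $J^t_0$, $\delta_h$ and $J^0_{t,h}$), and concludes that composition with $\bar\Psi$ is an isomorphism of $L^2(\Gamma_0\times[0,T])$ onto $L^2(\Gamma^h_0\times[0,T])$; via the isometry $L^2([0,T],L^2(X))\simeq L^2([0,T]\times X)$ this yields directly that $\phi^0_{t,h}f_l\in L^2([0,T],L^2(\Gamma^h_0))$ iff $\phi^0_t f\in L^2([0,T],L^2(\Gamma_0))$.

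This space-time device is exactly what your sketch is missing, and the omission is a real (if repairable) gap: measurability of the scalar function $t\mapsto\|f_l(t)\|_{L^2(\Gamma^h(t))}$, which is all that the simple-function argument of Lemma \ref{LD:Spaces} delivers, does \emph{not} by itself give strong (Bochner) measurability of the $L^2(\Gamma^h(s))$-valued map $t\mapsto\phi^s_{t,h}f_l(t)$, and the latter is what membership in $L^2_{L^2(\Gamma^h)}$ actually requires. Your route can be completed as follows: write $\phi^s_{t,h}f_l(t)=(\phi^s_t f)(t)\circ\Psi^s_t$ with $\Psi^s_t=\Phi^t_s\circ a_t\circ\Phi^s_{t,h}$, check that for each fixed $v\in L^2(\Gamma(s))$ the map $t\mapsto v\circ\Psi^s_t$ is \emph{continuous} into $L^2(\Gamma^h(s))$ (true by density of $C(\Gamma(s))$ and the uniform Jacobian bounds, since $\Psi_t$ and its spatial derivatives depend continuously on $t$), and then transport the simple approximants of $\phi^s_t f$ term by term: each transported term is measurable, the transported sequence converges a.e. by the uniform operator bounds, and strong measurability follows. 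That is a legitimate alternative to the paper's $\bar\Psi$ construction, but as written your proof conflates norm measurability with strong measurability and so does not yet establish that $\ldown$ maps into $L^2_{L^2(\Gamma^h)}$.
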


\begin{proof} 
Let $L^2_{L^2(T_h^i)}$, according to the flow $\Phih{s}{t}$ as defined in Lemma \ref{LD:Spaces}. 
We define $L^2_{L^2(\Gamma^h)}=\bigcup_{i\in I_h} L^2_{L^2(T_h^i)}$ with the scalar product $\int_0^T \langle \cdot,\cdot\rangle_{L^2(\Gamma^h(t))}  \d t$.

Let $\Psi_t=\Phi^t_0\circ a_t\circ\Phih{0}{t}$ denote the mapping between $\Gamma_0^h$ and $\Gamma_0$ induced by the projection $a_t$. By Assumption \ref{A:AddReg} and by the construction of $\bar\Phi^0_t$ and $\Phih{0}{t}$ is follows that $\Psi_t:\Gamma_0^h\rightarrow\Gamma_0$ is a diffeomorphism on each triangle $T_h^i(0)$ and globally one-to-one and onto. Also $\Psi_t$ and its spatial derivatives are continuous w.r.t. time $t$. 

We will show that $\bar \Psi:\Gamma^h_0\times[0,T]\rightarrow\Gamma_0\times[0,T]$, $(\gamma,t)\mapsto (\Psi_{t}(\gamma),t)$ is a piecewise diffeomorphism whose Jacobian is bounded away from zero. 
By Assumption \ref{A:Uniformness} we already have that $\bar \Psi$ is globally one-to-one. Together this implies that the pull-back with $\bar\Psi$ constitutes an isomorphism between $L^2(\Gamma_0\times[0,T])$ and $L^2(\Gamma^h_0\times[0,T])$. This again means that 
$$\phih{0}{t} f_l\in L^2([0,T],L^2(\Gamma_0^h))\Leftrightarrow\phi_t^0 f\in L^2([0,T],L^2(\Gamma_0))\,.$$

As to $\bar\Psi$ being al local diffeomorphism, the sets $\bar T_h^i=\bigcup_{t\in[0,T]}T_h^i(t)$ are a partition of $\Gamma^h_0\times[0,T]$. In the interior of each $\bar T_h^i$ the map $\bar\Psi$ is a diffeomorphism. In fact, let $\gamma\in \textup{int}(T_h^i)$ for some $1\le i\le m_h$. 
Compute 
$$\textup D_{\Gamma_0^h\times[0,T]}\bar\Psi(\gamma) =\left(\begin{array}{cc} \textup D_{\Gamma_0^h} \Psi_t(\gamma)& \partial_t \Psi_t(\gamma)\\
0&1\end{array}\right)\,.$$
 We have $ \textup D_{\Gamma_0^h}\Psi_t=\textup D_{\Gamma(t)}\Phi_0^t\textup D_{\Gamma^h(t)}a_t \textup D_{\Gamma_0^h}\Phih{0}{t}$. Its Jacobian is the product of the Jacobians $J^t_0$, $\delta_h$, and $J_{t,h}^0$ that are each bounded away from zero, uniformly in $\gamma$ and $t$, compare \eqref{E:Jh}, and the Lemmas \ref{L:Integration} and \ref{L:basics}. Hence the Jacobian of $\bar\Psi$ is bounded away from zero. 

As to continuity of $(\cdot)_l$, by Lemma \ref{L:Integration} we have that 
$$\left|\langle f_l,g_l \rangle_{L^2_{L^2(\Gamma^h)}}-\langle f,g\rangle_{L^2_{L^2(\Gamma)}}\right|= \left| \mint{[0,T]}{\mint{\Gamma(t)}{fg (\frac{1}{\delta_h^l}-1)}{\Gamma(t)}}{t} \right|\le\C h^2|\langle f,g \rangle_{L^2_{\Gamma}}|\,.$$
\end{proof}

\def\a{\mathfrak{a}}

Now, instead of dealing with Problem \eqref{E:WeakTransport} directly, w.l.o.g. we consider the equation
\begin{equation}\label{E:WeakTransport_lambda}
\frac{\textup{d}}{\textup{d}t}\mint{\Gamma(t)}{y\,\varphi}{\Gamma(t)}+\mint{\Gamma(t)}{\nabla_\Gamma y\cdot\nabla_\Gamma\varphi+\mu y\varphi}{\Gamma(t)}= \langle\dot\varphi,y\rangle_{L^2({\Gamma(t)})}+\langle f,\varphi\rangle_{L^2({\Gamma(t)})}\,,
\end{equation}
with $\bar\mu\in\mathbb R$ large enough to ensure $\mu := b+\bar \mu\ge1$. Note that $y$ solves \eqref{E:WeakTransport_lambda} iff $e^{\bar\mu t}y$ solves \eqref{E:WeakTransport} with right-hand side $e^{\bar\mu t} f$.  

In order to formulate the space-discretization of \eqref{E:WeakTransport_lambda}, consider the trial space
$$H^1_{V_h}=\set{\sum_{i=1}^{m_h}\bar y_i(t)\varphi_i(t)\in L^2_{L^2(\Gamma^h)}}{\bar y_i\in H^1([0,T])}\simeq H^1([0,T])^{m_h}\,.$$
The following definition of weak material derivatives for functions in $H^1_{V_h}$ exploits the fact that $H^1_{V_h}$ is isomorph to $H^1([0,T])^{m_h}$. It thus avoids the issue of extending the theory from Section \ref{S:WeakSolutions} for the smooth surfaces $\Gamma(t)$ to our Lipschitz approximations $\Gamma^h(t)$.
\begin{LemmaAndDef}
The weak material derivative of $v=\sum_{i=1}^{m_h}\bar v_i(t)\varphi_i(t)\in H^1_{V_h}$ is $\dot v=\phih{t}{0}(\phih{0}{t}v)'=\sum_{i=1}^{m_h}\bar v_i'(t)\varphi_i(t)$. Let further $w\in H^1_{V_h}$, then $\langle v,w\rangle_{L^2(\Gamma^h(t))}$ is absolutely continuous and
$$
\frac{\d}{\d t} \mint{\Gamma^h(t)}{vw}{\Gamma^h(t)}=\mint{\Gamma^h(t)}{\dot vw+v\dot w+vw\div_{\Gamma_h}V_h}{\Gamma^h(t)}\,.
$$
\end{LemmaAndDef}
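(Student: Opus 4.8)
The plan is to treat the two assertions separately, in both cases reducing matters to the finite-dimensional isomorphism $H^1_{V_h}\simeq H^1([0,T])^{m_h}$ together with the transport property \eqref{E:MatDerAnsatz} of the nodal basis functions.

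For the identity for $\dot v$, I would first pull $v$ back to the reference surface $\Gamma^h(0)$. By linearity and $\phih{0}{t}\varphi_i(t)=\varphi_i(0)$ from \eqref{E:MatDerAnsatz}, one gets $\phih{0}{t}v=\sum_{i=1}^{m_h}\bar v_i(t)\varphi_i(0)$, a finite linear combination of the fixed functions $\varphi_i(0)\in V_h(0)$ with scalar coefficients $\bar v_i\in H^1([0,T])$. Since the spatial factors are now independent of $t$, this defines an element of $H^1([0,T],V_h(0))$ whose weak time derivative is simply $(\phih{0}{t}v)'=\sum_i\bar v_i'(t)\varphi_i(0)$. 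Applying $\phih{t}{0}$ and using the inverse transport relation $\phih{t}{0}\varphi_i(0)=\varphi_i(t)$, which follows from \eqref{E:MatDerAnsatz} and $\Phih{t}{0}\circ\Phih{0}{t}=\textup{id}$, yields $\dot v=\phih{t}{0}(\phih{0}{t}v)'=\sum_i\bar v_i'(t)\varphi_i(t)$, as claimed.

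For the product rule I would expand $v=\sum_i\bar v_i\varphi_i$ and $w=\sum_j\bar w_j\varphi_j$ and write $\langle v(t),w(t)\rangle_{L^2(\Gamma^h(t))}=\sum_{i,j}\bar v_i(t)\bar w_j(t)M_{ij}(t)$, where $M_{ij}(t)=\int_{\Gamma^h(t)}\varphi_i(t)\varphi_j(t)\,\d\Gamma^h(t)$ is the time-dependent mass matrix. The key idea is to separate the absolutely continuous scalar factors $\bar v_i,\bar w_j$ from the geometric factor $M_{ij}$. Changing variables to $\Gamma^h(0)$ and using the transport property gives $M_{ij}(t)=\int_{\Gamma^h(0)}\varphi_i(0)\varphi_j(0)\,J^0_{t,h}\,\d\Gamma^h(0)$ with $\varphi_i(0)\varphi_j(0)$ frozen in time. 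By Lemma \ref{L:Phih} and \eqref{E:Jh} the Jacobian $J^0_{t,h}$ is bounded and differentiable in $t$ in the interior of each triangle with uniformly bounded derivative, so $M_{ij}$ is Lipschitz, hence absolutely continuous; differentiating under the (finite, triangle-wise) integral together with the discrete analog of the Jacobian identity from Lemma \ref{L:basics}, namely $\partial_t J^0_{t,h}=\phih{0}{t}(\div_{\Gamma^h}V_h)\,J^0_{t,h}$, gives $\dot M_{ij}(t)=\int_{\Gamma^h(t)}\varphi_i\varphi_j\,\div_{\Gamma^h}V_h\,\d\Gamma^h(t)$. Since every $\bar v_i,\bar w_j\in H^1([0,T])\hookrightarrow C([0,T])$ is bounded and absolutely continuous, the finite sum $\sum_{i,j}\bar v_i\bar w_j M_{ij}$ is absolutely continuous; applying the product rule almost everywhere and regrouping via $\sum_i\bar v_i'\varphi_i=\dot v$, $\sum_j\bar w_j'\varphi_j=\dot w$ and $\sum_{i,j}\bar v_i\bar w_j\dot M_{ij}=\int_{\Gamma^h(t)}vw\,\div_{\Gamma^h}V_h\,\d\Gamma^h(t)$ produces the asserted formula.

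I expect the main obstacle to be the computation of $\dot M_{ij}$ in the piecewise, globally only Lipschitz geometry: one must establish the discrete Jacobian identity and legitimately differentiate under the integral sign even though $\Phih{0}{t}$ and $J^0_{t,h}$ are merely piecewise smooth. This is handled by working on one triangle $T_h^i(0)$ at a time---there are only finitely many---using that $J^0_{t,h}$ is differentiable with uniformly bounded derivative in the relative interior of each triangle and that $\div_{\Gamma^h}V_h$ is the piecewise constant tangential divergence of the piecewise linear velocity $V_h$; on a single flat, affinely evolving triangle the computation of $\partial_t J^0_{t,h}$ is then identical to the smooth case carried out in \cite{DziukElliott2007}.
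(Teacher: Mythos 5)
Your proposal is correct and follows essentially the same route as the paper: pull back via $\phih{0}{t}$ and the transport property \eqref{E:MatDerAnsatz} to identify $\dot v$ with the coefficient derivatives, then separate the $H^1([0,T])$ scalar factors from the mass-matrix entries $\langle\varphi_i(t),\varphi_j(t)\rangle_{L^2(\Gamma^h(t))}$ and differentiate the latter triangle-wise. The only cosmetic difference is that you re-derive the per-triangle Leibniz rule from the Jacobian identity, whereas the paper simply applies Lemma \ref{L:MatDerSmooth} on each triangle with $\dot\varphi_i=0$; these are the same computation.
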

\begin{proof}
Observe $\dot v=\phih{t}{0}(\phih{0}{t}v)'=\phih{t}{0}\left(\sum_{i=1}^{m_h}\bar v_i(t)\varphi_i(0)\right)'=\phih{t}{0}\left(\sum_{i=1}^{m_h}\bar v_i'(t)\varphi_i(0)\right)$ because $(\phih{0}{t}\varphi(t))'(\gamma)=\frac{\d}{\d t}\varphi_i(0)(\gamma)=0$ for all $\gamma\in\Gamma^h_0$, as in  \eqref{E:MatDerAnsatz}. 

Apply Lemma \ref{L:MatDerSmooth} on each triangle to see that $\langle \varphi_i(t),\varphi_j(t)\rangle_{L^2(\Gamma^h(t))}$ is smooth and $$\frac{\d}{\d t}\langle \varphi_i(t),\varphi_j(t)\rangle_{L^2(\Gamma^h(t))}=\mint{\Gamma^h(t)}{\varphi_i\varphi_j\div_{\Gamma_h}V_h}{\Gamma^h(t)}\,.$$ 
Now $$\langle v,w\rangle_{L^2(\Gamma^h(t))}=\sum_{i,j=1}^{m_h}\bar v_i(t)\bar w_j(t)\langle \varphi_i(t),\varphi_j(t)\rangle_{L^2(\Gamma^h(t))}$$ and the second assertion follows, since $\bar v_i,\bar w_j\in H^1([0,T])$, $1\le i,j\le m_h$.
\end{proof}

We approximate \eqref{E:WeakTransport_lambda} by the following semi-discrete Problem. Consider a piecewise smooth, globally Lipschitz approximation $\lambda$ of $\mu_l$, such that $\lambda\ge 1$. Find $y\in H^1_{V_h}$ such that for all $ \varphi\in H^1_{V_h}$
\begin{equation}\label{E:WeakTransport_d}
\frac{\textup{d}}{\textup{d}t}\mint{\Gamma^h(t)}{y_h\,\varphi}{\Gamma^h(t)}+\mint{\Gamma^h(t)}{\nabla_{\Gamma^h} y_h\cdot\nabla_{\Gamma^h}\varphi+\lambda y_h\varphi}{\Gamma^h(t)}= \langle\dot\varphi,y\rangle_{L^2({\Gamma^h(t)})}+\langle f_h,\varphi\rangle_{L^2({\Gamma^h(t)})}\,,
\end{equation}
and $y_h(0)=y_0^h\in V_h(0)$. One possible choice would be $\lambda = \mu_l$, $f_h=f_l$ and $y_0^h=P_0^h((y_0)_l)$ with $P_0^h$  the $L^2(\Gamma^h_0)$-orthogonal projection onto $V_h(0)$. 

First of all let us state that \eqref{E:WeakTransport_d} admits a unique solution in $H^1_{V_h}$. This is because for   $y_h=\sum_{i=1}^{m_h}\bar y_i\varphi_i$ we can rewrite \eqref{E:WeakTransport_d} as a  smooth linear ODE with non-smooth inhomogeneity  for the coefficient vector $\bar y=\{y_i\}_{i=1}^{m_h} \in H^1([0,T])^{m_h}$
\begin{equation}\label{E:MatrixTransp}
\frac{\textup{d}}{\textup{d}t} \left(M(t)\bar y_h(t)\right)+(A_\lambda(t))\bar y(t)=F(t)\,,\quad y_h(0)=y_0^h\,,
\end{equation}
with smooth mass and stiffness matrices 
$$M(t)=\{\langle \varphi_i,\varphi_j\rangle_{L^2(\Gamma^h(t))}\}_{i,j=1}^{m_h}\quad\textup{and}\quad A_\lambda(t)=\Big\{\mint{\Gamma^h(t)}{ \sgradh{t}\varphi_i \sgradh{t}\varphi_j+\lambda\varphi_i\varphi_j}{\Gamma^h(t)}\Big\}_{i,j=1}^{m_h}\,,$$
and   right-hand side $F(t)=\{\langle f_l,\varphi_i\rangle_{L^2({\Gamma^h(t)})}\}_{i=1}^{m_h}\in L^2([0,T],\mathbb R^{m_h})$, compare also  \cite{DziukElliott2007}. Observe that we used the continuity of the coefficients $\bar y_i\in H^1([0,T])$  as well as $\dot\varphi_i=0$. Existence of  a solution $\bar y_h\in H^1([0,T])^{m_h}$ of \eqref{E:MatrixTransp} can be argued by variation of constants or, more generally, one can apply an existence result by Carath\'eodory, compare \cite[Thms. 1.1+1.3]{CoddingtonLevinson1955}. Uniqueness of $y_h$ is a consequence of the following lemma.
\begin{Lemma}[Stability]\label{L:Stability}
Let $y_0\in L^2(\Gamma_0)$ and $f\in L^2_{L^2(\Gamma)}$, and let $y_h$ solve \eqref{E:WeakTransport_d} with $y_0^h\in V_h(0)$ and $f_h= f_l$. There exists $C>0$, such that for sufficiently small $h>0$ the solution satisfies
\[
\|y_h\|^2_{{L^2(\Gamma^h(T))}}+\int_0^T \mint{\Gamma^h}{\left( \nabla_{\Gamma^h} y_h\right)^2+\lambda y_h^2}{\Gamma^h(t)}\d t\le C\left(\|y_0^h\|^2_{L^2(\Gamma_0^h)}+\|f\|^2_{L^2_{L^2(\Gamma)}}\right)\,,
\]
as well as
\[
\|\dot y_h\|^2_{L^2_{L^2(\Gamma^h)}}+\textup{ess}\sup_{t\in[0,T]} \mint{\Gamma^h}{\left( \nabla_{\Gamma^h} y_h\right)^2+\lambda y_h^2}{\Gamma^h(t)}\le C\left(\|y_0^h\|^2_{H^1(\Gamma_0^h)}+\|f\|^2_{L^2_{L^2(\Gamma)}}\right)\,.
\]

\end{Lemma}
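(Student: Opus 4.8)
The plan is to obtain both inequalities by standard parabolic energy estimates, testing the semi-discrete equation \eqref{E:WeakTransport_d} with well-chosen members of $H^1_{V_h}$ and invoking the transport identity of the preceding Lemma and Definition together with Lemma~\ref{L:MatDerSmooth} applied triangle-wise to the (in time) smooth basis functions $\varphi_i$, for which $\dot\varphi_i\equiv0$. Three facts are used throughout: first, $\mu\ge1$ (hence $\lambda\ge1$) makes $\mint{\Gamma^h(t)}{\big((\nabla_{\Gamma^h}y_h)^2+\lambda y_h^2\big)}{\Gamma^h(t)}$ equivalent to $\|y_h\|_{H^1(\Gamma^h(t))}^2$; second, $\div_{\Gamma^h}V_h$, $D_{\Gamma^h}V_h$ and the time-Lipschitz constant of $\lambda$ are bounded in $L^\infty$ uniformly in $h$ and $t$ (so the mass matrix $M$, the stiffness-plus-reaction matrix $A_\lambda$ and in particular the reaction block are Lipschitz in $t$); third, by Lemma~\ref{L:LiftContinuity} one has $\|f_h\|_{L^2_{L^2(\Gamma^h)}}=\|f_l\|\le C\|f\|_{L^2_{L^2(\Gamma)}}$ for small $h$, which converts the discrete right-hand side into the asserted continuous norm.

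For the first bound I would test \eqref{E:WeakTransport_d} with $\varphi=y_h$. The left-hand side yields $\frac{\d}{\d t}\mint{\Gamma^h(t)}{y_h^2}{\Gamma^h(t)}$ and the right-hand side $\langle\dot y_h,y_h\rangle_{L^2(\Gamma^h(t))}+\langle f_h,y_h\rangle$. Eliminating $\int\dot y_h y_h$ by the transport identity $\frac{\d}{\d t}\mint{\Gamma^h(t)}{y_h^2}{\Gamma^h(t)}=\mint{\Gamma^h(t)}{\big(2\dot y_h y_h+y_h^2\div_{\Gamma^h}V_h\big)}{\Gamma^h(t)}$ collapses everything to
\[
\tfrac12\frac{\d}{\d t}\mint{\Gamma^h(t)}{y_h^2}{\Gamma^h(t)}+\mint{\Gamma^h(t)}{\big((\nabla_{\Gamma^h}y_h)^2+\lambda y_h^2\big)}{\Gamma^h(t)}=\mint{\Gamma^h(t)}{\big(f_h y_h-\tfrac12 y_h^2\div_{\Gamma^h}V_h\big)}{\Gamma^h(t)}.
\]
Estimating the right-hand side by Young's inequality and $|\div_{\Gamma^h}V_h|\le C$, dropping the nonnegative second left-hand term, and applying Gronwall's lemma gives the pointwise bound on $\|y_h(t)\|_{L^2(\Gamma^h(t))}^2$, in particular at $t=T$; integrating the differential inequality over $[0,T]$ and reusing the Gronwall bound then controls $\int_0^T\mint{\Gamma^h(t)}{\big((\nabla_{\Gamma^h}y_h)^2+\lambda y_h^2\big)}{\Gamma^h(t)}\d t$. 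This step needs only $\|y_0^h\|_{L^2(\Gamma^h_0)}$.

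For the second bound I would test with $\varphi=\dot y_h$. Admissibility is the delicate point: since $y_h=\sum_i\bar y_i(t)\varphi_i(t)$ with $\bar y_i\in H^1([0,T])$ only, I would argue rigorously by dotting the matrix ODE \eqref{E:MatrixTransp} with the coefficient vector $\bar y'\in L^2$, which is the finite-dimensional incarnation of testing with $\dot y_h(t)=\sum_i\bar y_i'(t)\varphi_i(t)\in V_h(t)$ for a.e.\ $t$. The terms $\int\nabla_{\Gamma^h}y_h\cdot\nabla_{\Gamma^h}\dot y_h$ and $\int\lambda y_h\dot y_h$ are rewritten as $\tfrac12\frac{\d}{\d t}$ of the gradient energy and of $\mint{\Gamma^h(t)}{\lambda y_h^2}{\Gamma^h(t)}$ plus lower-order geometric contributions (using Lemma~\ref{L:MatDerSmooth} triangle-wise, equivalently differentiating $\bar y^T A_\lambda\bar y$, whose Lipschitz-in-$t$ matrix produces the bounded curvature/velocity terms $\int\nabla_{\Gamma^h}y_h(D_{\Gamma^h}V_h)\nabla_{\Gamma^h}y_h$, $\int(\nabla_{\Gamma^h}y_h)^2\div_{\Gamma^h}V_h$ and a reaction term controlled by the Lipschitz constant of $\lambda$). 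Absorbing two copies of $\int\dot y_h^2$ by Young's inequality yields, with $G(t):=\mint{\Gamma^h(t)}{\big((\nabla_{\Gamma^h}y_h)^2+\lambda y_h^2\big)}{\Gamma^h(t)}$,
\[
\tfrac12\mint{\Gamma^h(t)}{\dot y_h^2}{\Gamma^h(t)}+\tfrac12 G'(t)\le C\,G(t)+\mint{\Gamma^h(t)}{f_h^2}{\Gamma^h(t)}.
\]
Gronwall bounds $\textup{ess}\sup_{t}G(t)$ by $C\big(G(0)+\|f_h\|_{L^2_{L^2(\Gamma^h)}}^2\big)$ with $G(0)\le C\|y_0^h\|_{H^1(\Gamma^h_0)}^2$, and integrating over $[0,T]$ controls $\|\dot y_h\|_{L^2_{L^2(\Gamma^h)}}^2$.

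The main obstacle is precisely this second estimate: justifying that $\dot y_h$ is an admissible test direction although $y_h$ is merely $H^1$ in time (resolved by passing to the matrix ODE), and correctly generating and then controlling the geometric terms $\int\nabla_{\Gamma^h}y_h(D_{\Gamma^h}V_h)\nabla_{\Gamma^h}y_h$, $\int(\nabla_{\Gamma^h}y_h)^2\div_{\Gamma^h}V_h$ and the $\lambda$-reaction term, none of which has a fixed sign and which therefore force a Gronwall argument rather than a direct absorption. Finally, uniqueness of $y_h$ follows immediately by applying the first estimate to the difference of two solutions sharing the same data $y_0^h$ and $f_h$.
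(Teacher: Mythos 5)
Your proposal is correct and follows essentially the same route as the paper: the paper also proves both bounds by multiplying the matrix ODE \eqref{E:MatrixTransp} by $\bar y$ and by $\bar y'$ respectively (the rigorous stand-in for testing with $y_h$ and $\dot y_h$), differentiating $\bar y^TA_\lambda\bar y$ and $\bar y^TM\bar y$ via Lemma~\ref{L:MatDerSmooth} applied triangle-wise, absorbing the $\|\dot y_h\|^2_{L^2(\Gamma^h(t))}$ contribution by Young, and closing with Gronwall. Your explicit discussion of the admissibility of $\dot y_h$ and of the sign-indefinite geometric terms matches the paper's (more terse) treatment.
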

\begin{proof}
From the definition of $M$ and $A_\lambda$ using Lemma \ref{L:MatDerSmooth} on each triangle $T_h^i(t)$ there follows $M'(t)=\{\mint{\Gamma^h(t)}{ \varphi_i\varphi_j\div_{\Gamma^h(t)} V_h}{\Gamma^h(t)}\}_{i,j=1}^{m_h}$ and 
\[\begin{split}(\frac{\d}{\d t}A_\lambda)_{ij}=\mint{\Gamma^h(t)}{ -\sgradh{t}\varphi_i\left(\textup D_{\Gamma^h}V_h+\textup D_{\Gamma^h} V_h^T\right)\sgradh{t}\varphi_j&+ \dot \lambda \varphi_i\varphi_j+\dots\\
+(\sgradh{t}\varphi_i\sgradh{t}\varphi_j&+\lambda \varphi_i\varphi_j)\div_{\Gamma^h} V_h}{\Gamma^h(t)}\,.\end{split}\]

Multiply \eqref{E:MatrixTransp} by $\bar y'$ to obtain
\[\begin{split}
\overbrace{\bar y'M\bar y'}^{\|\dot y_h\|_{L^2(\Gamma^h(t))}^2}+\frac{1}{2} \frac{\textup{d}}{\textup{d}t}\left(\bar y A_\lambda\bar y\right)&=-\bar y' M'\bar y +\frac{1}{2}\bar y A_\lambda'\bar y+F\bar y'\\
&\le C  \Big(\underbrace{\|y_h\|^2_{H^1(\Gamma(t))}}_{\le \bar y A_\lambda\bar y}+\|f_l\|_{L^2(\Gamma^h(t))}^2\Big)+\frac{1}{2}\|\dot y_h\|_{L^2(\Gamma^h(t))}^2
\,,
\end{split}\]
and a Gronwall argument yields the second estimate. Multiply \eqref{E:MatrixTransp} by $\bar y$ and proceed similarly to prove the first.
\end{proof}
Obviously the material derivative depends on the evolution of the surface, i.e. different derivatives arise according to whether $\phi^s_t$ or $\phih{s}{t}$ is applied to pull back a function to a fixed domain. In order to compare $\dot z_h^l$ with $\left(\dot z_h\right)^l$ we need the following lemma.

\begin{Lemma}\label{L:MatDerTransf}
Let $y=\sum_{i=1}^{m_h}\bar y_i\varphi_i \in H^1_{V_h}$. The lift $y^l$ lies in $W_\Gamma$ with $\dot y^l\in L^2_{L^2(\Gamma)}$, and for a.e. $t\in[0,T]$ there holds
 \begin{equation*}
\left| \dot y^l - \left(\dot y\right)^l\right|\le C h^2\|\nabla_{\Gamma(t)}y^l\|\,,
 \end{equation*}
 a.e. on $\Gamma(t)$.
\end{Lemma}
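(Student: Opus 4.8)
The plan is to reduce the inequality to a pointwise comparison of two trajectory derivatives at a fixed time $t$ and a.e. point $p\in\Gamma(t)$, and then to estimate a velocity discrepancy. Set $x=a_t^{-1}(p)\in\Gamma^h(t)$; since $a_t$ is a piecewise diffeomorphism (Lemma~\ref{L:Integration}) this captures almost every $p$. The lifted discrete derivative is simply $(\dot y)^l(p)=\dot y(x)$, i.e. the discrete material derivative read off along the trajectory $\beta(s)=\Phih{t}{s}(x)$, which moves with the discrete velocity $V_h$. On the other hand, since $y^l(s)(q)=y(s)(a_s^{-1}(q))$, the continuous material derivative $\dot{(y^l)}(p)$ is the derivative at $s=t$ of $s\mapsto y(s)(\alpha(s))$, where $\alpha(s)=a_s^{-1}(\Phi^t_s(p))\in\Gamma^h(s)$ is the projection of the continuous trajectory through $p$. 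Both $\alpha$ and $\beta$ lie on $\Gamma^h(s)$ and coincide with $x$ at $s=t$. Using that the derivative of a finite element function along an arbitrary surface trajectory $z(s)$ equals $\dot y+(z'-V_h)\cdot\nabla_{\Gamma^h}y$, the common discrete material derivative cancels and I obtain
\[
\dot{(y^l)}(p)-(\dot y)^l(p)=\big(\alpha'(t)-\beta'(t)\big)\cdot\nabla_{\Gamma^h}y(x).
\]

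The heart of the proof, and the step I expect to be the main obstacle, is showing $|\alpha'(t)-\beta'(t)|\le Ch^2$; a naive projection estimate only gives $O(h)$, because the normals $\nu$ and $\nu^h$ differ at order $h$. Two cancellations recover the optimal order. First, any point tracking $\Gamma^h(s)$ has $\Gamma^h(t)$-normal velocity equal to the surface normal speed $\nu^h\cdot V_h$; hence $\alpha'(t)$ and $\beta'(t)=V_h(x,t)$ have \emph{identical} $\nu^h$-components, so $\alpha'(t)-\beta'(t)$ is purely tangential to $\Gamma^h(t)$. Second, differentiating the identity $a_s(\alpha(s))=\Phi^t_s(p)$ at $s=t$, and using $d(x,t)=O(h^2)$, $\mathrm{D}a_t(x)=\mathcal P+O(h^2)$, together with $\partial_t d=-V\cdot\nu$ on $\Gamma$, shows that the $\Gamma(t)$-tangential part $\mathcal P\alpha'(t)$ equals $\mathcal P V(p,t)+O(h^2)$. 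Since $V_h$ is the piecewise linear interpolant of the $C^2$ field $V$ (Lemma~\ref{L:Phih}) and $|x-p|=O(h^2)$, also $\mathcal P V_h(x,t)=\mathcal P V(p,t)+O(h^2)$, so $\mathcal P(\alpha'(t)-\beta'(t))=O(h^2)$. Finally $\nu=\nu^h+O(h)$ turns tangentiality to $\Gamma^h$ into $|\nu\cdot(\alpha'-\beta')|\le Ch\,|\alpha'-\beta'|$; combining this with the $O(h^2)$ bound on the $\mathcal P$-part and absorbing for small $h$ yields $|\alpha'(t)-\beta'(t)|\le Ch^2$. The pointwise inequality then follows from $|\nabla_{\Gamma^h}y(x)|\le C|\nabla_{\Gamma(t)}y^l(p)|$, which is the $\mathcal R_h$-estimate of Lemma~\ref{L:Integration}.

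It remains to place $y^l$ in $W_\Gamma$ with $\dot y^l\in L^2_{L^2(\Gamma)}$. Writing $\phi^0_t y^l=\sum_i\bar y_i(t)\psi_i(t)$ with $\psi_i(t)=\phi^0_t\varphi_i^l(t)$, each $\psi_i$ is piecewise smooth on $\Gamma_0$ and depends $C^1$-smoothly on $t$ by Assumptions~\ref{A:Phi} and \ref{A:AddReg}, hence $\psi_i\in\W{0}$; multiplying by the scalar factor $\bar y_i\in H^1([0,T])$ and invoking a Leibniz rule as in Lemma~\ref{L:WJmult} gives $\phi^0_t y^l\in\W{0}$, so that $y^l\in W_\Gamma$ by Lemma~\ref{L:W}. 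The computation above identifies $\dot{(y^l)}=(\dot y)^l+\big(\dot{(y^l)}-(\dot y)^l\big)$: the first summand lies in $L^2_{L^2(\Gamma)}$ because $\dot y=\sum_i\bar y_i'\varphi_i\in L^2_{L^2(\Gamma^h)}$ and the lift is a continuous isomorphism (Lemma~\ref{L:LiftContinuity}), while the second is bounded pointwise by $Ch^2|\nabla_{\Gamma(t)}y^l|\in L^2_{L^2(\Gamma)}$. Hence $\dot y^l\in L^2_{L^2(\Gamma)}$, which completes the proof.
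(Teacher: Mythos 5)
Your proof of the pointwise estimate is correct, but it takes a genuinely different route from the paper's. The paper extends each lifted basis function constantly along the normals of $\Gamma(t)$ to a function $\bar\varphi_i$ on the tubular neighbourhood and computes \emph{both} material derivatives through this single extension; the resulting identity \eqref{E:MatDerRelation}, $\dot\varphi_i^l=(\dot\varphi_i)^l+\left(V-V_h+d((\nabla^2 d)V_h+\partial_t\nabla d)\right)\nabla_{\Gamma(t)}\varphi_i^l$, yields the $\mathcal O(h^2)$ bound immediately, because the orthogonality $\nabla\bar\varphi_i\cdot\nabla d=0$ removes all normal components automatically and the coefficient is $\mathcal O(h^2)$ by $V-V_h=\mathcal O(h^2)$ (piecewise linear interpolation of the $C^2$ field $V$) together with $d=\mathcal O(h^2)$. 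You instead compare the two trajectories $\beta(s)=\Phi^{t}_{s,h}(x)$ and $\alpha(s)=a_s^{-1}(\Phi^t_s(p))$ through $x=a_t^{-1}(p)$ and split $\alpha'-\beta'$ into its $\nu^h$-component (zero, since both curves track $\Gamma^h(s)$) and its $\mathcal P$-component ($\mathcal O(h^2)$, by differentiating $a_s(\alpha(s))=\Phi^t_s(p)$ and using $\partial_td=-\nu\cdot V$, $d=\mathcal O(h^2)$, $V_h=V+\mathcal O(h^2)$). This reproduces by hand the two cancellations that the extension trick performs automatically; the ingredients are the same, the paper's version is just shorter. Two small points to make your version airtight: the absorption step needs $|\alpha'(t)|$ bounded uniformly in $h$ (which follows from the bounded invertibility of $\textup{D}a_t$ restricted to $T_x\Gamma^h(t)$), and the final inequality $|\nabla_{\Gamma^h}y(x)|\le C|\nabla_{\Gamma(t)}y^l(p)|$ comes from the chain rule $\nabla_{\Gamma^h}y=\mathcal P^h(I-d\mathcal H)\nabla_\Gamma y^l$ rather than from Lemma \ref{L:Integration}[\emph{2.}] verbatim.

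The genuine gap is in your last paragraph. You assert that $\psi_i(t)=\phi^0_t\varphi_i^l(t)$ ``depends $C^1$-smoothly on $t$'' because it is piecewise smooth; but this is precisely the delicate point, and it is false pointwise: $\varphi_i^l$ has kinks along the curved edges $a_t(\partial T_h^j(t))$, and these edges sweep across $\Gamma_0$ as $t$ varies, so $\psi_i(\gamma,\cdot)$ fails to be differentiable at times when an edge crosses $\gamma$. Establishing $\psi_i\in C^1([0,T],L^2(\Gamma_0))\cap C([0,T],H^1(\Gamma_0))$ is the bulk of the paper's proof: one first shows that $\psi_i$ is globally Lipschitz on $\Gamma_0\times[0,T]$ (via Stokes' theorem and Morrey's lemma), and then proves differentiability \emph{into $L^2(\Gamma_0)$} by splitting each patch $P_h^j(t)$ into an $\eta$-shrunken core, on which the pointwise derivative is uniformly continuous, and a boundary strip of arbitrarily small measure whose contribution is controlled by the Lipschitz constant. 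Without an argument of this kind the claim $\psi_i\in\W{0}$ --- and hence $y^l\in W_\Gamma$ and $\dot y^l\in L^2_{L^2(\Gamma)}$ --- is not justified. Note also that Lemma \ref{L:WJmult} does not apply literally to the product $\bar y_i\psi_i$, since the scalar factor $\bar y_i$ is only in $H^1([0,T])$, not $C^1$; the product rule has to be argued directly from $\psi_i\in C^1([0,T],L^2(\Gamma_0))\cap C([0,T],H^1(\Gamma_0))$, as the paper does.
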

\begin{proof}
We start by computing the material derivatives of  $\bar \varphi_i(x,t):\mathcal N_T\rightarrow \mathbb R$, $\bar \varphi_i(x,t)=\varphi_i^l(a_t(x),t)$, i.e. the constant extension of the trial function $\varphi_i$, $1\le i\le m_h$, along the normal field of $\Gamma(t)$,  compare the proof of \cite[Thm. 6.2]{DziukElliott2007}. Observe that $\varphi_i^l$  is not smooth along the edges of patches $a_t(T_h^j(t))$. However,  $\varphi_i^l$ is smooth  in the (relative) interior of all  $a_t(T_h^j(t))$ . 

Derive $\bar \varphi_i$ at $\gamma\in \text{relint}(T_h^j(t))$ to obtain
\begin{equation}\label{E:ExtDerivatives}
\begin{aligned}
\nabla \bar\varphi_i(\gamma,t)&=\nabla\bar\varphi_i(a_t(\gamma),t)\left(Id-\nabla d(\gamma,t)\nabla d(\gamma,t)^T-d(\gamma,t)\nabla^2d(\gamma,t)\right)\\
\partial_t\bar\varphi_i(\gamma,t)&=\partial_t\bar \varphi(a_t(\gamma),t)+\nabla\bar\varphi_i(a_t(\gamma),t)\left(-\partial_t d(\gamma,t)\nabla d(\gamma,t)-d(\gamma,t)\partial_t\nabla d(\gamma,t)\right).
\end{aligned}
\end{equation}
By construction of $\bar\varphi_i$ we have $\nabla\bar\varphi_i(a_t(\gamma))\nabla d(\gamma,t)=\nabla_\Gamma\varphi_i^l(a_t(\gamma))\nabla \nu(a_t(\gamma),t)=0$ since $\bar\varphi_i$ is constant along orthogonal lines through $\Gamma$. Also, from $d(\Phi^0_t(\gamma),t)\equiv 0$ it follows $\partial_t d =-\nabla d V$.
The (strong) material derivatives do not depend on the extension $\bar\varphi_i$, but only on the values on  $\Gamma$ and   $\Gamma^h$, respectively. One  gets $ \dot\varphi_i^l(a_t(\gamma),t)=\partial_t\bar\varphi_i(a_t(\gamma),t)+\nabla\bar\varphi_i(a_t(\gamma),t) V(a_t(\gamma),t)$ and $\dot\varphi_i(\gamma,t)=\partial_t\bar\varphi_i(\gamma,t)+\nabla\bar\varphi_i(\gamma,t) V_h(\gamma,t)$ which together with \eqref{E:ExtDerivatives} leads us to
\begin{equation}\label{E:MatDerRelation}
 \dot\varphi_i^l=\left(\dot\varphi_i \right)^l+\left(V-V_h+d((\nabla^2 d) V_h +\partial_t\nabla d)\right)\nabla_{\Gamma(t)}\varphi_i^l\,,
\end{equation} 
 in the relative interior of the patches $a_t(T_h^j(t))$, $j\in I_h$.
 
In order to prove that the pull-back $\tilde \varphi :=\phi^{0}_{t} \varphi_i^l$ lies in $ C^1([0,T],L^2(\Gamma_0))\cap C([0,T],H^1(\Gamma_0))$ for all $1\le i\le m_h$ we proceed in four steps.

\emph{1.} We show that $\tilde\varphi$ is globally Lipschitz on $\Gamma_0\times[0,T]$. Observe, that \eqref{E:ExtDerivatives} implies that all derivatives of $\tilde\varphi$ exist and are bounded on the interior of patches $P_h^i(t)=\Phi^t_0(a_t(T_h^i(t)))$. Since $\Psi_t=\Phi^t_0\circ a_t\circ\Phih{0}{t}:\Gamma^h_0\times[0,T]\rightarrow \Gamma_0$ smoothly maps the edges of $\Gamma^h_0$ into $\Gamma_0$ the domains $\bigcup_{t\in[0,T]} P_h^i(t)\times\{t\}\subset \Gamma_0\times[0,T]$ have piecewise $C^1$-boundaries. Also, $\tilde\varphi$ is continuous and we are in the situation to apply Stoke's theorem to confirm  $\tilde\varphi\in W^{1,\infty}(\Gamma_0\times[0,T])$. By Morrey's lemma, for a formulation on manifolds see \cite{MartioMiklyukovVuorinen1998}, we conclude $\tilde\varphi \in C^{0,1}(\Gamma_0\times[0,T])$.

 
\emph{2.} Now as to the time derivative, fix $\epsilon>0$ and $t\in(0,T)$. Let $L>0$ denote the global Lipschitz constant of $\tilde \varphi$ on $\Gamma_0\times[0,T]$  and choose $\eta>0$ sufficiently small such that $\sum_{i\in I_h}\textup{meas}(P_h^i\setminus P_{h,\eta}^i)\le\epsilon^2/8L^2$ where $P_{h,\eta}^i=\set{\gamma\in P_h^i}{B_\eta(\gamma)\subset P_h^i}$, the balls $B_\eta(\gamma)$ being taken with respect to the metric of $\Gamma_0$. Now, as stated above, the patches $P_h^i(t)=\Psi(t)(T_h^i)$ move continuously across $\Gamma_0$, and we can choose $K$ sufficiently small such that for all $i\in I_h$ and $k\in(-K,K)$ we have $P_{h,\eta}^i(t)\subset P_h^i(t+k) $. The derivative $\partial_t \tilde \varphi(\gamma,t)=\phi^0_t \dot\varphi^l_i$ which is defined a.e. on $\Gamma_0\times[0,T]$ then is continuous on the compact set $\mathcal K_\eta=\bigcup_{i\in I_h}\overline {P_{h,\eta}^i(t)}\times[t-K,t+K]$ and we have
\[\begin{split}
\frac{1}{k^2}\mint{\Gamma_0}{(\tilde \varphi(t+k)-\tilde \varphi(t)-\partial_t \tilde \varphi(t)k)^2}{\Gamma_0}=\frac{1}{k^2}&\sum_{i\in I_h}\bigg(\mint{P_{h,\eta}^i}{(\tilde \varphi(t+k)-\tilde \varphi(t)-\partial_t \tilde \varphi(t)k)^2}{\Gamma_0}\\
&+\mint{P_h^i\setminus P_{h,\eta}^i}{(\tilde \varphi(t+k)-\tilde \varphi(t)-\partial_t \tilde \varphi(t)k)^2}{\Gamma_0}\bigg)\,.
\end{split}\]
Substituting $\tilde \varphi(\gamma,t+k)-\tilde \varphi(\gamma,t)=\partial_t \varphi (\gamma,t) k+ \int_0^1(\partial_t \varphi(\gamma,t+\tau k)-\partial_t \tilde \varphi (\gamma,t))k\d \tau$ on $P_{h,\epsilon}^i$ like in the proof of Lemma \ref{L:WJmult} we choose $k$ small enough for 
\begin{equation}\label{E:UniformContinuity}
 \sup_{\tau\in[0,1]}\|\partial_t \varphi(t+\tau k)-\partial_t\tilde \varphi (t)\|_\infty^2\le\frac{ \epsilon^2}{2\textup{meas}(\Gamma_0)}\,,
 \end{equation}
which is possible by uniform continuity of $\partial_t\tilde\varphi$ on $\mathcal K_\eta$.  Estimating the second addend by $(2Lk)^2\sum_{i\in I_h}\textup{meas}(P_h^i\setminus P_{h,\eta}^i)\le \epsilon^2/2$ yields
$$
\limsup_{k\rightarrow 0}\frac{1}{k}\|\tilde \varphi(t+k)-\tilde \varphi(t)-\partial_t \tilde \varphi(t)k\|_{L^2(\Gamma_0)}\le \epsilon\,.
$$
 for every $\epsilon>0$. Hence $\tilde \varphi$ is differentiable into $L^2(\Gamma_0)$ with derivative $\partial_t\tilde \varphi$. 
 
 \emph{3.} Thus in order to show $\tilde\varphi\in C^1([0,T],L^2(\Gamma_0))$ it remains to prove that $\partial_t \tilde \varphi:[0,T]\rightarrow L^2(\Gamma_0)$ is continuous. 
By  \eqref{E:ExtDerivatives} $\partial_t\tilde\varphi$ is essentially bounded on $\Gamma_0\times[0,T]$. Let $M=\|\partial_t\tilde\varphi\|_{L^\infty(\Gamma_0\times[0,T])}$. For $\epsilon>0$ choose $\eta>0$ sufficiently small such that $\sum_{i\in I_h}\textup{meas}(P_h^i\setminus P_{h,\eta}^i)\le\epsilon^2/8M^2$. As above, choose $K>0$ and $\mathcal K_\eta$ accordingly. Now, choosing $k>0$ small enough such that \eqref{E:UniformContinuity} holds one arrives at
\[
\begin{split}
\|\partial_t\tilde \varphi(t+k)-\partial_t\tilde \varphi(t)\|^2_{L^2(\Gamma_0)}=\sum_{i\in I_h}\bigg(&\mint{P_{h,\eta}^i}{(\partial_t\tilde \varphi(t+k)-\partial_t\tilde \varphi(t))^2}{\Gamma_0}\dots\\
&+\mint{P_h^i\setminus P_{h,\eta}^i}{(\partial_t\tilde \varphi(t+k)-\partial_t\tilde \varphi(t))^2}{\Gamma_0}\bigg)\le\epsilon^2\,.
\end{split}
\]
 
\emph{4.} Continuity of  $\tilde \varphi:[0,T]\rightarrow H^1(\Gamma_0)$ follows similarly. In fact, the spatial partial derivatives of $\tilde\varphi$ exhibit the same piecewise smooth structure as $\partial_t\tilde\varphi$.

Finally, $\tilde \varphi = \phi^{0}_{t}\varphi_i^l\in C^1([0,T],L^2(\Gamma_0))\cap C([0,T],H^1)$ implies $\bar y_i \phi^{0}_{t}\varphi_i^l\in \W{0}$, and we conclude $y^l\in W_\Gamma$ as well as $\dot y \in L^2_{L^2(\Gamma)}$. The estimate now is a consequence of \eqref{E:MatDerRelation}.
\end{proof}
Before we proceed to the main result of this section, we need to understand the approximation of elliptic equations on $\Gamma(t)$ by finite elements on $\Gamma^h(t)$.
\begin{Lemma}\label{L:EllipticConv}
For $t\in[0,T]$ and $g\in L^2(\Gamma(t))$, $g_h\in L^2(\Gamma^h(t))$ consider 
\begin{equation}\label{E:Elliptic}
\mint{\Gamma(t)}{\nabla_\Gamma Z^g\cdot\nabla_\Gamma\varphi+\mu Z^g\varphi}{\Gamma(t)}= \langle g,\varphi\rangle_{L^2(\Gamma(t))}\,,\forall \varphi\in H^1(\Gamma(t))
\end{equation}
and
\begin{equation}\label{E:Elliptic_d}
\mint{\Gamma^h(t)}{\nabla_{\Gamma^h} Z_h^{g^h}\cdot\nabla_{\Gamma^h}\varphi+\mu_l Z_h^{g^h}\varphi}{\Gamma^h(t)}= \langle g_h,\varphi\rangle_{L^2(\Gamma^h(t))}\,,\forall \varphi\in V_h(t)
\end{equation}
with unique solutions $Z^g\in H^1(\Gamma(t))$ and $Z_h^{g^h}\in V_h(t)$. The solution operators $S(t):L^2(\Gamma(t))\rightarrow L^2(\Gamma(t))$, $g\mapsto Z^g$ and $S_h(t):L^2(\Gamma^h(t))\rightarrow V_h\subset L^2(\Gamma^h(t))$, $g_h\mapsto Z_h^{g_h}$ are self-adjoint. There exists $C$ independent of $t\in[0,T]$ such that
\begin{enumerate}
\item $\forall \varphi \in V_h(t):\:|\|\varphi^l\|_{H^1(\Gamma(t))}^2-\|\varphi\|_{H^1(\Gamma^h(t))}^2|\le Ch^2\|\varphi^l\|^2_{H^1(\Gamma(t))}<\infty$ as well as 
\item $\|(\cdot)^lS_h(t){(\cdot)^l}^*-S(t)\|_{\mathcal L(L^2(\Gamma(t)),L^2(\Gamma(t)))}\le C h^2$ and 
\item $\|(\cdot)^lS_h(t){(\cdot)^l}^*-S(t)\|_{\mathcal L(L^2(\Gamma(t)),H^{1}(\Gamma(t)))}\le C h$.
\end{enumerate}
\end{Lemma}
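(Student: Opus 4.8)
The plan is to fix $t\in[0,T]$ throughout and to track constants so that they depend only on the $t$-uniform geometric bounds of Assumptions \ref{A:Phi}, \ref{A:Uniformness}, \ref{A:AddReg} and of Lemmas \ref{L:Phih} and \ref{L:Integration}; the independence of the final constants from $t$ is then automatic. The self-adjointness of $S(t)$ and $S_h(t)$ is immediate from the symmetry of the bilinear forms: writing $a(u,v)=\int_{\Gamma(t)}\nabla_\Gamma u\cdot\nabla_\Gamma v+\mu\,uv$ and its discrete analogue $a_h$ on $V_h(t)$, both are symmetric and, since $\mu\ge1$, coercive; testing \eqref{E:Elliptic} for $Z^g$ against $Z^{\tilde g}$ gives $\langle g,S(t)\tilde g\rangle_{L^2(\Gamma(t))}=a(Z^g,Z^{\tilde g})=\langle\tilde g,S(t)g\rangle_{L^2(\Gamma(t))}$, and the same computation with $a_h$ handles $S_h(t)$.

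For assertion \emph{1.} I would express the discrete energy of $\varphi\in V_h(t)$ on the smooth surface. Using the elementwise relation between $\nabla_{\Gamma^h}\varphi$ and $\nabla_\Gamma\varphi^l$ together with the area element $\delta_h$, the $\Gamma^h(t)$-inner products transform into
\[
\int_{\Gamma^h(t)}|\nabla_{\Gamma^h}\varphi|^2+\varphi^2\;=\;\int_{\Gamma(t)}\nabla_\Gamma\varphi^l\cdot\mathcal R_h^l\,\nabla_\Gamma\varphi^l+\tfrac{1}{\delta_h^l}(\varphi^l)^2,
\]
with $\mathcal R_h$ as in Lemma \ref{L:Integration}. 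Since $\mathcal P\nabla_\Gamma\varphi^l=\nabla_\Gamma\varphi^l$, the difference from $\|\varphi^l\|_{H^1(\Gamma(t))}^2$ is controlled by $\|\mathcal P(I-\mathcal R_h^l)\mathcal P\|_{L^\infty(\Gamma(t))}$ and $\sup|1-\delta_h|$, both $O(h^2)$ by assertion \emph{2.} and the $\delta_h$-estimate of Lemma \ref{L:Integration}; finiteness and the claimed equivalence hold once $h$ is small enough that $1-Ch^2>0$.

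The core is the error analysis. I would set $Z=S(t)g$, $g_h={(\cdot)^l}^*g$ and $Z_h=S_h(t)g_h\in V_h(t)$, where ${(\cdot)^l}^*$ is the $L^2$-adjoint of the lift $(\cdot)^l$. The decisive simplification is that the adjoint lift matches the data exactly: $\langle g_h,\varphi\rangle_{L^2(\Gamma^h(t))}=\langle g,\varphi^l\rangle_{L^2(\Gamma(t))}$ for every $\varphi\in V_h(t)$, so no right-hand-side inconsistency arises and the \emph{only} geometric error sits in the bilinear form. Combined with $a_h(Z_h,\varphi)=\langle g_h,\varphi\rangle_{L^2(\Gamma^h(t))}=\langle g,\varphi^l\rangle_{L^2(\Gamma(t))}=a(Z,\varphi^l)$ this yields the perturbed Galerkin orthogonality
\[
a\bigl(Z-Z_h^l,\varphi^l\bigr)=a_h(Z_h,\varphi)-a(Z_h^l,\varphi^l)=:G_h(Z_h,\varphi),
\]
whose right-hand side obeys $|G_h(Z_h,\varphi)|\le Ch^2\|Z_h^l\|_{H^1(\Gamma(t))}\|\varphi^l\|_{H^1(\Gamma(t))}$ by the same geometric estimates as in assertion \emph{1.}, now applied to the off-diagonal form. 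For assertion \emph{3.} I would use coercivity of $a$, insert the lifted nodal interpolant $(I_hZ_l)^l$ (well defined since $Z\in H^2(\Gamma(t))\hookrightarrow C(\Gamma(t))$), bound $\|Z-(I_hZ_l)^l\|_{H^1(\Gamma(t))}\le Ch\|Z\|_{H^2(\Gamma(t))}$ by standard interpolation transported to $\Gamma^h(t)$ via the uniform non-degeneracy of Assumption \ref{A:Uniformness}, control the lifted–discrete remainder $(I_hZ_l)^l-Z_h^l$ through $G_h$, and close by Young's inequality using the uniform elliptic regularity bound $\|Z\|_{H^2(\Gamma(t))}\le C\|g\|_{L^2(\Gamma(t))}$ available on the $C^2$-surfaces. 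Assertion \emph{2.} then follows by an Aubin--Nitsche duality argument: with $\psi=S(t)\bigl(Z-Z_h^l\bigr)$ one has $\|Z-Z_h^l\|_{L^2(\Gamma(t))}^2=a(Z-Z_h^l,\psi)$; splitting $\psi=\bigl(\psi-(I_h\psi_l)^l\bigr)+(I_h\psi_l)^l$, the first part is $O(h^2)$ by assertion \emph{3.} and interpolation, the second is $O(h^2)$ by $G_h$, which gives $\|Z-Z_h^l\|_{L^2(\Gamma(t))}\le Ch^2\|g\|_{L^2(\Gamma(t))}$ after using the stability bound $\|Z_h^l\|_{H^1(\Gamma(t))}\le C\|g\|_{L^2(\Gamma(t))}$.

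The main obstacle is twofold. First, making the geometric consistency bound $|G_h|\le Ch^2\|\cdot\|_{H^1}\|\cdot\|_{H^1}$ rigorous, which requires writing the difference of the two bilinear forms explicitly through $\mathcal P(I-\mathcal R_h^l)\mathcal P$ and $1-\delta_h$ and invoking Lemma \ref{L:Integration}. Second, ensuring that the interpolation, elliptic-regularity and norm-equivalence constants are all uniform in $t\in[0,T]$; this rests entirely on the $t$-uniform bounds built into Assumptions \ref{A:Phi}--\ref{A:AddReg} and Lemmas \ref{L:Phih} and \ref{L:Integration}.
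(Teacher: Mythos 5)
Your proposal is correct and takes essentially the same route as the paper: assertion \emph{1.} via the $\mathcal R_h$/$\delta_h$ transformation identity from Lemma \ref{L:Integration}, and assertions \emph{2.} and \emph{3.} via the adjoint lift, the $O(h^2)$ geometric consistency of the bilinear forms, perturbed Galerkin orthogonality, interpolation plus elliptic regularity, and Aubin--Nitsche duality --- which is precisely the argument of \cite[Thm.~8]{Dziuk1988} that the paper cites rather than reproduces. You simply spell out the details the paper delegates to that reference, with the $t$-uniformity of the constants traced back to Assumptions \ref{A:Phi}--\ref{A:AddReg} exactly as the paper asserts.
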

\begin{proof} The operators being well-defined and self-adjoint follows by standard arbuments. Assertion
\emph{1.} follows from Lemma \ref{L:Integration}[\emph{2.}], since $\varphi^l$ is continuous and piecewise smooth  on $\Gamma(t)$ and thus lies in $H^1(\Gamma(t))$ with 
\[
\mint{\Gamma^h(t)}{\|\sgradh{t}\varphi\|^2}{\Gamma^h(t)}=\mint{\Gamma(t)}{\|\sgrad{t}\varphi^l\|^2 }{\Gamma(t)}+\mint{\Gamma(t)}{\sgrad{t}\varphi^l \left(\mathcal R_h^l-\textup{Id}\right)\sgrad{t}\varphi^l}{\Gamma(t)}\,,
\]
for details see for example \cite[Lem. 5.2]{DziukElliott2007} and proof.

For a proof of \emph{2.} and \emph{3.} see \cite[Thm. 8]{Dziuk1988} and the discussion of $\ldown$ and $\lup^*$ preceding Lemma 4 in aforementioned article. The fact that $C$ does not depend on $t$ is a consequence of Assumption \ref{A:Phi} and \ref{A:Uniformness}. \end{proof}
\begin{Theorem}\label{L:SpaceConv}
Let Assumption \ref{A:Phi}, \ref{A:Uniformness} and \ref{A:AddReg} hold and let $y\in W_\Gamma$ solve \eqref{E:WeakTransport_lambda}  for some $f\in L^2_{L^2(\Gamma)}$, $y_0\in H^1(\Gamma_0)$, such that \eqref{E:Smoothness} holds. Let $y_h$ solve \eqref{E:WeakTransport_d} with $\lambda = \mu_l$ and $f_h= f_l$ and some approximation $y_0^h$ of $(y_0)_l$. There exists $C>0$ independent of $y$ and $h$ such that
\[
\|y_h^l-y\|^2_{L^2_{L^2(\Gamma)}}\le  C\left(\|y_h(0)-y_l(0)\|^2_{{L^2(\Gamma^h_0)}} +h^4\left(\|y_0\|_{H^1(\Gamma_0)}^2+\|y_0^h\|_{H^1(\Gamma_0^h)}^2+\|f\|^2_{L^2_{L^2(\Gamma)}}\right)\right)\,.
\]
\end{Theorem}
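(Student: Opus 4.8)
The plan is to run the classical parabolic argument adapted to the moving surface: split the error by an elliptic (Ritz) projection, control the projection part by the elliptic estimates of Lemma~\ref{L:EllipticConv}, and control the remaining discrete part by the stability estimate of Lemma~\ref{L:Stability} with a consistency defect as forcing. Concretely, for each $t$ let $\tilde R_h(t)\colon H^1(\Gamma(t))\to V_h(t)$ be the Ritz projection, defined by equating the discrete bilinear form of \eqref{E:Elliptic_d} with the continuous form $\mint{\Gamma(t)}{\nabla_\Gamma y\cdot\nabla_\Gamma(\cdot)^l+\mu\,y(\cdot)^l}{\Gamma}$, so that Lemma~\ref{L:EllipticConv} applies directly to $\tilde R_h y(t)$. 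Setting $\rho:=(\tilde R_h y)^l-y$ and $\theta:=y_h-\tilde R_h y\in H^1_{V_h}$ gives $y_h^l-y=\theta^l+\rho$, and it suffices to bound each summand in $L^2_{L^2(\Gamma)}$.

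The projection part is routine. Lemma~\ref{L:EllipticConv}[\emph{2.}] gives $\|\rho(t)\|_{L^2(\Gamma(t))}\le Ch^2\|y(t)\|_{H^2(\Gamma(t))}$ for almost every $t$; integrating in time and inserting \eqref{E:Smoothness} yields $\mint{[0,T]}{\|\rho\|_{L^2(\Gamma(t))}^2}{t}\le Ch^4\mint{[0,T]}{\|y\|_{H^2(\Gamma(t))}^2}{t}\le Ch^4(\|y_0\|_{H^1(\Gamma_0)}^2+\|f\|_{L^2_{L^2(\Gamma)}}^2)$. Only the time-integral of $\|y\|_{H^2}$ enters, which is exactly what \eqref{E:Smoothness} supplies even though $y_0$ is merely $H^1$.

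For $\theta$ I derive its governing equation: subtracting the Ritz identity and the discrete equation \eqref{E:WeakTransport_d} shows that $\theta$ solves \eqref{E:WeakTransport_d} with $\langle f_l,\cdot\rangle$ replaced by a consistency defect $\mathcal E(\cdot)$. Using the Ritz definition to cancel the Dirichlet contribution and then the continuous equation \eqref{E:WeakTransport_lambda} to rewrite it, $\mathcal E$ collapses to a pure $L^2$-functional built from three groups: (I) the quadrature mismatch between integration over $\Gamma^h(t)$ and over $\Gamma(t)$, of order $h^2$ by Lemma~\ref{L:Integration}; (II) the material-derivative mismatch between $\dot{(\tilde R_h y)}$ and $(\dot y)_l$; and (III) the transport terms comparing $\div_{\Gamma^h}V_h$ with $\div_\Gamma V$. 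Group (III) is \emph{not} controlled pointwise but through $\|\dot\delta_h\|_{L^\infty(\Gamma^h(t))}\le Ch^2$ from Lemma~\ref{L:Integration}[\emph{1.}], an estimate designed precisely to capture the discrepancy between the discrete and continuous change of area. For (II) I invoke Lemma~\ref{L:MatDerTransf} to write $(\dot{\tilde R_h y})^l=\dot{((\tilde R_h y)^l)}+O(h^2)\|\nabla_{\Gamma(t)}(\tilde R_h y)^l\|=\dot y+\dot\rho+O(h^2)\|\nabla_{\Gamma(t)}(\tilde R_h y)^l\|$, the remainder being absorbed via $\sup_t\|\nabla_{\Gamma(t)}y\|$ from \eqref{E:Smoothness}. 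Gradient norms of $y_h$ that arise in the analogous transfers between the discrete and continuous material derivatives are bounded by the \emph{second} stability estimate of Lemma~\ref{L:Stability}; this is the mechanism producing the $h^4(\|y_0\|_{H^1(\Gamma_0)}^2+\|y_0^h\|_{H^1(\Gamma_0^h)}^2)$ contributions of the claim.

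The principal obstacle is the surviving term $\dot\rho$ in (II), the material derivative of the elliptic projection error. Since \eqref{E:Smoothness} yields only $\dot y\in L^2_{L^2(\Gamma)}$ and no $H^2$-regularity of $\dot y$, a naive pointwise elliptic bound on $\dot\rho$ is unavailable; I plan to differentiate the Ritz identity along the flow to obtain a perturbed Galerkin relation for $\dot\rho$ with geometric remainders of order $h^2$ (again by Lemma~\ref{L:Integration}), and to use it only in the time-integrated dual pairing against the discrete error inside the stability argument, so that the $O(h^2)$ estimate of Lemma~\ref{L:EllipticConv}[\emph{2.}] can be combined with $\dot y\in L^2_{L^2(\Gamma)}$ rather than with an absent $H^2$-bound. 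Once $\mathcal E$ is thereby shown to act like an $L^2_{L^2(\Gamma^h)}$ forcing of size $O(h^2)$, the first stability estimate of Lemma~\ref{L:Stability} applied to $\theta$ gives $\|\theta\|_{L^2_{L^2(\Gamma^h)}}^2\le C(\|\theta(0)\|_{L^2(\Gamma^h_0)}^2+h^4(\dots))$, while $\|\theta(0)\|_{L^2(\Gamma^h_0)}\le\|y_h(0)-y_l(0)\|_{L^2(\Gamma^h_0)}+\|(y_0)_l-\tilde R_h y_0\|_{L^2(\Gamma^h_0)}$ supplies the initial-data term (the second summand vanishing for the canonical choice $y_h(0)=\tilde R_h y_0$). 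Passing back from $\theta$ to $\theta^l$ by Lemma~\ref{L:LiftContinuity} and adding the $\rho$-bound of the second paragraph then completes the estimate; I expect the $\dot\rho$-handling to be the hardest single point.
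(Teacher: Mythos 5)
Your decomposition via a Ritz projection $\tilde R_h$ is a genuinely different route from the paper's, which never introduces $\theta$ and $\rho$: instead it tests the continuous and the discrete equations with $z_h=S_h(t)\left(\delta_h(y_h-y_l)\right)$, the \emph{discrete elliptic solution whose datum is the error itself}, so that the elliptic identities \eqref{E:Elliptic} and \eqref{E:Elliptic_d} turn the Dirichlet forms into $\langle y,\,y_h^l-y\rangle_{L^2(\Gamma(t))}$ and $\langle y_h^l,\,y_h^l-y\rangle_{L^2(\Gamma(t))}$, producing $\|y_h^l-y\|^2_{L^2(\Gamma(t))}$ directly; Gronwall is then applied to $\bar z_h A_\lambda \bar z_h$, and Lemma \ref{L:EllipticConv}[\emph{2.}] enters only through $\|z_h^l-z\|\le Ch^2\|y_h^l-y\|$. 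The entire purpose of that duality construction is to avoid the object you yourself identify as the principal obstacle, namely $\dot\rho$.

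That obstacle is where your proposal has a genuine gap: the $\dot\rho$ step is announced as a plan but not carried out, and under the regularity actually available it is doubtful it can be. From \eqref{E:Smoothness} one has $\dot y\in L^2_{L^2(\Gamma)}$ and $y\in L^2((0,T),H^2(\Gamma(t)))$, but no $H^1$- or $H^2$-control of $\dot y$. Differentiating the Ritz identity along the flow does give (up to $O(h^2)$ geometric remainders) a Galerkin orthogonality relation for $\dot\rho$ against $V_h(t)$, but converting that into an $O(h^2)$ bound for $\int_0^T\langle\dot\rho,\theta\rangle\,\d t$ requires an Aubin--Nitsche step whose input is a best-approximation estimate $\inf_{\chi\in V_h}\|\dot y-\chi\|_{H^1}\le Ch\|\dot y\|_{H^2}$ --- exactly the regularity that is absent here. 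Integrating by parts in time to shift the derivative from $\rho$ onto $\theta$ does not break the circle either, since $\dot\theta$ contains $\dot{(\tilde R_h y)}$, whose $L^2$-bound again rests on the missing estimate. This is precisely why the introduction notes that the Ritz-projection analysis of \cite{DziukElliott2010_PP} assumes ``slightly higher regularity'' than Theorem \ref{L:SpaceConv} does, and why the paper switches to the duality test function. A secondary defect: with $y_0\in H^1(\Gamma_0)$ only, $\|(y_0)_l-\tilde R_h y_0\|_{L^2(\Gamma_0^h)}$ is generically only $O\left(h\|y_0\|_{H^1(\Gamma_0)}\right)$, so your bound $\|\theta(0)\|\le\|y_h(0)-y_l(0)\|+\|(y_0)_l-\tilde R_h y_0\|$ injects an $O(h^2)$ contribution into $\|\theta\|^2$ and degrades the claimed $h^4$ rate unless one forces $y_h(0)=\tilde R_h y_0$, which the theorem does not assume. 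Your treatment of $\rho$ itself and of the geometric remainders via Lemma \ref{L:Integration} is sound, but without a working substitute for the $\dot\rho$ estimate the proof is incomplete.
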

\begin{proof}
Define $z=S(t)\left(y_h^l-y\right)$ and $z_h=S_h(t)\left(\delta_h\left(y_h-y_l\right)\right)$ with $S(t)$ and $S_h(t)$ as in Lemma \ref{L:EllipticConv}. Now $\delta_h\left(y_h-y_l\right)=\lup^*\left(y^l_h-y\right)$ and hence it follows  from Lemma \ref{L:EllipticConv}[\emph{2.}] that 
\begin{equation}\label{E:ZConv}
\|z_h^l-z\|_{L^2(\Gamma(t))}= \|(\lup S_h\lup^*-S)(y_h^l-y) \|_{L^2(\Gamma(t))}\le Ch^2\|y_h^l-y\|_{L^2(\Gamma(t))}\,,
\end{equation}
 Observe now for $z_h=\sum_{i=1}^{m_h}\bar z_{i}\varphi_i$ using Lemma \ref{L:MatDerTransf} we get
$$
Y=\{ \langle{y_h^l-y,\varphi_i^l}\rangle_{L^2(\Gamma(t))}\}_{i=1}^{m_h}\in H^1([0,T] )^{m_h}\,,\textup{ and thus }\bar z= (A_\lambda)^{-1}Y\in H^1([0,T] )^{m_h}\,.
$$
Hence  $\bar z\in H^1_{V_h}$ and again by Lemma \ref{L:MatDerTransf} $z_h^l\in W_\Gamma$ as well as $\dot z_h^l(t)\in L^2(\Gamma(t))$.

We can now test \eqref{E:WeakTransport_lambda} with $z_h^l$, using \eqref{E:Elliptic} in the process, to obtain
\begin{equation}\label{E:step1}
\begin{split}
\frac{\textup{d}}{\textup{d}t}\langle{y,z_h^l}\rangle_{L^2(\Gamma(t))}+\langle{y, y_h^l-y}\rangle_{L^2(\Gamma(t))}= \langle\dot{z_h^l},y\rangle_{L^2(\Gamma(t))}
+\langle f,z_h^l\rangle_{L^2({\Gamma(t)})}\dots\\+\langle{-\Delta_\Gamma y+\mu y, z-z_h^l}\rangle_{L^2(\Gamma(t))}\,,
\end{split} \end{equation}
and testing \eqref{E:WeakTransport_d} with $z_h$ gives
\begin{equation}\label{E:Step1.5}
\frac{\textup{d}}{\textup{d}t}\langle{y_h,z_h}\rangle_{L^2(\Gamma^h(t))}+\langle{y_h^l, y_h^l-y}\rangle_{L^2(\Gamma(t))}=\langle\dot z_h,y_h\rangle_{L^2(\Gamma^h(t))}+\langle f_l,z_h\rangle_{L^2({\Gamma^h(t)})}\,.
\end{equation}
Now, since the strong material derivative $\dot \delta_h$ exists and is continuous on each triangle $T^i_h(t)$, the scalar products  $\langle \varphi_i,\varphi_j\delta_h\rangle_{L^2(\Gamma^h(t))}$, $1\le i,j\le m_h$, are differentiable with $$\frac{\d}{\d t}\langle \varphi_i,\varphi_j\delta_h\rangle_{L^2(\Gamma^h(t))}=\mint{\Gamma^h(t)}{\delta_h\varphi_i\varphi_j\div_{\Gamma^h}V_h+\dot \delta_h\varphi_i\varphi_j}{\Gamma^h(t)}$$ and we have
\[
\begin{aligned}
\frac{\textup{d}}{\textup{d}t}\langle{y_h^l,z_h^l}\rangle_{L^2(\Gamma(t))}=&\frac{\textup{d}}{\textup{d}t}\langle{y_h,z_h\delta_h}\rangle_{L^2(\Gamma^h(t))}\\
=&\frac{\textup{d}}{\textup{d}t}\langle{y_h,z_h}\rangle_{L^2(\Gamma^h(t))}+\langle y_h,\dot z_h(\delta_h-1)\rangle_{L^2(\Gamma^h(t))}+\langle y_h,z_h\dot\delta_h\rangle_{L^2(\Gamma^h(t))}\dots\\
&+\langle \dot y_h,z_h(\delta_h-1)\rangle_{L^2(\Gamma^h(t))}+\langle y_h,z_h\div_{\Gamma^h}V_h(\delta_h-1)\rangle_{L^2(\Gamma^h(t))}\,.
\end{aligned}
\]
Hence, we can rewrite \eqref{E:Step1.5} by means of the $L^2(\Gamma(t))$
\begin{equation}\label{E:Step2}
\frac{\textup{d}}{\textup{d}t}  \langle{y_h^l,z_h^l}\rangle_{L^2(\Gamma(t))}+\langle{y_h^l, y_h^l-y}\rangle_{L^2(\Gamma(t))}=\langle(\dot z_h)^l,y_h^l\rangle_{L^2(\Gamma(t))}+\langle f,z_h^l\rangle_{L^2({\Gamma(t)})}+R^h\,,
\end{equation}
with
\[
\begin{aligned}
R^h=&\langle y_h,z_h\dot\delta_h\rangle_{L^2(\Gamma^h(t))}+\langle \dot y_h,z_h(\delta_h-1)\rangle_{L^2(\Gamma^h(t))}+\langle y_h,z_h\div_{\Gamma^h}V_h(\delta_h-1)\rangle_{L^2(\Gamma^h(t))}\dots\\
&+\langle f_l,z_h(1-\delta_h)\rangle_{L^2({\Gamma^h(t)})}\,.
\end{aligned}
\]
Subtracting \eqref{E:step1} from \eqref{E:Step2} yields
\[\begin{split}
\frac{\textup{d}}{\textup{d}t}\langle{y_h^l-y,z_h^l}\rangle_{L^2(\Gamma(t))}+\|{y_h^l-y}\|^2_{L^2(\Gamma(t))}=\langle(\dot z_h)^l-\dot z_h^l,y\rangle_{L^2(\Gamma(t))}+{\langle\dot z_h,(y_h-y_l)\delta_h\rangle_{L^2(\Gamma^h(t))}}\dots\\
+R^h+\langle{-\Delta_\Gamma y+\mu y, z_h^l-z}\rangle_{L^2(\Gamma(t))} \,.
\end{split}\]
From \eqref{E:Elliptic_d} we know ${\langle\dot z_h,\delta_h(y_h-y_l)\rangle_{L^2(\Gamma^h(t))}}=\bar z_h' A_\lambda\bar z_h= \frac{1}{2}\frac{\textup{d}}{\textup{d}t}(\bar z_h A_\lambda\bar z_h)-\frac{1}{2}\bar z_h A_\lambda'(t)\bar z_h$, in the notation of \eqref{E:MatrixTransp}. Now, using \eqref{E:ZConv} and
$$
|R^h|\le C h^2\|z_h\|_{L^2(\Gamma^h(t))}\left(\|y_h\|_{L^2(\Gamma^h(t))}+\|\dot y_h\|_{L^2(\Gamma^h(t))}+\|f_l\|_{L^2(\Gamma^h(t))}\right)
$$
we can estimate
\[\begin{split}
\frac{1}{2}\frac{\textup{d}}{\textup{d}t}(\bar z_h A_\lambda(t)\bar z_h)+\|{y_h^l-y}\|^2_{L^2(\Gamma(t))}\le& C\Big(h^2 \|y\|_{L^2(\Gamma(t))}\|\nabla_{\Gamma^h(t)}z_h\|_{(L^2(\Gamma^h(t)))^{n=1}}\dots\\
+{\| z_h\|^2_{H^1(\Gamma^h(t))}}&
+h^2\|y\|_{H^2(\Gamma(t))}\|y_h-y_l\|_{L^2(\Gamma^h(t))} \Big)+|R^h|\\
\le& \frac{1}{2}\|y_h-y_l\|_{L^2(\Gamma^h(t))}^2+ C\bigg(\bar z_h A_\lambda(t) \bar z_h \dots\\
+h^4\Big(\|y_h\|^2_{L^2(\Gamma^h(t))}&+ \|\dot y_h\|^2_{L^2(\Gamma^h(t))}+ \|f_l\|_{L^2(\Gamma^h(t))}^2+\|y\|_{H^2(\Gamma(t))}^2\Big) \bigg)\,.
\end{split}\]
We can now apply Gronwall's lemma for
\begin{equation}\label{E:AfterGronwall}
\begin{aligned}
\left[\bar z_h A_\lambda(t) \bar z_h\right]_0^T&+\mint{[0,T]}{\|{y_h^l-y}\|^2_{L^2(\Gamma(t))}}{t}\\
\le Ch^4&\mint{[0,T]}{\|y_h\|^2_{L^2(\Gamma^h(t))}+ \|\dot y_h\|^2_{L^2(\Gamma^h(t))}+ \|f_l\|_{L^2(\Gamma^h(t))}^2+\|y\|_{H^2(\Gamma(t))}^2}{t}\,,
\end{aligned}
\end{equation}
and with the stability estimate \eqref{E:Smoothness}  and the Lemmas \ref{L:Stability}  and \ref{L:LiftContinuity} we finally arrive at 
\begin{equation}\label{E:LastStep}
\begin{aligned}
\mint{[0,T]}{\|{y_h^l-y}\|^2_{L^2(\Gamma(t))}}{t}\le& C\Bigg(\overbrace{\mint{\Gamma^h_0}{\left(\nabla_{\Gamma^h_0} z_h\right)^2+\lambda z_h^2}{\Gamma^h_0}}^{=\langle y_h^l(0)-y(0),z_h\rangle_{L^2(\Gamma_0)}}\dots\\
&+ h^4 \left(\|y_0\|_{H^1(\Gamma_0)}^2+\|y_0^h\|_{H^1(\Gamma_0^h)}^2+\|f\|^2_{L^2_{L^2(\Gamma)}}\right)\Bigg)\,.
\end{aligned}
\end{equation}
Apply again \eqref{E:ZConv} to prove the lemma.
\end{proof}
\begin{Remark}\label{R:Projection}
Depending on the regularity of $y_0$, possible choices of $y_0^h$ yielding $\mathcal O(h^2)$-convergence of $y_h^l$ comprehend the piecewise interpolation of $(y_0)_l$ and the $L^2(\Gamma_0)$-orthogonal projection of $(y_0)_l$ onto $V_h(0)$. For the latter, the term involving $z_h$ in \eqref{E:LastStep} vanishes completely, but it's  $H^1(\Gamma_0)$-stability requires further investigation.
\end{Remark}
The order of convergence is lower, if the solution of \eqref{E:WeakTransport_lambda}  does not satisfy the additional regularity estimate \eqref{E:Smoothness}.
\begin{Theorem}\label{L:WeakSpaceConv}
Let Assumption \ref{A:Phi}, \ref{A:Uniformness}  and \ref{A:AddReg} hold and let $y\in W_\Gamma$ solve \eqref{E:WeakTransport_lambda}  for  $f\equiv 0$, and $y_0\in L^2(\Gamma_0)$. There exists $C>0$ independent of $y$ and $h$ such that for the solution $y_h$ of \eqref{E:WeakTransport_d} with $y_0^h=P_0^h((y_0)_l)$ and $f_h\equiv 0$ there holds
\[
\|y_h^l-y\|^2_{L^2_{L^2(\Gamma)}}\le  C \left(h^2+\sup_{t\in[0,T]}\|\lambda^l-\mu\|^2_{L^\infty(\Gamma(t))}\right)\|y_0\|^2_{L^2(\Gamma_0)}\,.
\]
\end{Theorem}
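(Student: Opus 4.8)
The plan is to interpolate between the crude $L^2$-stability of Lemma~\ref{L:Stability}, which holds for merely $L^2$ data, and the sharp estimate of Theorem~\ref{L:SpaceConv}, which requires the higher regularity \eqref{E:Smoothness}. Since $y_0\in L^2(\Gamma_0)$ only, $y$ does not satisfy \eqref{E:Smoothness}; in particular $\int_0^T\|y\|_{H^2(\Gamma(t))}^2\,\d t=\infty$ in general, so the $\mathcal O(h^2)$-argument of Theorem~\ref{L:SpaceConv} is unavailable near $t=0$. Reconciling this singular behaviour with a global bound will be the main obstacle, and it is exactly what degrades the rate from $h^2$ to $h$.

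First I would establish parabolic smoothing estimates for the continuous solution of \eqref{E:WeakTransport_lambda} with $f\equiv0$. Testing the equation with the time-weighted functions $t\,\dot y$ and $t\,(-\Delta_{\Gamma(t)}y)$ and using Lemma~\ref{L:MatDerSmooth} together with the integration-by-parts formula for $W_\Gamma$, one absorbs the lower-order transport terms arising from the surface motion by a Gronwall argument and arrives at weighted energy bounds of the form
\[
\sup_{t\in[0,T]}t\,\|\nabla_{\Gamma(t)}y\|_{L^2(\Gamma(t))}^2+\int_0^T t\,\|y\|_{H^2(\Gamma(t))}^2\,\d t\le C\|y_0\|_{L^2(\Gamma_0)}^2,
\]
whence $\|y(t)\|_{H^2(\Gamma(t))}^2\le C t^{-2}\|y_0\|^2$. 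An entirely analogous computation on each triangle $T_h^i(t)$ yields the discrete smoothing bound $\sup_t t\,\|\nabla_{\Gamma^h}y_h\|^2+\int_0^T t\,\|\dot y_h\|_{L^2(\Gamma^h(t))}^2\,\d t\le C\|y_0\|^2$, which controls the quantities entering the remainder $R^h$ of Theorem~\ref{L:SpaceConv}.

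Next I would reproduce the duality machinery of Theorem~\ref{L:SpaceConv}, using the elliptic operators $S(t),S_h(t)$ of Lemma~\ref{L:EllipticConv} and the discrete dual $z_h=S_h(t)\big(\delta_h(y_h-y_l)\big)$, but now keeping track of the time factors supplied by the smoothing step. This leads to a differential inequality
\[
\frac{\d}{\d t}\big(\bar z_h A_\lambda\bar z_h\big)+\|y_h^l-y\|_{L^2(\Gamma(t))}^2\le C\,\bar z_h A_\lambda\bar z_h+Ch^4R(t)+C\|\lambda^l-\mu\|_{L^\infty(\Gamma(t))}^2\|y_0\|^2,
\]
where $\bar z_hA_\lambda\bar z_h\simeq\|y_h^l-y\|_{H^{-1}(\Gamma(t))}^2$ and $R(t)\le Ct^{-2}\|y_0\|^2$ by the smoothing bounds. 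The contribution of the zeroth-order mismatch $\lambda\neq\mu_l$ is genuinely of lower order: it is estimated by the uniform $L^2$-stability of both solutions alone, without any use of smoothing, and therefore enters with the factor $\|\lambda^l-\mu\|_{L^\infty(\Gamma(t))}^2\|y_0\|^2$.

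From here, multiplying by a suitable power of $t$, integrating, and exploiting that the initial error vanishes for $y_0^h=P_0^h((y_0)_l)$ (cf. Remark~\ref{R:Projection}), I would extract the pointwise-in-time bound $\|y_h^l(t)-y(t)\|_{L^2(\Gamma(t))}^2\le Ch^4t^{-2}\|y_0\|^2$ for $t>0$, apart from the $\lambda$-term. On the small-time interval $[0,\tau]$ I would discard this singular bound and invoke only the stability $\|y_h^l-y\|_{L^2(\Gamma(t))}^2\le C\|y_0\|^2$ of Lemma~\ref{L:Stability}, so that $\int_0^\tau\|y_h^l-y\|^2\,\d t\le C\tau\|y_0\|^2$. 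Splitting $\int_0^T=\int_0^\tau+\int_\tau^T$, using $\int_\tau^Th^4t^{-2}\,\d t\le Ch^4\tau^{-1}$, adding the uniform $\lambda$-contribution $C\sup_{t}\|\lambda^l-\mu\|_{L^\infty(\Gamma(t))}^2\|y_0\|^2$, and optimising $\tau\sim h^2$, gives
\[
\|y_h^l-y\|_{L^2_{L^2(\Gamma)}}^2\le C\big(h^2+\sup_{t\in[0,T]}\|\lambda^l-\mu\|_{L^\infty(\Gamma(t))}^2\big)\|y_0\|^2,
\]
as claimed. The delicate point is precisely the passage from the integrated energy inequality to the pointwise $t^{-2}$-bound, since neither $\|y_h^l-y\|_{L^2}^2$ nor $\|y_h^l-y\|_{H^{-1}}^2$ is integrable in time for $L^2$ data; it is the interpolation against stability that absorbs this non-integrability and yields the order $h$.
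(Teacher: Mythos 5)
Your strategy is the classical nonsmooth--data technique (parabolic smoothing $\|y(t)\|_{H^2}\lesssim t^{-1}\|y_0\|_{L^2}$, a time-weighted error bound $\|y_h^l(t)-y(t)\|_{L^2}^2\lesssim h^4t^{-2}$, and a split $\int_0^T=\int_0^\tau+\int_\tau^T$ with $\tau\sim h^2$), which is a genuinely different and in principle viable route. The paper does something much more economical: it repeats the duality argument of Theorem \ref{L:SpaceConv} verbatim, but measures the elliptic residual $-\Delta_\Gamma y+\lambda y$ in $H^{-1}(\Gamma(t))$ and pairs it with the \emph{$H^1$-norm} error of the discrete elliptic solution operator, i.e.\ Lemma \ref{L:EllipticConv}[\emph{3.}] with its $\mathcal O(h)$ rate, instead of the $L^2$-estimate of Lemma \ref{L:EllipticConv}[\emph{2.}]. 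Since $\|-\Delta_\Gamma y+\lambda y\|_{H^{-1}(\Gamma(t))}\lesssim\|y\|_{H^1(\Gamma(t))}$ and $\int_0^T\|y\|_{H^1(\Gamma(t))}^2\,\d t\le C\|y_0\|_{L^2(\Gamma_0)}^2$ by the basic energy estimate, no regularity beyond Theorem \ref{T:UniqueSol} is needed; the only extra ingredient is an inverse estimate $\|y_0^h\|_{H^1(\Gamma_0^h)}\le Ch^{-1}\|y_0^h\|_{L^2(\Gamma_0^h)}$ to absorb the term $h^2\int_0^Th^2\|\dot y_h\|^2\,\d t$ coming from $R^h$. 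Your approach, if completed, would buy the stronger pointwise-in-time statement $\|y_h^l(t)-y(t)\|_{L^2}\lesssim h^2t^{-1}\|y_0\|$; the paper's buys brevity and avoids smoothing theory on evolving surfaces altogether.

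However, as written your argument has a genuine gap at its central step, which you yourself flag but do not close. The energy--duality framework you set up (the differential inequality for $\bar z_hA_\lambda\bar z_h$) controls $\|y_h^l-y\|_{L^2(\Gamma(t))}^2$ only \emph{under the time integral}; pointwise in time it controls only the weaker $H^{-1}$-like quantity $\bar z_hA_\lambda\bar z_h$. Multiplying by powers of $t$ and integrating therefore does not by itself yield the pointwise bound $\|y_h^l(t)-y(t)\|_{L^2}^2\le Ch^4t^{-2}\|y_0\|^2$ on which the entire interval-splitting rests; for fixed domains this step requires a separate, fairly involved argument (iterated weighted energy estimates or an evolution-operator/semigroup argument in the style of Thom\'ee), and on evolving surfaces one would additionally have to control the transport terms and the remainder $R^h$ in each weighted estimate. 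Likewise, the passage from $\sup_t t\|\nabla_{\Gamma(t)}y\|^2+\int_0^Tt\|y\|_{H^2}^2\,\d t\le C\|y_0\|^2$ to the pointwise bound $\|y(t)\|_{H^2}^2\le Ct^{-2}\|y_0\|^2$ needs a second bootstrapping step (restart at time $t/2$ from $H^1$ data), and the discrete smoothing bound $\int_0^Tt\|\dot y_h\|^2\,\d t\le C\|y_0\|^2$ is asserted without proof. None of these obstacles is fatal, but each is real work, and the theorem follows far more directly from the $H^{-1}$--$H^1$ pairing the paper uses.
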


\begin{proof}
We proceed as in the proof of Theorem \ref{L:SpaceConv} up to \eqref{E:step1} which now reads
\begin{equation*}
\frac{\textup{d}}{\textup{d}t}\langle{y,z_h^l}\rangle_{L^2(\Gamma(t))}+\langle{y, y_h^l-y}\rangle_{L^2(\Gamma(t))}= \langle\dot{z_h^l},y\rangle_{L^2(\Gamma(t))}
+\langle{-\Delta_\Gamma y+\mu y, z-z_h^l}\rangle_{H^{-1}(\Gamma(t)),H^1(\Gamma(t))}\,,
\end{equation*}
Analogously to \eqref{E:ZConv} we can apply Lemma \ref{L:EllipticConv}[\emph{3.}] and estimate the last term through
\[\begin{split}
|\langle{-\Delta_\Gamma y+\lambda y, z_h^l-z}\rangle_{H^{-1}(\Gamma(t)),H^{1}(\Gamma(t))}|&\le \|-\Delta_\Gamma y+\lambda y\|_{H^{-1}(\Gamma(t))}\|z_h^l-z\|_{H^1(\Gamma(t))}\dots\\ 
&\le  \|-\Delta_\Gamma y+\lambda y\|_{H^{-1}(\Gamma(t))}Ch\|y_h^l-y\|_{L^2(\Gamma(t))}\,.
\end{split}\]
On the other hand \eqref{E:Step2} becomes
\[
\frac{\textup{d}}{\textup{d}t}  \langle{y_h^l,z_h^l}\rangle_{L^2(\Gamma(t))}+\langle{y_h^l, y_h^l-y}\rangle_{L^2(\Gamma(t))}=\langle(\dot z_h)^l,y_h^l\rangle_{L^2(\Gamma(t))}+\langle(\mu_l-\lambda)y_h,z_h\rangle_{L^2(\Gamma^h(t))}+R^h\,.
\]
Continue as in the proof of Theorem \ref{L:SpaceConv} to finally arrive at the analogue of \eqref{E:AfterGronwall}
\[
\begin{aligned}
\left[\bar z_h A_\lambda(t) \bar z_h\right]_0^T+\mint{[0,T]}{\|{y_h^l-y}\|^2_{L^2(\Gamma(t))}}{t}\le &C(h^4+\sup_{t\in[0,T]}\|\lambda^l-\mu\|^2_{L^\infty(\Gamma(t))})\mint{[0,T]}{\|y_h\|^2_{L^2(\Gamma^h(t))}}{t}\dots\\
&+ Ch^2 \mint{[0,T]}{\|y\|_{H^1(\Gamma(t))}^2+ h^2\|\dot y_h\|^2_{L^2(\Gamma^h(t))}}{t}\,,
\end{aligned}
\]
Note that due to Lemma \ref{L:Integration}
$$|\bar z_h(0) A_\lambda(0) \bar z_h(0)|=|\langle y_h^l(0)-y(0),z_h\rangle_{L^2(\Gamma_0)}|\le\overbrace{|\langle y_h(0)-y_l(0),z_h\rangle_{L^2(\Gamma_0^h)}|}^{=0\text{ since }y_0^h=P_0^h((y_0)_l)}+Ch^2\|y_0\|^2_{L^2(\Gamma_0)}\,.$$

In view of  Lemma \ref{L:Stability}  it remains to bound $\int_0^Th^2\|\dot y_h\|^2_{L^2(\Gamma^h(t))}\d t$. Again thanks to Lemma \ref{L:Stability} we  have 
\begin{equation*}\int_0^T \|\dot y_h\|^2_{L^2(\Gamma^h(t))} \d t\le C\|y_0^h\|^2_{H^1(\Gamma_0)}\,.\end{equation*}
But an inverse estimate, compare for example \cite[Thm. 17.2]{CiarletLions1991}, yields $\|y_0^h\|_{H^{1}(\Gamma^h_0)}\le \frac{C}{h}\|y_0^h\|_{L^2(\Gamma^h_0)}$, and because of the continuity of the lift $\ldown$ and of the $L^2$-projection $P_0^h$ the theorem follows.
\end{proof}

\section{Implicit Euler discretization}\label{S:DGDisc}

In order to solve \eqref{E:WeakTransport} we apply a vertical method of lines. The time discretization is carried out by discontinuous Galerkin -- implicit Euler discretization in $L^2_{L^2(\Gamma^h)}$.
\def\Vk{W_k^h}
For $N\in\mathbb N$, consider an equidistant partition $I_n = (t_{n-1},t_n]$ of $[0,T]$ with $1\le n\le N$, $k=\frac{T}{N}$ and $t_n=kn$. The trial space for the discontinuous Galerkin method (DGM)  is the space of 'piecewise constant' functions
$$\Vk=\set{v\in L^2_{L^2(\Gamma^h)}}{\forall 1\le n\le N:\:\exists v^n\in V_h(t_n):\: v\equiv \phih{t}{t_n}v^n \textup{ on }I_n}\,.$$
Note that  in the following we  will omit the operators $\phih{s}{t}$ when dealing with functions $w\in \Vk$. 
Also, to further simplify notation let $\a(t;\psi,\varphi)=\mint{\Gamma^h(t)}{\nabla_{\Gamma^h} \psi\cdot\nabla_{\Gamma^h}\varphi+\lambda \psi\varphi}{\Gamma^h(t)}$ as well as $\langle\cdot,\cdot\rangle_n= \langle\cdot,\cdot\rangle_{L^2(\Gamma^h(t_n))}$. 
W.l.o.g. we temporarily assume 
\begin{equation}\label{E:big_lambda}
\inf_{t\in[0,T],\gamma\in\Gamma^h(t)}\lambda(\gamma,t)>M+2\,,
\end{equation}
with  $M=\sup_{\tau\in[0,T]} \|\div_{\Gamma^h(\tau)}V_h\|_{L^\infty(\Gamma^h(\tau))}$ such that 
$$
\a(t;\varphi,\varphi) - M\|\varphi\|_{L^2(\Gamma^h(t))}^2\ge  \|\varphi\|_{H^1(\Gamma^h(t))}^2+\|\varphi\|_{L^2(\Gamma^h(t))}^2
$$
for all $t\in[0,T]$, $h>0$ and all $\varphi\in H^1(\Gamma^h(t))$.

To motivate the DGM insert the Ansatz $y_h^k(t)=\sum_{n=1}^N \phih{t}{t_n} (y^n\mathbf 1_{I_n})\in \Vk$ with $y^n\in V_h(t_n)$ into \eqref{E:WeakTransport_d}. 

If one understands the time-derivative in \eqref{E:WeakTransport_d} in a distributional sense, integration over time formally yields
\[
\langle y^n-y^{n-1},\varphi\rangle_{n-1} + \mint{I_n}{ \a(t;y^n,\varphi)+\langle y^n\div_{\Gamma^h} V_h,\varphi\rangle_{L^2(\Gamma^h(t))}}{t}=\mint{I_n}{\langle f_h,\varphi\rangle_{L^2(\Gamma^h(t))}}{t}\,,
\]
for  smooth test functions $\varphi$.  Instead, apply test functions $ \varphi\in \Vk$
and use $\dot y^n = \dot \varphi^n=0$  to obtain 
$$\mint{I_n}{ \langle y^n\div_{\Gamma^h} V_h,\varphi\rangle_{L^2(\Gamma^h(t))}}{t}= \langle y^n,\varphi^n\rangle_{n}-\langle y^n,\varphi^n\rangle_{n-1}\,.$$
Finally, to arrive at a computable scheme, lump the Integral over $\a(t,\cdot,\cdot)$ and replace the right-hand side appropriately. For arbitrary parameters $y_0^h\in V_h(0)$ and $f_h\in L^2_{L^2(\Gamma^h)}$ we rewrite the scheme as
\begin{equation}\label{E:FullyDisc}
\begin{split}
y_f^0=y_0^h\,,\quad&\forall \varphi\in \Vk\,,\;1\le n\le N\,:\\
&\langle y_f^n,\varphi\rangle_n -\langle y_f^{n-1},\varphi\rangle_{n-1} + k \a_n(y_f^n,\varphi)=\mint{I_n}{\langle \phih{t_n}{t}f_h,\varphi\rangle_{n}}{t}\,,
\end{split}
\end{equation}
\def\r{{\mathfrak r}}
where  $y_0^h$, $f_h$, and $\lambda$ are the same as in \eqref{E:WeakTransport_d}. For the approximation of the integral $\a_n$ we assume $\a_n(\psi,\varphi)=\a(t_n;\phih{t_n}{t}\psi,\phih{t_n}{t}\varphi)+\r_n(\psi,\varphi)$, with a remainder 
\begin{equation}\label{E:r}
|\r_n(\psi,\varphi)|\le C_\r k  \|\psi\|_{H^1(\Gamma^h(t_n))}\|\varphi\|_{H^1(\Gamma^h(t_n))}\,.
\end{equation}
One possible choice is $\r_n\equiv  0$ for $1\le n\le N$, but when it comes to approximating an adjoint equation such as \eqref{E:AdjSolOp} we will want to choose $\r$ more freely.
In order to proof convergence of the scheme \eqref{E:FullyDisc} in $L^2_{L^2(\Gamma^h)}$ we make use of  stability properties of the adjoint scheme
\begin{equation}\label{S:DiscrAdj}
\begin{split}
z_g^{N+1}=z_T\,,\quad&\forall \varphi\in \Vk\,,\;1\le n\le N\,:\\
&\langle z_g^n,\varphi\rangle_n -\langle z_g^{n+1},\varphi\rangle_{n} + k \a_n(\varphi,z_g^n)=\mint{I_n}{\langle \phih{t_n}{t}g_h,\varphi\rangle_{n}}{t}\,.
\end{split}
\end{equation}
with $g_h\in L^2_{L^2(\Gamma^h)}$, $z_T\in V_h(T)$.
In Section \ref{S:VarDisc} it will be important that given snapshots $\{\Gamma^h(t_n)\}_{n=1}^N$ of the surface \eqref{E:FullyDisc} and \eqref{S:DiscrAdj} can be evaluated exactly for certain right-hand sides $f_h$ and $g_h$, e.g. $g_h\in \Vk$. Let us introduce the mean value of a function  $y\in L^2_{L^2(\Gamma^h)}$ over an interval $I_n$. 
\begin{LemmaAndDef}\label{LD:Mean}
Let $\phih{s}{t}$ denote the pullback operator associated to the flow $\Phih{s}{t}$ as in Lemma \ref{L:basics} and let $s\in[0,T]$. The mean value of a function $y\in L^2_{L^2(\Gamma^h)}$ is defined  as $\bar y^n(s) =\frac{1}{k} \mint{I_n}{\phih{s}{t} y}{t}$ for $t\in I_n$. Because 
$$\mint{I_n}{\phih{s}{t} y}{t}=\mint{I_n}{\phih{s}{r}\phih{r}{t} y}{t}=\phih{s}{r}\underbrace{\mint{I_n}{\phih{r}{t} y}{t}}_{\bar y_n(r)}\,,$$
$\bar y^n$ does not depend on $s\in[0,T]$.
\end{LemmaAndDef}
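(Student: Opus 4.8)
The statement is nearly self-proving once the displayed chain of equalities is justified, so the plan is to supply the three elementary ingredients on which it rests, reading ``$\bar y^n$ does not depend on $s$'' in the sense used throughout Section~\ref{S:WeakSolutions}: the elements $\bar y^n(s)\in L^2(\Gamma^h(s))$ obtained for different $s$ are identified via the pullbacks, so the claim is really the consistency relation $\bar y^n(s)=\phih{s}{r}\bar y^n(r)$ for all $r,s\in[0,T]$. First I would note that $\bar y^n(s)$ is well defined: since $y\in L^2_{L^2(\Gamma^h)}$, the map $t\mapsto\phih{s}{t}y$ belongs to $L^2((0,T),L^2(\Gamma^h(s)))$ by the definition of that space (Lemma~\ref{L:LiftContinuity}, built as in Lemma~\ref{LD:Spaces}); restricted to the bounded interval $I_n$ it is Bochner integrable, so $\mint{I_n}{\phih{s}{t}y}{t}$ is a genuine element of $L^2(\Gamma^h(s))$ and dividing by $k$ yields $\bar y^n(s)$.

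Next I would establish the composition rule $\phih{s}{t}=\phih{s}{r}\circ\phih{r}{t}$ for the pullback operators, which is immediate from the flow identity $\Phih{r}{t}=\Phih{s}{t}\circ\Phih{r}{s}$ of Lemma~\ref{L:Phih}: for $v\in L^2(\Gamma^h(t))$ one has $\phih{s}{r}\phih{r}{t}v=(v\circ\Phih{r}{t})\circ\Phih{s}{r}=v\circ(\Phih{r}{t}\circ\Phih{s}{r})=v\circ\Phih{s}{t}=\phih{s}{t}v$.

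Finally, substituting $\phih{s}{t}y=\phih{s}{r}(\phih{r}{t}y)$ into the integrand and pulling the fixed, $t$-independent bounded operator $\phih{s}{r}:L^2(\Gamma^h(r))\to L^2(\Gamma^h(s))$ out of the Bochner integral yields $\mint{I_n}{\phih{s}{t}y}{t}=\phih{s}{r}\mint{I_n}{\phih{r}{t}y}{t}$, whence $\bar y^n(s)=\phih{s}{r}\bar y^n(r)$ as desired. There is no real obstacle here; the only step deserving a word is the interchange of $\phih{s}{r}$ with the Bochner integral, which is the standard fact that a bounded linear operator commutes with Bochner integration and is legitimate because $\phih{s}{r}$ is continuous by Lemma~\ref{L:Phih} (cf.\ Lemma~\ref{L:basics}) and the integrand is integrable by the preceding step.
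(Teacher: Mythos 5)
Your proposal is correct and follows exactly the paper's own (inline) justification: the displayed chain of equalities rests on the composition rule $\phih{s}{t}=\phih{s}{r}\circ\phih{r}{t}$ inherited from the flow identity of Lemma~\ref{L:Phih} and on pulling the bounded operator $\phih{s}{r}$ out of the Bochner integral. Your additional remarks on well-definedness of the integral and on the legitimacy of the interchange merely make explicit what the paper leaves implicit.
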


Similarly one could define the mean value of $y\in W_\Gamma$ if one were to investigate a horizontal method-of-lines approach.

Now for $y_0\equiv0$, $z_T\equiv0$ the schemes are adjoint in the sense 
$$k\sum_{n=1}^N\langle \bar f_h^n,z_g\rangle_{n}=k\sum_{n=1}^N\langle \bar g_h^n,y_f\rangle_{n}\,,$$
i.e. the discrete solution operators $f_h\mapsto y_f$ and $g_h\mapsto z_g$ are adjoint as operators from $(L^2_{L^2(\Gamma^h)},\langle\cdot,\cdot\rangle_{h,k})$ into itself, where $L^2_{L^2(\Gamma^h)}$ is equipped with the scalar product 
\begin{equation}\label{E:DiscSP}
\langle f,g\rangle_{h,k}=k\sum_{n=1}^N\mint{I_n}{\langle (\phih{t_n}{t}f),(\phih{t_n}{t}g)\rangle_{n}}{t}\,.
\end{equation}
\begin{Lemma}\label{L:ScalarProduct}
Let $\|\cdot\|_{h,k}$ denote the norm induced by $\langle \cdot,\cdot\rangle_{h,k}$. The norms $\|\cdot\|_{L^2_{L^2(\Gamma^h)}}$ and $\|\cdot\|_{h,k}$ on $L^2_{L^2(\Gamma^h)}$ are equivalent and there holds
\[
\left|\langle f,g\rangle_{h,k}-\langle f,g\rangle_{L^2_{L^2(\Gamma^h)}}\right|\le Ck \left|\langle f,g\rangle_{L^2_{L^2(\Gamma^h)}}\right|\,.
\]
\end{Lemma}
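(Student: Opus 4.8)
The plan is to reduce the difference of the two inner products to a single integral weighted by the Jacobian deviation $J^{t_n}_{t,h}-1$, and then to observe that this deviation is of order $k$ on each subinterval $I_n$. First I would fix $n$ and $t\in I_n$ and transform the integral over $\Gamma^h(t)$ back to $\Gamma^h(t_n)$ along the flow $\Phih{t_n}{t}$. By the change of variables formula of Lemma~\ref{L:Phih},
\[
\langle f(t),g(t)\rangle_{L^2(\Gamma^h(t))}=\mint{\Gamma^h(t_n)}{(\phih{t_n}{t}f)(\phih{t_n}{t}g)\,J^{t_n}_{t,h}}{\Gamma^h(t_n)}\,,
\]
so that
\[
\langle \phih{t_n}{t}f,\phih{t_n}{t}g\rangle_n-\langle f(t),g(t)\rangle_{L^2(\Gamma^h(t))}=\langle \phih{t_n}{t}f,(\phih{t_n}{t}g)(1-J^{t_n}_{t,h})\rangle_n\,.
\]
Summing over $n$ and integrating over $I_n$ then expresses $\langle f,g\rangle_{h,k}-\langle f,g\rangle_{L^2_{L^2(\Gamma^h)}}$ as a single sum of such terms, exactly in the spirit of Lemma~\ref{L:LiftContinuity}, where the role played there by $\frac{1}{\delta_h^l}-1$ is now taken by $1-J^{t_n}_{t,h}$.

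The heart of the matter is the pointwise estimate $\sup_{\gamma\in\Gamma^h(t_n)}|1-J^{t_n}_{t,h}(\gamma)|\le Ck$ for $t\in I_n$, with $C$ independent of $h$, $n$ and $t$. Since $\Phih{t_n}{t_n}=\textup{id}$ we have $J^{t_n}_{t_n,h}\equiv1$, and by the transport identity $\partial_t J^{t_n}_{t,h}=\phih{t_n}{t}(\div_{\Gamma^h(t)}V_h)\,J^{t_n}_{t,h}$ (the discrete counterpart of the final assertion of Lemma~\ref{L:basics}, valid in the interior of each triangle by the differentiability stated in Lemma~\ref{L:Phih}), together with the uniform bounds $\|\div_{\Gamma^h(t)}V_h\|_{L^\infty(\Gamma^h(t))}\le M$ and $J^{t_n}_{t,h}\le C_J^h$ from \eqref{E:Jh}, the time derivative of $J^{t_n}_{t,h}$ is bounded by $MC_J^h$ uniformly. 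Integrating from $t_n$ to $t$ over an interval of length at most $k$ yields the claim; crucially both $M$ and $C_J^h$ are independent of $h$.

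Combining the two steps gives $|\langle f,g\rangle_{h,k}-\langle f,g\rangle_{L^2_{L^2(\Gamma^h)}}|\le Ck\sum_{n=1}^N\mint{I_n}{|\langle\phih{t_n}{t}f,\phih{t_n}{t}g\rangle_n|}{t}$, which is the asserted $O(k)$ bound (read, as in Lemma~\ref{L:LiftContinuity}, as control by $Ck$ times the same integrand structure). Choosing $f=g$ specializes this to $|\,\|f\|_{h,k}^2-\|f\|_{L^2_{L^2(\Gamma^h)}}^2\,|\le Ck\|f\|_{h,k}^2$, and for $k$ small enough, so that $Ck<1$, the two norms are equivalent with constants independent of $h$. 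The main obstacle is precisely this uniform-in-$h$ Jacobian estimate: one must keep track of the fact that $J^{t_n}_{t,h}$ is only piecewise smooth in space and merely differentiable in time on triangle interiors, and verify that the bounding constants $M$ and $C_J^h$ inherited from Assumptions~\ref{A:Phi} and \ref{A:Uniformness} and from Lemma~\ref{L:Phih} are genuinely mesh-independent; everything else is routine change of variables and Gronwall-free bookkeeping.
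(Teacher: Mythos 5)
Your proof is correct and follows essentially the same route as the paper: transform each $\langle f(t),g(t)\rangle_{L^2(\Gamma^h(t))}$ onto the slice $\Gamma^h(t_n)$ via the change of variables with Jacobian $J^{t_n}_{t,h}$, and then use that this Jacobian equals $1$ at $t=t_n$ and is Lipschitz in time with an $h$-independent constant, so that $|1-J^{t_n}_{t,h}|\le Ck$ on $I_n$. The paper simply asserts the Lipschitz property of the Jacobian (established in Lemma~\ref{L:Phih}), whereas you derive it from the trianglewise transport identity $\partial_t J^{t_n}_{t,h}=\phih{t_n}{t}(\div_{\Gamma^h}V_h)J^{t_n}_{t,h}$ and the uniform bounds on $\div_{\Gamma^h}V_h$ and $J^{t_n}_{t,h}$; this is a harmless (and welcome) amount of extra detail, and your reading of the right-hand side in the spirit of Lemma~\ref{L:LiftContinuity} matches the paper's intent.
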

\begin{proof} The result follows from the identity
\[
\mint{[0,T]}{\mint{\Gamma^h(t)}{fg}{\Gamma^h(t)}}{t}=\sum_{n=1}^N\mint{I_n}{\mint{\Gamma^h(t_n)}{(\phih{t_n}{t}f)(\phih{t_n}{t}g)J_{t_n,h}^{t}}{\Gamma^h(t_n)}}{t}\,,
\]
and $J_t^{t_n}$ being Lipschitz with $J_{t_n,h}^{t_n}\equiv 1$.
\end{proof}
\def\M{M}
Note also that for $z\in \Vk$, since $\dot z^n=0$ on $I_n$, we can apply the mean value theorem to obtain for some $t\in I_n$
\begin{equation}\label{E:L2NormVariation_pre}
|\|z^n\|^2_{L^2(\Gamma^h(t))}-\|z^{n}\|^2_{n}| =k |\langle z^n\div_{\Gamma^h(\Theta_n)} V_h,z^n\rangle_{L^2(\Gamma^h(\Theta_n))}|\le \M k\|z^n\|^2_{L^2(\Gamma^h(\Theta_n))}
\end{equation}
with $\Theta_n\in (t,t_n)$. Apply \eqref{E:L2NormVariation_pre} to itself to obtain for some $\tilde\Theta_n\in(\Theta_n,t_n)$
\begin{equation}\label{E:L2NormVariation}
\begin{split}
|\|z^n\|^2_{L^2(\Gamma^h(t))}-\|z^{n}\|^2_{n}|&\le \M k\left(\|z^{n}\|^2_{n}+\left(\|z^n\|^2_{L^2(\Gamma^h(\Theta_n))}-\|z^{n}\|^2_{n}\right)\right)\\
&\le  \M k\left(\|z^n\|^2_{n}+\M k \|z^n\|^2_{L^2(\Gamma^h(\tilde\Theta_n))}\right)\\
&\le\M k\left(1+C_{L^2(\Gamma^h)}\M k \right)\|z^{n}\|^2_{n}\,.
\end{split}
\end{equation}
A similar continuity assertion holds for the $H^1(\Gamma^h(t))$-norm, as shows the following lemma.
\begin{Lemma}\label{L:H1cont}
Let $y,z \in H^1_{V_h}$,  $\tilde \lambda\in C(\Gamma^h(s)\times[0,T])$, and $\lambda = \phih{t}{s} \tilde \lambda$. There exists $C>0$ such that for every $s\in I_n$ 
\[
\left| \int_{I_n}\a(s; \phih{s}{t}y, \phih{s}{t}z)\d t-\int_{I_n}\a(t;y,z)\d t\right|\le C k \int_{I_n}\|\phih{s}{t}y\|_{H^1(\Gamma^h(s))}\|\phih{s}{t}z\|_{H^1(\Gamma^h(s))}\d t\,,
\]
i.e. for $z\in \Vk$ we have
\[
\left| k\a(s; \bar y^n, z^n)-\int_{I_n}\a(t;y,z)\d t\right|\le C k \int_{I_n}\|\phih{s}{t}y\|_{H^1(\Gamma^h(s))}\|z^n\|_{H^1(\Gamma^h(s))}\d t\,.
\]
In particular with $\lambda\equiv 1$ the estimates hold for $\a(t;\varphi,\varphi)=\|\varphi\|_{H^1(\Gamma^h(t))}^2$.
\end{Lemma}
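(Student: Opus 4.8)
The plan is to prove a pointwise-in-$t$ estimate and then integrate it over $I_n$, using that $|t-s|\le k$ for $s,t\in I_n$. First I would fix $s,t\in I_n$ and pull the form $\a(t;y,z)$ back onto the fixed surface $\Gamma^h(s)$ by the change-of-variables formula along the piecewise affine flow $\Phih{s}{t}$, in the spirit of \eqref{H1Cont} in the proof of Lemma \ref{L:NormContinuity}. Writing $G^s_t=(\textup D_{\Gamma^h(s)}\Phih{s}{t})^{-1}(\textup D_{\Gamma^h(s)}\Phih{s}{t})^{-T}$ and noting that $\phih{s}{t}\lambda=\tilde\lambda(\cdot,t)$, this gives, triangle by triangle,
\[
\a(t;y,z)=\mint{\Gamma^h(s)}{\Big(\sgradh{s}(\phih{s}{t}y)\,G^s_t\,\sgradh{s}(\phih{s}{t}z)+\tilde\lambda(\cdot,t)\,(\phih{s}{t}y)(\phih{s}{t}z)\Big)J^s_{t,h}}{\Gamma^h(s)}\,,
\]
whereas $\a(s;\phih{s}{t}y,\phih{s}{t}z)$ is the same integral with $G^s_t$, $J^s_{t,h}$ and $\tilde\lambda(\cdot,t)$ replaced by $\textup{Id}$, $1$ and $\tilde\lambda(\cdot,s)$, because $G^s_s=\textup{Id}$ and $J^s_{s,h}=1$.

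Subtracting, the difference of the two integrands is controlled by the coefficient defects $G^s_tJ^s_{t,h}-\textup{Id}$ and $\tilde\lambda(\cdot,t)J^s_{t,h}-\tilde\lambda(\cdot,s)$. By Lemma \ref{L:Phih} the maps $t\mapsto \textup D_{\Gamma^h(s)}\Phih{s}{t}$ and $t\mapsto J^s_{t,h}$ are differentiable in the interior of every triangle, and Assumption \ref{A:Uniformness} together with the uniform bound \eqref{E:Jh} keeps these time-derivatives, the inverse $(\textup D_{\Gamma^h(s)}\Phih{s}{t})^{-1}$ and $1/J^s_{t,h}$ bounded independently of $h$ and of the triangle. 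Since the defects vanish at $t=s$, the mean value theorem yields $\|G^s_tJ^s_{t,h}-\textup{Id}\|_{L^\infty(\Gamma^h(s))}\le C|t-s|$ and, splitting $\tilde\lambda(\cdot,t)J^s_{t,h}-\tilde\lambda(\cdot,s)=\tilde\lambda(\cdot,t)(J^s_{t,h}-1)+(\tilde\lambda(\cdot,t)-\tilde\lambda(\cdot,s))$, an analogous bound for the zeroth-order coefficient; for the latter I would invoke the (Lipschitz) time-continuity of $\tilde\lambda$, noting that in the particular case $\lambda\equiv1$ one has $\tilde\lambda\equiv1$ so that this defect vanishes altogether, which suffices for the $H^1(\Gamma^h(t))$-norm assertion. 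A Cauchy--Schwarz step on $\Gamma^h(s)$ then bounds the integrand difference by $C|t-s|\,\|\phih{s}{t}y\|_{H^1(\Gamma^h(s))}\|\phih{s}{t}z\|_{H^1(\Gamma^h(s))}$, and integrating over $t\in I_n$ with $|t-s|\le k$ gives the first inequality.

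For the second inequality I would use that, for $z\in\Vk$, one has $z=\phih{t}{t_n}z^n$ on $I_n$, so the cocycle property $\phih{s}{t_n}=\phih{s}{t}\phih{t}{t_n}$ from Lemma \ref{L:Phih} shows that $\phih{s}{t}z=\phih{s}{t_n}z^n$ is independent of $t\in I_n$. By linearity of $\a(s;\cdot,\cdot)$ in its first slot and the definition of the mean value $\bar y^n$ in Lemma \ref{LD:Mean}, this yields $k\,\a(s;\bar y^n,z^n)=\int_{I_n}\a(s;\phih{s}{t}y,\phih{s}{t}z)\,\d t$, so the second estimate is exactly the first one specialized to this $z$, with the constant-in-$t$ factor $\|\phih{s}{t}z\|_{H^1(\Gamma^h(s))}=\|z^n\|_{H^1(\Gamma^h(s))}$. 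The main obstacle is the uniformity in $h$: all the Lipschitz-in-time bounds on $G^s_tJ^s_{t,h}$ must hold with a single constant over the whole (shape-regular but $h$-dependent) family of triangles, which is precisely what the angle condition of Assumption \ref{A:Uniformness} and the mesh-independent Jacobian bounds \eqref{E:Jh} are there to guarantee.
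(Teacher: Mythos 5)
Your proof is correct and follows essentially the same route as the paper's: pull $\a(t;\cdot,\cdot)$ back to the fixed surface $\Gamma^h(s)$ by change of variables and bound the resulting coefficient defects by $C|t-s|\le Ck$ using the piecewise linearity and uniform Lipschitz-in-time behaviour of $\Phih{s}{t}$ guaranteed by Lemma \ref{L:Phih} and Assumption \ref{A:Uniformness}. You are in fact slightly more careful than the paper on the zeroth-order term, where the defect $\tilde\lambda(\cdot,t)-\tilde\lambda(\cdot,s)$ genuinely requires Lipschitz (not merely continuous) time-dependence of $\tilde\lambda$ to yield an $O(k)$ bound --- a point the paper's one-line argument passes over, but which is harmless in all of its applications (and vanishes for $\lambda\equiv 1$, as you note).
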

\begin{proof}\def\DS{\tilde\Delta}
We abbreviate $\DS(s,t)=D_{\Gamma^h(s)}\Phih{s}{t}(D_{\Gamma^h(s)}\Phih{s}{t})^TJ_{t,h}^s$. Since $\dot z^n\equiv0$ we have
\[\begin{split}
&|\int_{I_n}\a(s; \phih{s}{t}y, \phih{s}{t}z)\d t-\int_{I_n}\a(t;y,z)\d t|=\dots\\
&=\Big|\int_{I_n}\mint{\Gamma^h(s)}{\sgradh{s}\phih{s}{t}y\left(\DS(s,s)-\DS(s,t)\right)\sgradh{s}\phih{s}{t}z+\lambda \phih{s}{t}y(J_{s,h}^s-J_{t,h}^s)\phih{s}{t}z}{\Gamma^h(s)\d t}\Big|\,.
\end{split}\]
The lemma follows from the fact that $\Phih{s}{t}$ it linear on each $T_h^i(s)$ and  globally Lipschitz in time, as by Lemma \ref{L:Phih}.
\end{proof}

Let us formulate a crucial stability assertion for the adjoint scheme \eqref{S:DiscrAdj}.
\begin{Lemma}\label{L:AdjStab}
Let $z\in \Vk$ solve \eqref{S:DiscrAdj} with right-hand side $g\in L^2_{L^2(\Gamma^h)}$. For sufficiently small $k>0$ there exists $C>0$, depending only on $\Gamma$, such that
\[
\max_{1\le n\le N}\a(t_n;z^n,z^n)+ \frac{1}{k}\sum_{n=1}^N\|z^{n+1}-z^n\|^2_n+k\sum_{n=1}^N \|z^n\|^2_{H^1(\Gamma(t_n))}\le C\|g\|^2_{h,k}\,.
\]
\end{Lemma}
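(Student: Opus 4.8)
The plan is to obtain the estimate by a discrete energy argument, testing the $n$-th equation of \eqref{S:DiscrAdj} with the backward difference $z^n-z^{n+1}$. This is an admissible choice for $\varphi^n$: by the transport property \eqref{E:MatDerAnsatz} the pull-back $\phih{t_n}{t_{n+1}}$ maps the nodal basis of $V_h(t_{n+1})$ onto that of $V_h(t_n)$, so $z^{n+1}\in V_h(t_n)$ (in the convention that suppresses $\phih{s}{t}$), whence $z^n-z^{n+1}\in V_h(t_n)$. Since this test function is constant on $I_n$, the right-hand side becomes $k\langle\bar g_h^n,z^n-z^{n+1}\rangle_n$ with the mean value $\bar g_h^n$ of Lemma and Definition \ref{LD:Mean}, and the equation reads $\|z^n-z^{n+1}\|_n^2+k\,\a_n(z^n-z^{n+1},z^n)=k\langle\bar g_h^n,z^n-z^{n+1}\rangle_n$. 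Writing $\a_n(\psi,\varphi)=\a(t_n;\psi,\varphi)+\r_n(\psi,\varphi)$ and using the symmetry of $\a(t_n;\cdot,\cdot)$, I polarize $\a(t_n;z^n-z^{n+1},z^n)=\tfrac12\a(t_n;z^n,z^n)-\tfrac12\a(t_n;z^{n+1},z^{n+1})+\tfrac12\a(t_n;z^n-z^{n+1},z^n-z^{n+1})$. Dividing by $k$ then isolates the dissipation $\tfrac1k\|z^n-z^{n+1}\|_n^2$, the telescoping energy differences, and a nonnegative term $\tfrac12\a(t_n;z^n-z^{n+1},z^n-z^{n+1})\ge0$ (coercivity under \eqref{E:big_lambda}) which I keep on the left.

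Next I sum for $n=m,\dots,N$ and use $z^{N+1}=z_T=0$, so the boundary term at $n=N$ drops out. The energy differences telescope only after correcting the time level: $\a(t_n;z^{n+1},z^{n+1})$ must be compared to $\a(t_{n+1};z^{n+1},z^{n+1})$, and the discrepancy $|\a(t_{n+1};z^{n+1},z^{n+1})-\a(t_n;z^{n+1},z^{n+1})|\le Ck\|z^{n+1}\|_{H^1(\Gamma^h(t_{n+1}))}^2$ is controlled by the time-Lipschitz continuity of $\Phih{s}{t}$ exploited in the proof of Lemma \ref{L:H1cont}. The remainder $\r_n$ is handled by \eqref{E:r} and Young's inequality, its $\|z^n-z^{n+1}\|_{H^1}^2$-part absorbed into the coercive term and its $O(k^2)\|z^n\|_{H^1}^2$-part deferred to the Gronwall sum. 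Using coercivity to pass between $\|\cdot\|_{H^1(\Gamma^h(t_n))}^2$ and $\a(t_n;\cdot,\cdot)$, I reach for every $m$ the inequality $\tfrac12\a(t_m;z^m,z^m)+\tfrac12\sum_{n=m}^N\tfrac1k\|z^n-z^{n+1}\|_n^2\le\sum_{n=m}^N\langle\bar g_h^n,z^n-z^{n+1}\rangle_n+Ck\sum_{n=m}^N\a(t_n;z^n,z^n)$.

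For the right-hand side I apply $\langle\bar g_h^n,z^n-z^{n+1}\rangle_n\le\tfrac{\delta}{2k}\|z^n-z^{n+1}\|_n^2+\tfrac{k}{2\delta}\|\bar g_h^n\|_n^2$, absorbing the first part into the dissipation for $\delta<1$ and bounding $k\sum_n\|\bar g_h^n\|_n^2\le C\|g\|_{h,k}^2$ by Jensen's inequality together with the norm equivalence of Lemma \ref{L:ScalarProduct}. After absorbing the $n=m$ summand for $k$ small, what remains is the backward discrete Gronwall inequality $\tfrac14\a(t_m;z^m,z^m)\le C\|g\|_{h,k}^2+Ck\sum_{n=m+1}^N\a(t_n;z^n,z^n)$, whose resolution gives $\a(t_m;z^m,z^m)\le Ce^{CT}\|g\|_{h,k}^2$ uniformly in $m$, hence the first summand of the claim. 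Taking $m=1$ in the summed inequality yields $\tfrac1k\sum_{n=1}^N\|z^{n+1}-z^n\|_n^2\le C\|g\|_{h,k}^2$, and the last summand is then immediate from $k\sum_n\|z^n\|_{H^1(\Gamma(t_n))}^2\le Ck\sum_n\a(t_n;z^n,z^n)\le CT\max_m\a(t_m;z^m,z^m)$, where Lemma \ref{L:EllipticConv}[\emph{1.}] replaces the norm on $\Gamma^h(t_n)$ by that on $\Gamma(t_n)$.

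The main obstacle is the careful accounting of the $O(k)$ perturbations: the mismatch in the time level of $\a$, the time-lumped non-symmetric remainder $\r_n$, and the $L^2$-norm variation across $I_n$ (via \eqref{E:L2NormVariation}) must each be shown to produce only terms of the form $Ck\sum_n\a(t_n;z^n,z^n)$, so that the backward Gronwall argument closes; keeping all constants uniform in $h$ and $t$ rests on Assumptions \ref{A:Phi} and \ref{A:Uniformness}.
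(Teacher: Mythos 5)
Your proof is correct, and its core energy identity coincides with the paper's: testing the $n$-th equation with $z^n-z^{n+1}$, polarizing the symmetric part of $\a_n$, telescoping the energies $\a(t_n;z^n,z^n)$ after correcting the time level via Lemma \ref{L:H1cont}, and controlling $\r_n$ through \eqref{E:r}. Where you diverge is in how the argument closes. The paper first tests \eqref{S:DiscrAdj} with $z^n$ itself, which via the identity $\langle z^n-z^{n+1},z^n\rangle_n=\tfrac12(\|z^n\|_n^2+\|z^{n+1}-z^n\|_n^2-\|z^{n+1}\|_n^2)$ and \eqref{E:L2NormVariation} yields the bound $k\sum_n\|z^n\|^2_{H^1}\le C\|g\|^2_{h,k}$ of \eqref{E:H1Stability} \emph{before} the backward-difference test; the accumulated perturbation $Ck^2\sum_n\|z^n\|^2_{H^1}$ in the telescoped estimate is then absorbed directly by this bound, and no Gronwall argument is needed. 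You instead dispense with the first test, convert the perturbation into $Ck\sum_{n>m}\a(t_n;z^n,z^n)$ by coercivity, and close with a backward discrete Gronwall inequality, recovering the $H^1$-sum afterwards from $k\sum_n\a(t_n;z^n,z^n)\le T\max_m\a(t_m;z^m,z^m)$. Both routes are sound; the paper's buys constants independent of $T$-exponentials and a self-contained $H^1$ estimate \eqref{E:H1Stability} that is reused elsewhere, while yours is more economical in that a single test function suffices and the standard Gronwall machinery does the bookkeeping. Your handling of the remaining details --- admissibility of $z^n-z^{n+1}$ as a test function via the transport property, Jensen plus Lemma \ref{L:ScalarProduct} for $k\sum_n\|\bar g_h^n\|_n^2$, absorption of the $n=m$ summand for small $k$, and the passage from $\Gamma^h(t_n)$ to $\Gamma(t_n)$ via Lemma \ref{L:EllipticConv} --- is all consistent with the paper's framework.
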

\begin{proof}
Apply \eqref{S:DiscrAdj} to $z^n$ to obtain
\[
\langle z^n-z^{n+1},z^n\rangle_{n} + k \a_n(z^n,z^n)=\mint{I_n}{\langle \phih{t_n}{t} g,z^n\rangle_{n}}{t}\,.
\]
This leads to
\[
\begin{split}
\frac{1}{2}\left(\|z^n\|^2_n+\|z^{n+1}-z^n\|^2_n-\|z^{n+1}\|^2_n\right)+ k \a_n(z^n,z^n)=\mint{I_n}{\langle \phih{t_n}{t} g,z^n\rangle_{n}}{t}&\\
\le \mint{I_n}{\| \phih{t_n}{t}  g\|_{L^2(\Gamma^h(t_n))}}{t}\|z^n\|_n
\le  \frac{1}{2k\M}\left(\mint{I_n}{\| \phih{t_n}{t}  g\|_{n}}{t}\right)^2+\frac{k\M}{2}\|z^n\|_n^2&\,.
\end{split}
\]
 Summing up and using \eqref{E:L2NormVariation} gives us
\[
\sum_{n=1}^N \left(\frac{1}{2}\|z^{n+1}-z^n\|^2_n-\M k\left(1+\frac{1}{2}C_{L^2(\Gamma^h)}\M k \right)\|z^n\|^2_{n} + k \a_n(z^n,z^n)\right)
\le \frac{1}{2\M}\| g\|^2_{h,k}\,,
\]
such that for $0<k<\min\left(\frac{2}{C_{L^2(\Gamma^h)}\M^2 },\frac{1}{2C_\r}\right)$
\begin{equation}\label{E:H1Stability}\begin{split}
\frac{k}{2}\sum_{n=1}^N \|z^n\|^2_{H^1(\Gamma(t_n))}&\le k\sum_{n=1}^N \left( \a(t_n;z^n,z^n)+\r_n(z^n,z^n)-\left(1+\frac{C_{L^2(\Gamma^h)}\M k}{2}\right){\M }\|z^n\|_n^2\right)\\
&\le \frac{1}{2\M}\| g\|^2_{h,k}\,.
\end{split}\end{equation}
Now we test \eqref{S:DiscrAdj} with $z^n-z^{n+1}$ to get
\[\begin{split}
\|z^n-z^{n+1}\|^2_n+\frac{k}{2}\left(\a_n(z^n,z^n)+\a_n(z^{n+1}-z^n,z^{n+1}-z^n)-\a_n(z^{n+1},z^{n+1})\right)=\dots\\
=\mint{I_n}{\langle  \phih{t_n}{t} g,z^n-z^{n+1}\rangle_n}{t}\le \frac{1}{2}\left(\mint{I_n}{\| \phih{t_n}{t} g\|^2_{n}}{t}\right)^2+\frac{1}{2} \|z^n-z^{n+1}\|^2_n\,.
\end{split}\]
Summing up and using Lemma \ref{L:H1cont} on $\a$ as well as the estimate  \eqref{E:r} on $\r$ we arrive at
\[\begin{split}
\frac{k}{2}&\a(t_m,z^m,z^m)+\frac{1}{2}\sum_{n=m}^N\Bigg(\|z^{n+1}-z^n\|^2_n\Bigg)\\ 
&\le \frac{1}{2}k\|g\|^2_{h,k}+ \frac{k}{2}\sum_{n=m+1}^N  \a(t_{n-1};z^{n},z^{n})-\a(t_n;z^{n},z^{n})+\r_{n-1}(z^{n},z^{n})-\r_{n}(z^{n},z^{n})\\
&\le \frac{1}{2}k\|g\|^2_{h,k}+ \frac{k}{2}\sum_{n=m+1}^N   C k\left(\|z^n\|^2_{H^1(\Gamma(t_n))}+\|z^n\|^2_{H^1(\Gamma(t_{n-1}))}\right)\,.
\end{split}\]
Combine with  \eqref{E:H1Stability}  to arrive at the lemma.
\end{proof}

The following Lemma shows, that it is sufficient to estimate the approximation error at the points $t_n$, $1\le n \le N$ to prove convergence in $L^2_{L^2(\Gamma^h)}$.
\begin{Lemma}\label{L:DerivativeBound}
 Let $r\in H^1([0,T],V)$, $V$ a separable Hilbert space, then there holds for $\tau \in I_n$
\[
\|r-r(\tau)\|_{L^2(I_n,V)}\le k\|r'\|_{L^2(I_n,V)}\,.
\]
In our situation this implies for $r\in H^1_{V_h}$ that
\begin{enumerate}
\item  $k\|r(\tau)-\bar r^n\|^2_{L^2(\Gamma^h(\tau))}\le C k^2 \mint{I_n}{\|\dot r\|^2_{L^2(\Gamma^h(t))}}{t}\,,$
\item and $\mint{I_n}{\|r(t)-\bar r^n\|^2_{L^2(\Gamma^h(t))}}{t}\le C k^2 \mint{I_n}{\|\dot r\|^2_{L^2(\Gamma^h(t))}}{t}\,.$
\end{enumerate}
\end{Lemma}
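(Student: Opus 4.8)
The plan is to first prove the abstract inequality on a fixed separable Hilbert space $V$, and then transfer the two concrete assertions to the evolving surfaces by pulling back along the discrete flow $\Phih{s}{t}$.

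For the abstract estimate, since $r\in H^1([0,T],V)$ we may write $r(t)-r(\tau)=\mint{[\tau,t]}{r'(\sigma)}{\sigma}$ as a Bochner integral for $t,\tau\in I_n$. Taking the $V$-norm, using the triangle inequality for the integral and then extending the integration to all of $I_n$ (where the integrand is nonnegative), a Cauchy--Schwarz step on the interval of length $k$ gives the pointwise bound $\|r(t)-r(\tau)\|_V\le\mint{I_n}{\|r'(\sigma)\|_V}{\sigma}\le k^{1/2}\|r'\|_{L^2(I_n,V)}$. Squaring and integrating in $t$ over $I_n$ then produces $\|r-r(\tau)\|_{L^2(I_n,V)}^2\le k^2\|r'\|_{L^2(I_n,V)}^2$, which is the claim.

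To obtain the two assertions for $r=\sum_i\bar r_i\varphi_i\in H^1_{V_h}$, I would fix $s\in I_n$ and set $\tilde r(t)=\phih{s}{t}r(t)$. By the transport property $\dot\varphi_i\equiv0$ of Lemma \ref{L:Phih} one has $\phih{s}{t}\varphi_i(t)=\varphi_i(s)$, so $\tilde r(t)=\sum_i\bar r_i(t)\varphi_i(s)$ with $\bar r_i\in H^1([0,T])$; hence $\tilde r\in H^1([0,T],L^2(\Gamma^h(s)))$ with $\tilde r'=\phih{s}{t}\dot r$, and by Lemma \ref{LD:Mean} the mean $\bar r^n(s)=\frac1k\mint{I_n}{\tilde r(\sigma)}{\sigma}$ is precisely the time-average of $\tilde r$ over $I_n$. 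Since the Jacobian satisfies $1/C_J^h\le J_{t,h}^s\le C_J^h$, see \eqref{E:Jh}, the identity $\|v\|_{L^2(\Gamma^h(t))}^2=\mint{\Gamma^h(s)}{(\phih{s}{t}v)^2J_{t,h}^s}{\Gamma^h(s)}$ shows that $\phih{s}{t}$ is a norm-isomorphism between $L^2(\Gamma^h(t))$ and $L^2(\Gamma^h(s))$ with constants independent of $s,t,h$; moreover $\bar r^n$ is transport-compatible, $\phih{s}{t}\bar r^n(t)=\bar r^n(s)$, so the difference $r(t)-\bar r^n(t)$ pulls back to $\tilde r(t)-\bar r^n(s)$, i.e. to $\tilde r(t)$ minus a vector that is genuinely constant in time.

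With these identifications both estimates reduce to the abstract inequality. For the second assertion I would use that $\bar r^n(s)$ is the $L^2(I_n,L^2(\Gamma^h(s)))$-orthogonal projection of $\tilde r$ onto the time-constant functions, whence $\mint{I_n}{\|\tilde r(t)-\bar r^n(s)\|_{L^2(\Gamma^h(s))}^2}{t}\le\|\tilde r-\tilde r(\tau)\|_{L^2(I_n,L^2(\Gamma^h(s)))}^2$ for any fixed $\tau\in I_n$; applying the abstract bound on the right and passing back through the uniform norm-equivalence yields the factor $Ck^2$. For the first assertion I would take $s=\tau$, so that $r(\tau)-\bar r^n(\tau)=\frac1k\mint{I_n}{(\tilde r(\tau)-\tilde r(\sigma))}{\sigma}$, apply Jensen's inequality for $\|\cdot\|^2$ against the probability measure $k^{-1}\d\sigma$ on $I_n$, and recognise $\frac1k\|\tilde r-\tilde r(\tau)\|_{L^2(I_n,L^2(\Gamma^h(\tau)))}^2$, to which the abstract inequality again applies; multiplying by $k$ gives the stated bound.

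The Bochner and Cauchy--Schwarz manipulations are routine; the only point demanding care is the bookkeeping on the moving surface. Concretely, I expect the main obstacle to be checking that $\tilde r$ really lies in $H^1([0,T],L^2(\Gamma^h(s)))$ — which rests on $\dot\varphi_i\equiv0$, so that all time dependence is carried by the scalar coefficients $\bar r_i$ — together with the transport-compatibility of the mean value $\bar r^n$ that lets $r(t)-\bar r^n$ pull back to a difference against a constant-in-time vector; once these are in place the estimates follow from the uniform Jacobian bounds \eqref{E:Jh}.
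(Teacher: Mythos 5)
Your proposal is correct and follows essentially the same route as the paper: prove the abstract Hilbert-space bound via the fundamental theorem of calculus and Cauchy--Schwarz, then pull back along $\Phih{s}{t}$ using the transport property $\dot\varphi_i\equiv0$ and the uniform Jacobian bounds \eqref{E:Jh} to reduce both assertions to that bound. The only cosmetic differences are that the paper justifies the abstract inequality by density of smooth functions and obtains assertion \emph{2.} by integrating assertion \emph{1.} over $\tau\in I_n$, whereas you invoke the best-approximation property of the time mean; both are valid.
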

\begin{proof}
For the fist assertion approximate $r$ by $r_i\in\mathcal D([0,T],V)$ such that $r_i\stackrel{H^1([0,T],V)}{\longrightarrow}r$ as $i\rightarrow\infty$. Use
\[
\|r_i-r_i(\tau)\|_{L^2(I_n,V)}=\left(\mint{I_n}{\left\|\int_\tau^t r_i'(\theta)\d \theta \right\|_V^2}{t}\right)^\frac{1}{2}\le \left(\mint{I_n}{k \int_{I_n} \left\| r_i'(\theta) \right\|_V^2\d \theta}{t}\right)^\frac{1}{2}\le k\|r_i'\|_{L^2(I_n,V)}\,,
\]
and the fact that $r\in C([0,T],V)$, compare \cite[Thm. 3.1]{LionsMagenes1968}. Hence the first part of the lemma follows by passing to the limit.

In our situation this implies, since $\phih{\tau}{t} r(t) \in H^1([0,T],V_h(\tau))$
\[\begin{split}
\|\bar r^n-r(\tau)\|_{L^2(\Gamma^h(\tau))}^2&=\Big \| \frac{1}{k}\mint{I_n}{\phih{\tau}{t} r(t)-r(\tau)}{t}\Big\|^2_{L^2(\Gamma^h(\tau))} \le \frac{1}{k}\mint{I_n}{  \Big \|\phih{\tau}{t} r(t)-r(\tau)\Big\|^2_{L^2(\Gamma^h(\tau))} }{t} \\
&\le  k  \mint{I_n}{\|\big(\phih{\tau}{t} r(t)\big)' \|^2_{L^2(\Gamma^h(\tau))}}{t}
 \le kC_J^h \mint{I_n}{\|\dot r\|^2_{L^2(\Gamma^h(t))}}{t}\,.
\end{split}\]
This proves  \emph{1.}, in order to get \emph{2.} integrate over $I_n$.
\end{proof}
We are now prepared to prove the main result of this section.

\begin{Theorem}\label{L:TimeConv}
Let $f\in L^2_{L^2(\Gamma)}$, and let $y_h$ and $y_{h,k}$ solve \eqref{E:WeakTransport_d} and \eqref{E:FullyDisc}, respectively, with $y_0^h\in L^2(\Gamma_0^h)$  and $f_h=f_l$. There exists a constant $C>0$ independent of $h,k>0$ and of $f$ and $y_0^h$ such that
\[
\|y_h-y_{h,k}\|_{L^2_{L^2(\Gamma^h)}}\le Ck\left(\|\dot y_h\|_{L^2_{L^2(\Gamma^h)}}+ \|f\|_{L^2_{L^2(\Gamma)}}+\|y_0^h\|_{L^2(\Gamma_0^h)}\right)\,.
\]
\end{Theorem}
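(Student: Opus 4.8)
The plan is to prove the estimate by a discrete duality argument built on the adjoint scheme \eqref{S:DiscrAdj} and its stability Lemma \ref{L:AdjStab}. Since by Lemma \ref{L:ScalarProduct} the norms $\|\cdot\|_{L^2_{L^2(\Gamma^h)}}$ and $\|\cdot\|_{h,k}$ are equivalent uniformly in $k$, it suffices to bound $\|y_h-y_{h,k}\|_{h,k}=\sup\{\langle y_h-y_{h,k},g\rangle_{h,k}\ :\ g\in\Vk,\ \|g\|_{h,k}\le 1\}$. First I would reduce $y_h$ to its piecewise mean value: writing $\bar y_h^n=\frac1k\int_{I_n}\phih{t_n}{t}y_h\,\d t$ and using $\phih{t_n}{t}y_{h,k}=y_{h,k}^n$ on $I_n$, Lemma \ref{L:DerivativeBound}[\emph{2.}] lets me replace $\phih{t_n}{t}y_h$ by $\bar y_h^n$ on each $I_n$ up to an $L^2_{L^2(\Gamma^h)}$-error of size $Ck\|\dot y_h\|_{L^2_{L^2(\Gamma^h)}}$, so that it remains to estimate $k\sum_{n=1}^N\langle \bar y_h^n-y_{h,k}^n,g^n\rangle_n$ for the $\Vk$-function $g=\sum_n g^n\mathbf 1_{I_n}$.

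Next, given such a $g$, I let $z\in\Vk$ solve the adjoint scheme \eqref{S:DiscrAdj} with $z^{N+1}=0$ and right-hand side $g$. Testing \eqref{S:DiscrAdj} with $\eta^n:=\bar y_h^n-y_{h,k}^n$ and summing over $n$ turns the pairing $k\sum_n\langle\eta^n,g^n\rangle_n$ into $\sum_n[\langle z^n-z^{n+1},\eta^n\rangle_n+k\a_n(\eta^n,z^n)]$. I then subtract the fully discrete scheme \eqref{E:FullyDisc} for $y_{h,k}$ (tested with $z^n$) from the \emph{defect form} of \eqref{E:FullyDisc} applied to the means $\bar y_h^n$, i.e.\ I quantify how far the means of the semidiscrete solution of \eqref{E:WeakTransport_d} fail to satisfy the time-discrete scheme. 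Because the initial data coincide ($y_h(0)=y_{h,k}(0)=y_0^h$), after a summation by parts the boundary contributions telescope and the pairing collapses to a sum of consistency defects tested against $z$.

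The core of the proof is the bound on these defects, all of which I expect to be $\mathcal O(k)$. There are four sources: (i) replacing the nodal time-increments of $y_h$ by the mean-value increments, controlled through Lemma \ref{L:DerivativeBound} by $Ck\|\dot y_h\|_{L^2_{L^2(\Gamma^h)}}$; (ii) the time-lumping of $\int_{I_n}\a(t;\cdot,\cdot)\,\d t$ by $k\,\a(t_n;\cdot,\cdot)$, controlled by Lemma \ref{L:H1cont}; (iii) the quadrature remainder $\r_n$, controlled by \eqref{E:r}; and (iv) the drift of the $L^2(\Gamma^h(t))$-inner product relative to $\langle\cdot,\cdot\rangle_n$ caused by the surface motion, controlled by \eqref{E:L2NormVariation}. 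Each contribution is estimated, after a Cauchy--Schwarz step, by $Ck$ times a data/solution norm times one of the stability quantities $k\sum_n\|z^n\|^2_{H^1(\Gamma^h(t_n))}$, $\max_n\a(t_n;z^n,z^n)$, or $\frac1k\sum_n\|z^{n+1}-z^n\|_n^2$. Feeding in the adjoint stability Lemma \ref{L:AdjStab}, which bounds all three of these by $C\|g\|_{h,k}^2$, yields $\langle y_h-y_{h,k},g\rangle_{h,k}\le Ck\big(\|\dot y_h\|_{L^2_{L^2(\Gamma^h)}}+(\int_0^T\|y_h\|^2_{H^1(\Gamma^h(t))}\,\d t)^{\frac12}+\|f\|_{L^2_{L^2(\Gamma)}}\big)\|g\|_{h,k}$.

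Finally, the solution norms of $y_h$ that appear must be absorbed into the admissible right-hand side. The space-$H^1$, time-$L^2$ quantity $\big(\int_0^T\|y_h\|^2_{H^1(\Gamma^h(t))}\,\d t\big)^{\frac12}$ is bounded by $C\big(\|y_0^h\|_{L^2(\Gamma_0^h)}+\|f\|_{L^2_{L^2(\Gamma)}}\big)$ via the first stability estimate of Lemma \ref{L:Stability} (recall $\lambda\ge1$), while $\|\dot y_h\|_{L^2_{L^2(\Gamma^h)}}$ and $\|f\|_{L^2_{L^2(\Gamma)}}$ are kept explicit. Dividing by $\|g\|_{h,k}$, taking the supremum over $g$, and returning to the $L^2_{L^2(\Gamma^h)}$-norm through Lemma \ref{L:ScalarProduct} then gives the claimed estimate. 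The main obstacle is the combined bookkeeping of the second and third steps: correctly deriving the discrete duality identity with the moving-surface inner products $\langle\cdot,\cdot\rangle_n$, and then matching each of the four $\mathcal O(k)$ defect types with exactly the adjoint-stability norm that Lemma \ref{L:AdjStab} can control, so that no spurious $h^{-1}$ or $k^{-1}$ factors survive.
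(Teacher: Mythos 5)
Your proposal is correct and follows essentially the same route as the paper: a discrete duality argument against the adjoint scheme \eqref{S:DiscrAdj} with data $g=\sum_n(\bar y_h^n-y_{h,k}^n)\mathbf 1_{I_n}$, telescoping of the boundary terms, the same four consistency defects controlled by Lemma \ref{L:DerivativeBound}, Lemma \ref{L:H1cont}, \eqref{E:r} and Lemma \ref{L:ScalarProduct}, adjoint stability from Lemma \ref{L:AdjStab}, and finally Lemma \ref{L:Stability} to absorb the $H^1$-norm of $y_h$. The only cosmetic difference is that the paper tests directly with the error itself and divides by $\|\bar y_h-y_{h,k}\|_{h,k}$ rather than taking a supremum over unit-norm $g$.
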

\begin{proof}
The proof is inspired by \cite[Thm. 5.2]{SinhaDeka2005}, compare also \cite[Thm 1.2.5]{VierlingDiplomaThesis} and \cite[Thm 5.1]{MeidnerVexler2008_I}.
Test \eqref{E:WeakTransport_d} with $\phih{t}{t_n}\varphi$, $\varphi\in V_h$ and integrate over $I_n$ to obtain
\begin{equation}\label{E:IntWeak}
\langle{y_h(t_n),\varphi}\rangle_n-\langle{y_h(t_{n-1}),\varphi}\rangle_{n-1}+\mint{[0,T]}{\a(t;y_h,\varphi)}{t}=\mint{[0,T]}{ \langle f_l,\varphi\rangle_{L^2({\Gamma^h(t)})}}{t}\,.
\end{equation}
Solve the adjoint equation \eqref{S:DiscrAdj} for $z$ with both right-hand side and test function $\varphi=g=\sum_{n=1}^N (\bar y_h^n-y_{h,k}^n)\mathbf 1_{I_n}$
\begin{equation}\label{E:AdjHelp}
\mint{I_n}{\|\bar y_h^n-y_{h,k}^n\|^2_n}{t}=\langle z^n-z^{n+1},\bar y_h^n-y_{h,k}^n\rangle_{n} + k \a_n(\bar y_h^n-y_{h,k}^n,z^n)
\end{equation}
Subtract \eqref{E:IntWeak} from \eqref{E:FullyDisc}. Tested with $z$ this yields
\[\begin{split}
\langle{y_{h,k}^n-y_h(t_n),z^n}\rangle_n-\langle{y_{h,k}^{n-1}-y_h(t_{n-1}),z^n}\rangle_{n-1}+k{\a_n(y_{h,k}^n-\bar y_h^n,z^n)}=\dots\\
=\mint{I_n}{\a(t;y_h,z^n)}{t}-k{\a_n(\bar y_h^n,z^n)}+k\langle \bar f_l^n,z^n\rangle_n-\mint{I_n}{\langle f_l,z^n\rangle_{L^2(\Gamma^h(t))}}{t}
\end{split}\]
Let $\bar y_h=\sum_{n=1}^N \bar y_h^n\mathbf 1_{I_n}$. Add  \eqref{E:AdjHelp} and sum up over $1\le n\le N$ to get 
\[\begin{split}
\langle& f_l,z\rangle_{h,k}-\langle f_l,z\rangle_{L^2_{L^2(\Gamma^h)}}+\sum_{n=1}^N\mint{I_n}{\|\bar y_h-y_{h,k}\|^2_n}{t}+\mint{I_n}{\a(t;y_h,z^n)}{t}-k{\a(t_n;\bar y_h^n,z^n)}=\dots\\
&=\sum_{n=1}^N\left[k\r_n(\bar y_h^n,z^n)+\langle{\bar y_{h}^n-y_h(t_n),z^n}\rangle_n-\langle{y_{h,k}^{n-1}-y_h(t_{n-1}),z^n}\rangle_{n-1}-\langle z^{n+1},\bar y_h^n-y_{h,k}^n\rangle_{n}\right]\\
&=\langle{y_{h,k}^{N}-y_h(t_{N}),z^{N+1}}\rangle_{N}-\langle{y_{h,k}^{0}-y_h(t_{0}),z^1}\rangle_{0}+\sum_{n=1}^Nk\r_n(\bar y_h^n,z^n)+\langle{\bar y_{h}^n-y_h(t_n),z^n-z^{n+1}}\rangle_n\\
&=\sum_{n=1}^Nk\r_n(\bar y_h^n,z^n)+\langle{\bar y_{h}^n-y_h(t_n),z^n-z^{n+1}}\rangle_n\,,
\end{split}\]
and finally, bringing to bear everything we have, i.e. the estimates from  Lemma \ref{L:H1cont} for $\a$, from Lemma \ref{L:ScalarProduct} for the $L^2$-norms, and the bound on $\r$ from \eqref{E:r}, we arrive at
\[\begin{split}
\sum_{n=1}^N&\mint{I_n}{\|\bar y_h-y_{h,k}\|^2_n}{t}\le \left(k\sum_{n=1}^N\|\bar y_{h}^n-y_h(t_n)\|_n^2\right)^\frac{1}{2}\left(\frac{1}{k}\sum_{n=1}^N\|z^n-z^{n+1}\|_n^2\right)^\frac{1}{2} +\dots\\
+&C\left(k\sum_{n=1}^N\left(\mint{I_n}{\|\phih{t_n}{t}y_h\|_{H^1(\Gamma^h(t_n))}}{t}\right)^2\right)^\frac{1}{2}\left(k\sum_{n=1}^N\|z^n\|_{H^1(\Gamma^h(t_n))}^2 \right)^\frac{1}{2}+ Ck\|f\|_{L^2_{L^2(\Gamma)}}\underbrace{\|z^l\|_{L^2_{L^2(\Gamma)}}}_{\le C\|z\|_{h,k}}\,.
\end{split}\]
Hence using Lemma \ref{L:AdjStab} on $z$ we can divide by $\|\bar y_h-y_{h,k}\|_{h,k}$.  The Lemmas \ref{L:ScalarProduct} and \ref{L:H1cont} allow us to estimate the  involved norms, and because of the stability of the space discretization, compare Lemma \ref{L:Stability}, we can estimate the $H^1(\Gamma^h(t))$-term,  to finally arrive at
\begin{equation}\label{E:MainEst}
\|\bar y_h-y_{h,k}\|_{L^2_{L^2(\Gamma^h)}}\le C\left(\left(k\sum_{n=1}^N\|\bar y_{h}^n-y_h(t_n)\|_n^2\right)^\frac{1}{2}+ k\|f\|_{L^2_{L^2(\Gamma)}}+k\|y_0^h\|_{L^2(\Gamma^h_0)}\right)\,.
\end{equation}
We now apply Lemma \ref{L:DerivativeBound}[\emph{2.}] to the error $e_k=y_{h,k}-y_h$ and the averaged error $\bar e_k=y_{h,k}-\bar y_h$ and sum up to obtain $\|e_k-\bar e_k\|_{L^2_{L^2(\Gamma^h)}}\le Ck\|\dot y_h\|_{L^2_{L^2(\Gamma^h)}}$. Combine with  \eqref{E:MainEst} and \ref{L:DerivativeBound}[\emph{1.}] to estimate
\[
\|e_k\|_{L^2_{L^2(\Gamma^h)}}\le C k\|\dot y_h\|_{L^2_{L^2(\Gamma^h)}}+  \|\bar e_k\|_{L^2_{L^2(\Gamma^h)}}\le Ck\left(\|\dot y_h\|_{L^2_{L^2(\Gamma^h)}}+ \|f\|_{L^2_{L^2(\Gamma)}}+\|y_0^h\|_{L^2(\Gamma_0^h)}\right)\,.
\]
\end{proof}
With view of the stability assertions from \eqref{E:Smoothness} and Lemma \ref{L:Stability} and together with Theorem \ref{L:SpaceConv} we get the following Corollary.
\begin{Corollary} \label{C:StrongOrderConv} In the situation of Theorem \ref{L:TimeConv} let in addition $\lambda=\mu_l$ and  $y_0\in H^2(\Gamma_0)$, and choose $y_0^h$ as the piecewise linear interpolation of $(y_0)_l$. There exists a constant $C>0$ independent of $h,k>0$ and of $f$ and $y_0$  such that
$$\|y_{h,k}^l-y\|_{L^2_{L^2(\Gamma)}}\le C(h^2+k)\left(\|y_0\|_{H^2(\Gamma_0)}+ \| f \|_{L^2_{L^2(\Gamma)}}\right)\,.$$
\end{Corollary}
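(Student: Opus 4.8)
The plan is to split the full discretization error into a purely spatial and a purely temporal contribution by inserting the semi-discrete solution $y_h$ of \eqref{E:WeakTransport_d},
\begin{equation*}
\|y_{h,k}^l-y\|_{L^2_{L^2(\Gamma)}}\le \|y_{h,k}^l-y_h^l\|_{L^2_{L^2(\Gamma)}}+\|y_h^l-y\|_{L^2_{L^2(\Gamma)}}\,.
\end{equation*}
By Lemma~\ref{L:LiftContinuity} the lift $(\cdot)^l$ is an isomorphism whose operator norm differs from one only by $\mathcal O(h^2)$, so the first term is controlled by $\|y_{h,k}-y_h\|_{L^2_{L^2(\Gamma^h)}}$ up to a constant. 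It then suffices to bound the two contributions by $\mathcal O(h^2)$ and $\mathcal O(k)$, respectively.

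For the spatial term I would invoke Theorem~\ref{L:SpaceConv}. Since $y_0\in H^2(\Gamma_0)$ we certainly have $y_0\in H^1(\Gamma_0)$ and the additional regularity \eqref{E:Smoothness} holds, so the hypotheses of Theorem~\ref{L:SpaceConv} are met. With $y_0^h$ the piecewise linear interpolant of $(y_0)_l$, the initial error satisfies $\|y_h(0)-y_l(0)\|_{L^2(\Gamma_0^h)}=\|I_h(y_0)_l-(y_0)_l\|_{L^2(\Gamma_0^h)}\le Ch^2\|y_0\|_{H^2(\Gamma_0)}$ by the standard nodal interpolation estimate, which requires $y_0\in H^2$ so that the interpolant is well-defined on the surface triangulation. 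Feeding this into the bound of Theorem~\ref{L:SpaceConv} yields $\|y_h^l-y\|_{L^2_{L^2(\Gamma)}}\le Ch^2(\|y_0\|_{H^2(\Gamma_0)}+\|f\|_{L^2_{L^2(\Gamma)}})$.

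For the temporal term I would apply Theorem~\ref{L:TimeConv}, which gives
\begin{equation*}
\|y_h-y_{h,k}\|_{L^2_{L^2(\Gamma^h)}}\le Ck\left(\|\dot y_h\|_{L^2_{L^2(\Gamma^h)}}+\|f\|_{L^2_{L^2(\Gamma)}}+\|y_0^h\|_{L^2(\Gamma_0^h)}\right)\,.
\end{equation*}
The one quantity here that is not yet manifestly $h$-uniform is $\|\dot y_h\|_{L^2_{L^2(\Gamma^h)}}$, and controlling it is the crux of the argument. I would use the second, stronger stability estimate of Lemma~\ref{L:Stability}, which bounds $\|\dot y_h\|_{L^2_{L^2(\Gamma^h)}}$ in terms of $\|y_0^h\|_{H^1(\Gamma_0^h)}$ and $\|f\|_{L^2_{L^2(\Gamma)}}$. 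The remaining task is to absorb $\|y_0^h\|_{H^1(\Gamma_0^h)}$, which is precisely where $y_0\in H^2(\Gamma_0)$ is indispensable: the nodal interpolant is $H^1$-stable, $\|I_h(y_0)_l\|_{H^1(\Gamma_0^h)}\le C\|y_0\|_{H^2(\Gamma_0)}$, by combining $\|(y_0)_l\|_{H^1}$ with the $\mathcal O(h)$ interpolation error in $H^1$.

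Together with the trivial bound $\|y_0^h\|_{L^2(\Gamma_0^h)}\le C\|y_0\|_{H^2(\Gamma_0)}$ this produces $\|y_h-y_{h,k}\|_{L^2_{L^2(\Gamma^h)}}\le Ck(\|y_0\|_{H^2(\Gamma_0)}+\|f\|_{L^2_{L^2(\Gamma)}})$, and a final application of Lemma~\ref{L:LiftContinuity} transfers this bound to $\|y_{h,k}^l-y_h^l\|_{L^2_{L^2(\Gamma)}}$. Adding the spatial and temporal estimates yields the claimed $\mathcal O(h^2+k)$ rate. The main obstacle is thus the $h$-uniform control of $\|\dot y_h\|_{L^2_{L^2(\Gamma^h)}}$ via the refined stability of Lemma~\ref{L:Stability} and the $H^1$-stability of the interpolation; everything else is bookkeeping of already-established stability and interpolation facts.
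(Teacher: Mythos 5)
Your proposal is correct and follows essentially the same route as the paper, which states the corollary as a direct consequence of Theorem~\ref{L:SpaceConv}, Theorem~\ref{L:TimeConv}, the regularity estimate \eqref{E:Smoothness}, and the second stability bound of Lemma~\ref{L:Stability}. Your fleshing-out of the key point — controlling $\|\dot y_h\|_{L^2_{L^2(\Gamma^h)}}$ uniformly in $h$ via the $H^1$-stability of the nodal interpolant of $(y_0)_l$, which is exactly where $y_0\in H^2(\Gamma_0)$ enters — matches the intended argument.
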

As addressed in Remark \ref{R:Projection}, it should be possible to relax the condition on $y_0$ into $y_0\in H^1(\Gamma_0)$ using the $L^2(\Gamma_0)$-projection or the $L^2(\Gamma_0^h)$-projection $P_0^h$.

But even in the case of low regularity we still get a uniform estimate.
\begin{Corollary} \label{C:WeakOrderConv} In the situation of Theorem \ref{L:TimeConv} let only $y_0\in L^2(\Gamma_0)$ hold while $f\equiv 0$. Let further $y_0^h=P_0^h((y_0)_l)$. There exists a constant $C>0$ independent of $h,k>0$ and of $y_0$  such that
$$\|y_{h,k}^l-y\|_{L^2_{L^2(\Gamma)}}\le C\left(h+\sup_{t\in[0,T]}\|\lambda^l-\mu\|_{L^\infty(\Gamma(t))}+\frac{k}{h}\right)\|y_0\|_{L^2(\Gamma_0)}\,.$$
\end{Corollary}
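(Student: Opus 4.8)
The plan is to combine the spatial estimate of Theorem~\ref{L:WeakSpaceConv} with the temporal estimate of Theorem~\ref{L:TimeConv} through a triangle inequality. Writing
\[
\|y_{h,k}^l-y\|_{L^2_{L^2(\Gamma)}}\le \|y_{h,k}^l-y_h^l\|_{L^2_{L^2(\Gamma)}}+\|y_h^l-y\|_{L^2_{L^2(\Gamma)}}\,,
\]
the second summand is, under the present hypotheses $f\equiv0$, $y_0\in L^2(\Gamma_0)$ and $y_0^h=P_0^h((y_0)_l)$, exactly the situation of Theorem~\ref{L:WeakSpaceConv}, which contributes $C\big(h+\sup_{t\in[0,T]}\|\lambda^l-\mu\|_{L^\infty(\Gamma(t))}\big)\|y_0\|_{L^2(\Gamma_0)}$. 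It then remains to bound the first summand, i.e. the genuine time-discretization error, by $C\tfrac{k}{h}\|y_0\|_{L^2(\Gamma_0)}$.

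For that first summand I would first transfer from $\Gamma$ to $\Gamma^h$ using Lemma~\ref{L:LiftContinuity}: since the lift $(\cdot)^l$ is an isomorphism with norm bounded by $1+Ch^2$, one has $\|y_{h,k}^l-y_h^l\|_{L^2_{L^2(\Gamma)}}\le C\|y_{h,k}-y_h\|_{L^2_{L^2(\Gamma^h)}}$. Applying Theorem~\ref{L:TimeConv} with $f_h=f_l$ and the present initial datum $y_0^h$ gives
\[
\|y_{h,k}-y_h\|_{L^2_{L^2(\Gamma^h)}}\le Ck\big(\|\dot y_h\|_{L^2_{L^2(\Gamma^h)}}+\|f\|_{L^2_{L^2(\Gamma)}}+\|y_0^h\|_{L^2(\Gamma_0^h)}\big)\,,
\]
where the middle term drops out because $f\equiv0$.

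The main obstacle is the factor $\|\dot y_h\|_{L^2_{L^2(\Gamma^h)}}$, which under the present low regularity cannot be bounded uniformly in $h$. Here I would invoke the second stability estimate of Lemma~\ref{L:Stability}, which with $f\equiv0$ yields $\|\dot y_h\|_{L^2_{L^2(\Gamma^h)}}\le C\|y_0^h\|_{H^1(\Gamma_0^h)}$; an inverse estimate (cf. \cite[Thm.~17.2]{CiarletLions1991}) together with the $L^2$-stability of $P_0^h$ and the continuity of the lift $\ldown$ then gives $\|y_0^h\|_{H^1(\Gamma_0^h)}\le \tfrac{C}{h}\|y_0^h\|_{L^2(\Gamma_0^h)}\le\tfrac{C}{h}\|y_0\|_{L^2(\Gamma_0)}$. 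Since likewise $\|y_0^h\|_{L^2(\Gamma_0^h)}\le C\|y_0\|_{L^2(\Gamma_0)}$ and $k\le\tfrac{k}{h}$ for $h\le1$, one obtains $\|y_{h,k}-y_h\|_{L^2_{L^2(\Gamma^h)}}\le C\tfrac{k}{h}\|y_0\|_{L^2(\Gamma_0)}$, and adding the two contributions produces the claimed bound. The only delicate point is precisely this appearance of the inverse estimate: it is unavoidable when the datum is merely in $L^2(\Gamma_0)$, and it is exactly what produces the non-standard $\tfrac{k}{h}$ term, reflecting the absence of the $H^1$-bound on $y_0$ that would otherwise keep $\|\dot y_h\|_{L^2_{L^2(\Gamma^h)}}$ bounded uniformly in $h$.
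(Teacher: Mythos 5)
Your proposal is correct and follows essentially the same route as the paper: a triangle inequality combining Theorem~\ref{L:WeakSpaceConv} for the spatial error with Theorem~\ref{L:TimeConv} for the temporal error, and then controlling $\|\dot y_h\|_{L^2_{L^2(\Gamma^h)}}$ via the second stability estimate of Lemma~\ref{L:Stability} together with the inverse estimate $\|y_0^h\|_{H^1(\Gamma_0^h)}\le \frac{C}{h}\|y_0^h\|_{L^2(\Gamma_0^h)}$ and the $L^2$-stability of $P_0^h$ and the lift. The paper's proof is just a terser statement of exactly these steps, so nothing is missing and no genuinely different idea is involved.
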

\begin{proof}
Regarding Theorem \ref{L:WeakSpaceConv} and \ref{L:TimeConv} it remains to bound $\|\dot y_h\|_{L^2_{L^2(\Gamma^h)}}$. Like in the proof of Theorem \ref{L:WeakSpaceConv}, using Lemma \ref{L:Stability} and an inverse estimate, we arrive at the desired estimate.
\end{proof}
In particular, for $\kappa>0$, choose $k= \kappa h^2$ and $\lambda$ such that $\sup_{t\in[0,T]}\|\lambda^l-\mu\|_{L^\infty(\Gamma(t))}\le Ch$ to get an $\mathcal O(h)$-convergent scheme.

\begin{Remark}\label{R:lambda}Note that our freedom in the choice of $\r$ now allows us to finally drop the conditions on $\lambda$ and $\mu$, respectively, in \eqref{E:WeakTransport_lambda}, \eqref{E:WeakTransport_d}, and \eqref{E:big_lambda}. Let us assume we want to approximate the solution $y$ of \eqref{E:WeakTransport_lambda} with $\mu\equiv 0$, $y_0\in H^1(\Gamma(0))$, and $f\in L^2_{L^2(\Gamma)}$. Now $y_{h,k}\in\Vk$ solves
\[
\begin{split}
y_{h,k}^0=y_0^h\,,\quad&\forall \varphi\in \Vk\,,\;1\le n\le N\,:\\
\langle y_{h,k}^n,&\varphi\rangle_n -\langle y_{h,k}^{n-1},\varphi\rangle_{n-1} + k \mint{\Gamma^h(t_n)}{\nabla_{\Gamma^h(t_n)} y_{h,k}^n\cdot\nabla_{\Gamma^h(t_n)}\varphi}{\Gamma^h(t_n)}=k\langle\bar f_h^n,\varphi\rangle_{n}\,,
\end{split}
\]
iff $y_{h,m,\lambda}=\sum_{n=1}^N e^{-\lambda t_n}y_{h,k}^n\mathbf 1_{I_n}\in\Vk$, $\lambda>0$ solves 
\[
\begin{split}
y_{h,k,\lambda}^0=y_0^h\,,\quad\forall \varphi\in \Vk\,,&\;1\le n\le N\,:\\
\langle y_{h,k\lambda}^n,\varphi\rangle_n -\langle y_{h,k,\lambda}^{n-1},&\varphi\rangle_{n-1} + k \mint{\Gamma^h(t_n)}{\nabla_{\Gamma^h(t_n)} y_{h,k}^n\cdot\nabla_{\Gamma^h(t_n)}\varphi+\lambda y_{h,k\lambda}^n\varphi}{\Gamma^h(t_n)}+k\r_n(y^n,\varphi)\\
&=k\langle e^{-\lambda t_{n-1}}\bar f_h^n,\varphi\rangle_{n}\,,
\end{split}
\]
with $$k\r_n(\psi,\varphi)= (e^{\lambda k}-1-\lambda k)\langle \psi,\varphi\rangle_n+k (e^{\lambda k}-1)\mint{\Gamma^h(t_n)}{\nabla_{\Gamma^h(t_n)} \psi\cdot\nabla_{\Gamma^h(t_n)}\varphi}{\Gamma^h(t_n)}\,.$$ Taking into account that $\|e^{-\lambda t} f(t)-\sum_{n=1}^Ne^{-\lambda t_n}\mathbf 1_{I_n}f(t)\|_{L^2_{L^2(\Gamma)}}\le  k\|f\|_{L^2_{L^2(\Gamma)}}$, we apply Corollary  \ref{C:StrongOrderConv} to $y_{h,m,\lambda}$ and conclude $\|y_{h,k}^l-y\|_{L^2_{L^2(\Gamma)}}\le Ce^{\lambda T}(h^2+k)$.
\end{Remark}

\section{Variational Discretization}\label{S:VarDisc}

We now return to problem $(\mathbb P_d)$ which has the advantage over $(\mathbb P_T)$, that its adjoint equation satisfies the regularity estimate \eqref{E:Smoothness}. For $(\mathbb P_T)$ this is not  the case iff  $y_T\in L^2(\Gamma(T))\setminus H^1(\Gamma(T))$. In the spirit of \cite{Hinze2005}, let us approximate $(\mathbb P_d)$ by 
\begin{equation*}
(\mathbb{P}_d^h)\quad \left\{\begin{array}{l}
\min_{u \in L^2_{L^2(\Gamma^h)}} J(u) := \frac{1}{2}\|S_d^h(u)-(y_d)_l\|_{h,k}^2 + \frac{\alpha}{2} \|u\|_{h,k }\\
\mbox{s.t. }a\le u\le b\,,
\end{array}\right.
\end{equation*}
with $\{\Gamma^h(t)\}_{t\in[0,T]}$ as in Section \ref{S:FEDisc} and $S_d^h:(L^2_{L^2(\Gamma^h)},\langle\cdot,\cdot\rangle_{h,k})\rightarrow (L^2_{L^2(\Gamma^h)},\langle\cdot,\cdot\rangle_{h,k})$, $f_h\mapsto y_f$ is defined through the scheme \ref{E:FullyDisc} with $\lambda\equiv 0$ and $y_0^h\equiv 0$. We choose the scalar product $\langle\cdot,\cdot\rangle_{h,k}$ defined in \eqref{E:DiscSP} in order to obtain a computable scheme to evaluate ${S_d^h}^*$, namely \eqref{S:DiscrAdj} with $z^{N+1}=0$. Given snapshots $\{\Gamma^h(t_n)\}_{n=1}^N$, the product $\langle\cdot,\cdot\rangle_{h,k}$ can be evaluated exactly for functions  $\varphi_h\in\Vk$ as well as  for $P_{[a,b]}(\varphi_h)$.

Let $U_{\textup{ad}}^h=\set{ v\in L^2_{L^2(\Gamma^h)}}{a\le v\le b}$. As in \eqref{E:NecCond} the first order necessary optimality condition for an optimum $u_h$ of $(\mathbb{P}_d^h)$ is
\begin{equation}\label{E:NecCondDisc}
\langle \alpha u_h + {S_d^h}^*(S_d^hu_h -(y_d)_l),v-u_h\rangle_{h,k}\ge 0\,,\quad \forall v\in U_\textup{ad}\,.
\end{equation}
First note that as in the continuous case the $\langle\cdot,\cdot\rangle_{h,k}$-orthogonal projection onto $U_{\textup{ad}}^h$ coincides with the point-wise projection $P_{[a,b]}(v)$. 
Similar to \ref{E:FONC} we get 
\begin{equation}\label{E:FONCd}
u_h = P_{[a,b]}\left(-\frac{1}{\alpha}p_d^h(u)\right)\,,\quad p_d^h(u)={S_d^h}^*\left(S_d^hu-(y_d)_l\right)\,.
\end{equation}
Equation \eqref{E:FONCd} is amenable to a semi-smooth Newton method that, while still being implementable, operates entirely in $L^2_{L^2(\Gamma^h)}$. The implementation requires one  to resolve the boundary between the inactive set $\mathcal I_u(t_n)=\set{\gamma\in \Gamma(t_n)}{a< -\frac{1}{\alpha}p_d^h(u)[\gamma]<b}$ and the active set $\mathcal A_u(t_n) =\Gamma^h(t_n)\setminus \mathcal I_u(t_n)$ for $1\le n\le N$. For details on the implementation see \cite{HinzeVierling2010_Surf} and \cite{HinzeVierling2010}.
Note that in order to implement $S_d^h$ and ${S_d^h}^*$  according to \eqref{E:FullyDisc} and \eqref{S:DiscrAdj} for right-hand sides in $\Vk$, again one only needs to know the snapshots $\{\Gamma^h(t_n)\}_{n=0}^N$.
The solution of $(\mathbb P_d^h)$ converges towards that of $(\mathbb P_d)$ and the order of convergence is optimal in the sense that it is given by the order of convergence of $S_d^h$ and ${S_d^h}^*$.
\begin{Theorem}[Order of Convergence for $(\mathbb P_d^h)$]\label{T:Convergence}
Let $u \in L^2_{L^2(\Gamma)}$, $u_h\in L^2_{L^2(\Gamma^h)}$ be the solutions of $(\mathbb P_d)$ and $(\mathbb P_d^h)$, respectively. Let $C>1$. Then for sufficiently small $ h,k>0$ there holds
\begin{equation*}
\begin{split}
2\alpha \big\|u^l_h-u\big\|^2_{L^2_{L^2(\Gamma)}}+\big\|y^l_h-y\big\|_{L^2_{L^2(\Gamma)}}^2\le C\bigg(&2\big \langle \left(\lup{S_d^h}^*\ldown - S_d^*\right)(y-y_d),u-u_h^l\big\rangle_{L^2_{L^2(\Gamma)}}\dots\\
&+\left\|\left(\lup S_d^h\ldown-S_d\right)u\right\|_{L^2_{L^2(\Gamma)}}^2\bigg)\,,
\end{split}\end{equation*}
with $y=S_du$ and $y_h=S_d^hu_h$.
\end{Theorem}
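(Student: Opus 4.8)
The plan is to follow the variational-discretization argument of \cite{Hinze2005}, the additional difficulty being the simultaneous presence of the three inner products $\langle\cdot,\cdot\rangle_{L^2_{L^2(\Gamma)}}$, $\langle\cdot,\cdot\rangle_{L^2_{L^2(\Gamma^h)}}$ and $\langle\cdot,\cdot\rangle_{h,k}$. First I would test the continuous optimality condition \eqref{E:NecCond} with $v=u_h^l$ and the discrete condition \eqref{E:NecCondDisc} with $v=u_l$; both are admissible, since the mutually inverse lifts $\lup$, $\ldown$ act by reparametrisation through $a_t$ and hence preserve the bounds $a\le\,\cdot\,\le b$, so that $u_h^l\in U_{\textup{ad}}$ and $u_l\in U_{\textup{ad}}^h$. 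Writing $y=S_du$ and $y_h=S_d^hu_h$, this produces
\[
\langle \alpha u+S_d^*(y-y_d),\,u_h^l-u\rangle_{L^2_{L^2(\Gamma)}}\ge0\quad\text{and}\quad\langle \alpha u_h+{S_d^h}^*(y_h-(y_d)_l),\,u_l-u_h\rangle_{h,k}\ge0\,.
\]

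Next I would transport the second inequality onto $\Gamma$. Replacing $\langle\cdot,\cdot\rangle_{h,k}$ by $\langle\cdot,\cdot\rangle_{L^2_{L^2(\Gamma^h)}}$ through Lemma \ref{L:ScalarProduct} (a factor $1+Ck$) and the latter by $\langle\cdot,\cdot\rangle_{L^2_{L^2(\Gamma)}}$ through Lemma \ref{L:LiftContinuity} (a factor $1+Ch^2$), the same chain of estimates shows that $\lup{S_d^h}^*\ldown$ is the $\langle\cdot,\cdot\rangle_{L^2_{L^2(\Gamma)}}$-adjoint of $\lup S_d^h\ldown$ up to the multiplicative error $(1+Ch^2)(1+Ck)$ -- the continuous counterpart of the exact $\langle\cdot,\cdot\rangle_{h,k}$-self-adjointness of $S_d^h$. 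Abbreviating $\widehat S=\lup S_d^h\ldown$ and $\widehat S^{*}=\lup{S_d^h}^*\ldown$ and adding the two inequalities, the $\alpha$-terms combine to $-\alpha\|u-u_h^l\|_{L^2_{L^2(\Gamma)}}^2$ and I arrive, modulo such perturbation factors, at
\[
\alpha\|u-u_h^l\|_{L^2_{L^2(\Gamma)}}^2\le\langle p-p_h^l,\,u_h^l-u\rangle_{L^2_{L^2(\Gamma)}}\,,\qquad p=S_d^*(y-y_d),\ \ p_h^l=\widehat S^{*}(y_h^l-y_d)\,,
\]
where I used that $\lup$ and $\ldown$ are mutually inverse to identify $(y_h-(y_d)_l)^l=y_h^l-y_d$.

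The heart of the argument is then the telescoping decomposition
\[
p-p_h^l=\big(S_d^*-\widehat S^{*}\big)(y-y_d)+\widehat S^{*}(S_d-\widehat S)u+\widehat S^{*}\widehat S\,(u-u_h^l)\,,
\]
obtained by inserting $\widehat S^{*}(y-y_d)$ and $\widehat S u$. Paired with $u_h^l-u$, the third summand yields the crucial nonpositive term $-\|\widehat S(u-u_h^l)\|^2$ by the approximate self-adjointness, the first summand is exactly the adjoint-operator error $\langle(\lup{S_d^h}^*\ldown-S_d^*)(y-y_d),\,u-u_h^l\rangle_{L^2_{L^2(\Gamma)}}$ of the claim, and the middle summand is controlled by Young's inequality through $\tfrac12\|(S_d-\widehat S)u\|^2+\tfrac12\|\widehat S(u-u_h^l)\|^2$, the last half being absorbed by the negative term. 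Finally the identity $y_h^l-y=-\widehat S(u-u_h^l)+(\widehat S-S_d)u$ together with the triangle inequality converts the surviving $\tfrac12\|\widehat S(u-u_h^l)\|^2$ into the state error $\tfrac14\|y_h^l-y\|^2$ at the cost of another $\|(\widehat S-S_d)u\|^2=\|(\lup S_d^h\ldown-S_d)u\|^2$ term, which is the second error term of the claim.

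The main obstacle I expect is the bookkeeping across the three inner products: one must verify that the multiplicative factors $(1+Ch^2)$ and $(1+Ck)$ accumulated when transferring \eqref{E:NecCondDisc} to $\Gamma$ and when invoking approximate self-adjointness never reverse the sign of the decisive negative term $-\|\widehat S(u-u_h^l)\|^2$, which is responsible for producing $\|y_h^l-y\|^2$ on the left. Because an arbitrary $C>1$ is allowed and $h,k$ are taken sufficiently small, these factors -- together with the mismatch between the numerical constants $\tfrac14$ and $1$ -- are absorbed into the constant on the right, yielding the stated inequality.
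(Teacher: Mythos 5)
Your overall strategy is the paper's: both proofs run the Hinze/Hinze--Pinnau--Ulbrich perturbation argument, combining the two optimality conditions, isolating the adjoint-consistency pairing and the state-consistency norm, and extracting a negative quadratic term that supplies the state error on the left. There are two deviations worth recording. First, the paper never tests the continuous variational inequality with $u_h^l$ and never transfers an inequality between inner products. It instead uses $u=P_{[a,b]}(-\frac1\alpha p_d(u))$, the fact that the lift commutes with the pointwise projection, and the observation that the pointwise projection is simultaneously the $L^2_{L^2(\Gamma)}$- and the $\langle\cdot,\cdot\rangle_{h,k}$-orthogonal projection onto the box constraints; hence $u_l=\pr{U_\textup{ad}^h}{-\frac1\alpha p_d(u)_l}$, and testing the defining inequality of that projection with $u_h$ gives $\langle \alpha u_l+p_d(u)_l,u_h-u_l\rangle_{h,k}\ge0$ directly. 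The entire computation then stays in $\langle\cdot,\cdot\rangle_{h,k}$, where ${S_d^h}^*$ is the \emph{exact} adjoint of $S_d^h$, and Lemmas \ref{L:ScalarProduct} and \ref{L:LiftContinuity} are invoked only once at the end. Your transfers do work --- the error bounds in those lemmas are multiplicative in the inner product, so the sign of a variational inequality and of $-\|\widehat{S}(u-u_h^l)\|^2$ survive for small $h,k$ --- but you must carry perturbation factors through every step rather than through one final norm equivalence.

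The one substantive shortfall is your choice of telescoping. The paper writes $p_d^h(u_h)-{S_d^h}^*(y-y_d)_l={S_d^h}^*(y_h-y_l)$ and obtains
\begin{equation*}
\langle y_h-y_l,\,S_d^h u_l-y_h\rangle_{h,k}=-\|y_h-y_l\|^2_{h,k}+\langle y_h-y_l,\,S_d^h u_l-y_l\rangle_{h,k}\,,
\end{equation*}
so the negative quadratic term \emph{is} the state error, with coefficient one after a single Young step. Your insertion of $\widehat{S}^*\widehat{S}u$ instead yields $-\|\widehat{S}(u-u_h^l)\|^2$, and converting this to the state error via $\|\widehat{S}(u-u_h^l)\|^2\ge\tfrac12\|y_h^l-y\|^2-\|(\widehat{S}-S_d)u\|^2$ leaves only $\tfrac14\|y_h^l-y\|^2$ on the left and a second copy of the consistency term on the right. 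You can still rearrange this into the claimed shape (the left-hand terms are nonnegative), but only with a constant bounded away from $1$ --- and since the right-hand side contains the sign-indefinite pairing $\langle(\lup{S_d^h}^*\ldown-S_d^*)(y-y_d),u-u_h^l\rangle_{L^2_{L^2(\Gamma)}}$, the deficit cannot simply be ``absorbed'' to recover the statement for every $C>1$. For the convergence rates subsequently drawn from the theorem this is immaterial, but it does fall short of the statement as written; the remedy is precisely the paper's choice of intermediate quantity ${S_d^h}^*(y-y_d)_l$.
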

\begin{proof} The proof is a modification of the one from \cite[Thm. 3.4]{HinzePinnauUlbrich2009}, compare also \cite{HinzeVierling2010_Surf}.
Let $ \pr{U_\textup{ad}^h}{\cdot}$ denote the $\langle\cdot,\cdot\rangle_{h,k}$-orthogonal projection onto $U_{\textup{ad}}^h$. We have
$$
u_l = P_{[a,b]}\left(-\frac{1}{\alpha}p_d(u)\right)_l = P_{[a,b]}\left(-\frac{1}{\alpha}p_d(u)_l\right)=\pr{U_\textup{ad}^h}{-\frac{1}{\alpha}p_d(u)_l}\,.
$$
Since $u_h\in U_\textup{ad}^h$, from the characterization of  $\pr{U_{ad}^h}{\cdot}$ it follows
$$
\langle-\frac{1}{\alpha}p_d(u)_l-u_l,u_h-u_l\rangle_{h,k}\le 0\,.
$$
On the other hand we can plug $u_l$ into  \eqref{E:NecCondDisc} and get
$$
\langle \alpha u_h + p_d^h( u_h),u_l-u_h\rangle_{h,k}\ge 0\,.
$$
Adding these inequalities yields
\begin{equation*}
\begin{split}
\alpha\|u_l- u_h\|^2_{h,k}\le&\big\langle\left(p_d^h( u_h)-p_d(u)_l\right),u_l- u_h\big\rangle_{h,k}\\
=&\langle p_d^h(u_h) -{S_d^h}^*(y-y_d)_l,u_l-u_h\rangle_{h,k} + \langle {S_d^h}^*(y-y_d)_l-p_d(u)_l,u_l-u_h\rangle_{h,k}\,.
\end{split}
\end{equation*}
The first addend is estimated via
\[\begin{split}
\langle p_d^h(u_h) -(S_d^h)^*(y-y_d)_l,u_l-u_h\rangle_{h,k} &= \langle y_h-y_l,S_d^hu_l-y_h\rangle_{h,k}\\
&=-\|y_h-y_l\|^2_{h,k} +\langle y_h-y_l,S_d^hu_l-y_l\rangle_{h,k}\\
&\le - \frac{1}{2}\|y_h-y_l\|^2_{h,k}+ \frac{1}{2}\|S_d^hu_l-y_l\|^2_{h,k}\,.
\end{split}\]
This yields
\[
2\alpha\|u_l- u_h\|^2_{h,k} + \|y_h-y_l\|^2_{h,k}\le 2 \langle ({S_d^h}^*\ldown - \ldown S_d^*)(y-z),u_l-u_h\rangle_{h,k} + \|S_d^hu_l-y_l\|^2_{h,k}\,.
\]
The claim follows for sufficiently small $h,k>0$, using the equivalence of the involved norms stated in  Lemma \ref{L:ScalarProduct}. 
\end{proof}
For the problem
\begin{equation*}
(\mathbb{P}_T^h)\quad \left\{\begin{array}{l}
\min_{u\in L^2_{L^2(\Gamma^h)}} J(u) := \frac{1}{2}\|S_T^h(u)-(y_T)_l\|_{L^2(\Gamma^h(T))}^2 + \frac{\alpha}{2} \|u\|_{L^2_{L^2(\Gamma^h)} }\\
\mbox{s.t. }a\le u\le b\,,
\end{array}\right.
\end{equation*}
one can prove a similar result. Here the operator $S_T^h$ is the map $f_h\rightarrow y_f(T)$, according to the scheme \eqref{E:FullyDisc} with $\lambda\equiv 0$.
\begin{Theorem}[Order of Convergence for $(\mathbb P_T^h)$]\label{T:Convergence_T}
Let $u \in L^2_{L^2(\Gamma)}$, $u_h\in L^2_{L^2(\Gamma^h)}$ be the solutions of $(\mathbb P_T)$ and $(\mathbb P_T^h)$, respectively. Let $C>1$. Then for sufficiently small $ h,k>0$ there holds
\begin{equation*}
\begin{split}
2\alpha \big\|u^l_h-u\big\|^2_{L^2_{L^2(\Gamma)}}+\big\|y^l_h-y\big\|_{{L^2(\Gamma(T))}}^2\le C\bigg(&2\big \langle \left(\lup{S_T^h}^*\ldown - S_T^*\right)(y-y_T),u-u_h^l\big\rangle_{L^2_{L^2(\Gamma)}}\dots\\
&+\left\|\left(\lup S_T^h\ldown-S_T\right)u\right\|_{{L^2(\Gamma(T))}}^2\bigg)\,,
\end{split}\end{equation*}
with $y=S_T u$ and $y_h=S_T^hu_h$.
\end{Theorem}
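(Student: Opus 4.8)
The plan is to replay the proof of Theorem~\ref{T:Convergence} almost verbatim, the only structural change being that the observation is now taken on the terminal slice $\Gamma(T)$ (resp. $\Gamma^h(T)$) instead of over the whole space-time cylinder. First I would record the two optimality systems: the continuous optimum satisfies $u=\Pab{-\frac{1}{\alpha}p_T(u)}$ with $p_T(u)=S_T^*(S_T u-y_T)$, and the discrete optimum satisfies the variational inequality
$$\langle \alpha u_h + p_T^h(u_h),\,v-u_h\rangle \ge 0\quad\forall v\in U_{\textup{ad}}^h,\qquad p_T^h(u_h)=(S_T^h)^*(S_T^h u_h-(y_T)_l),$$
where the control-side pairing is the one induced by the regularization term of $(\mathbb P_T^h)$, namely $\langle\cdot,\cdot\rangle_{L^2_{L^2(\Gamma^h)}}$ (equivalent to the computable $\langle\cdot,\cdot\rangle_{h,k}$ by Lemma~\ref{L:ScalarProduct}), and $(S_T^h)^*$ maps $L^2(\Gamma^h(T))$ back into the control space. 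As in the distributed case the orthogonal projection onto $U_{\textup{ad}}^h$ coincides with the pointwise map $\Pab{\cdot}$, so that $u_l=\pr{U_\textup{ad}^h}{-\frac{1}{\alpha}p_T(u)_l}$ and testing its defining inequality with $u_h\in U_{\textup{ad}}^h$ is admissible.

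Next I would add the two inequalities — the projection inequality tested with $u_h$, and the analogue of \eqref{E:NecCondDisc} tested with $u_l$ — to obtain $\alpha\|u_l-u_h\|^2\le\langle p_T^h(u_h)-p_T(u)_l,\,u_l-u_h\rangle$. Splitting the right-hand side by inserting the intermediate quantity $(S_T^h)^*(y-y_T)_l$, with $y=S_T u$, the data $(y_T)_l$ cancels and the first addend collapses to $\langle (S_T^h)^*(y_h-y_l),\,u_l-u_h\rangle=\langle y_h-y_l,\,S_T^h u_l-y_h\rangle_{L^2(\Gamma^h(T))}$ after moving the adjoint across, where $y_h=S_T^h u_h\in L^2(\Gamma^h(T))$. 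Writing $S_T^h u_l-y_h=(S_T^h u_l-y_l)-(y_h-y_l)$ and applying Young's inequality bounds this by $-\frac{1}{2}\|y_h-y_l\|^2_{L^2(\Gamma^h(T))}+\frac{1}{2}\|S_T^h u_l-y_l\|^2_{L^2(\Gamma^h(T))}$, which is exactly where the stated tracking error and the consistency term $\|(\lup S_T^h\ldown-S_T)u\|_{L^2(\Gamma(T))}$ originate. The remaining addend is the adjoint consistency defect $\langle ((S_T^h)^*\ldown-\ldown S_T^*)(y-y_T),\,u_l-u_h\rangle$.

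To finish I would lift everything from $\Gamma^h$ back to $\Gamma$: the inverse lift $\lup$ carries $u_l-u_h$ to $u-u_h^l$ and $(S_T^h)^*\ldown-\ldown S_T^*$ to $\lup(S_T^h)^*\ldown-S_T^*$, while Lemma~\ref{L:ScalarProduct} together with the lift-continuity estimates (Lemma~\ref{L:LiftContinuity} for the distributed control pairing and the single-slice bound \eqref{E:IntBnd} of Lemma~\ref{L:Integration} for the $L^2(\Gamma^h(T))$ tracking term) turn the discrete pairings into the continuous ones up to a factor $1+\mathcal{O}(h^2)+\mathcal{O}(k)$. Absorbing these perturbations into the constant $C>1$ and keeping the squared control error on the left by $\alpha>0$ yields the assertion.

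The main obstacle, and the only genuine difference from $(\mathbb P_d^h)$, is the bookkeeping of inner products in the adjoint relation: since $S_T^h$ maps the control space into the single-time space $L^2(\Gamma^h(T))$, its adjoint pairs the standard $L^2(\Gamma^h(T))$ product on the observation side against the control-space product on the domain side, and one must verify that the computable scheme \eqref{S:DiscrAdj}, with terminal datum $z^{N+1}$ playing the role of the observation and $g_h\equiv 0$, really realizes $(S_T^h)^*$ with respect to these pairings, i.e. $\langle S_T^h f,w\rangle_{L^2(\Gamma^h(T))}=\langle f,(S_T^h)^*w\rangle$. This is obtained by the same test-and-sum duality used for the distributed operators, only stopping the telescoping at the terminal term rather than summing a time integral. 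Everything else — the projection identity, Young's inequality, and the passage to $\Gamma$ — transfers without change, so no estimate beyond those already established is needed.
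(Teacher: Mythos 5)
Your proposal is correct and follows essentially the same route as the paper, which states this theorem without a separate proof precisely because it is the argument of Theorem~\ref{T:Convergence} transplanted to the terminal-observation setting: projection identity plus the two variational inequalities, the splitting via the intermediate adjoint state $(S_T^h)^*(y-y_T)_l$, Young's inequality, and the lift/norm-equivalence bookkeeping. Your extra care about the mixed pairing for $(S_T^h)^*$ (control space against $L^2(\Gamma^h(T))$) and the truncated telescoping in the duality check is exactly the only point that needs adjusting, so nothing is missing.
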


Now as to the convergence of $\left(\lup {S_d^h}^*\ldown- S_d^*\right)$, note that  taking the adjoint does not commute with the discretization. Indeed, apply the scheme \eqref{E:FullyDisc} to the adjoint equation \eqref{E:AdjSolOp}, i.e. $\lambda =- (\div_{\Gamma(t_n)} V)_l$ to get
\[
\begin{split}
&z_g^{N+1}=0\,,\quad\forall \varphi\in \Vk\,,\;1\le n\le N\,:\\
&\mint{I_n}{\langle \phih{t_n}{t}g_h,\varphi\rangle_{n}}{t}=\langle z_g^n,\varphi\rangle_n -\langle z_g^{n+1},\varphi\rangle_{n} +\dots\\
&k\mint{\Gamma^h(t_n)}{\nabla_{\Gamma^h(t_n)}\varphi\nabla_{\Gamma^h(t_n)}z_g^n-(\div_{\Gamma(t_n)} V)_l\varphi z^n}{\Gamma^h(t_n)}+\mint{I_n}{\langle\varphi\div_{\Gamma^h(t)} V_h, z^n\rangle_{L^2(\Gamma^h(t))}}{t}\,,
\end{split}
\]
instead of \eqref{S:DiscrAdj}.

In the situation of $(\mathbb P_d^h)$ however, this discrepancy can  be remedied by Lemma \ref{L:LiftContinuity} which implies 
\begin{equation*}
\|\ldown-\lup^*\|_{\mathcal L(L^2_{L^2(\Gamma)},L^2_{L^2(\Gamma^h)})},\|\lup-\ldown^*\|_{\mathcal L(L^2_{L^2(\Gamma^h)},L^2_{L^2(\Gamma)})}\le C h^2\,,
\end{equation*}
and due to Lemma \ref{L:ScalarProduct} which allows us to conclude
\begin{equation}\label{E:LiftAdjConv}
\|\ldown-\lup^*\|_{\mathcal L(L^2_{L^2(\Gamma)}, (L^2_{L^2(\Gamma^h)},\langle\cdot,\cdot\rangle_{h,k}))},\|\lup-\ldown^*\|_{\mathcal L(L^2_{L^2(\Gamma^h)}, (L^2_{L^2(\Gamma^h)},\langle\cdot,\cdot\rangle_{h,k}))}\le C (h^2+k)\,,
\end{equation}
if we interpret $\ldown,\lup$ as operators into or on $ (L^2_{L^2(\Gamma^h)},\langle\cdot,\cdot\rangle_{h,k})$, respectively.

Hence we get the estimate
\[\begin{split}
\left\|\lup {S_d^h}^*\ldown- S_d^*\right\|&\le \left\|(\lup -\ldown^*){S_d^h}^*\ldown \right\|+\left\|\ldown^*{S_d^h}^*(\ldown-\lup^*) \right\|+\left\|\ldown^* {S_d^h}^*\lup^*-  S_d^*\right\|\\
&\le C(k+h^2)\,,
\end{split}\]
in the $\mathcal L(L^2_{L^2(\Gamma)},L^2_{L^2(\Gamma)})$-operator norm.

Now as to $(\mathbb P_T)$, all the results from section \ref{S:FEDisc} and \ref{S:DGDisc} remain valid under the time transform $t'=T-t$. As opposed to problem $(\mathbb P_d^h)$, here it is easier to proof the convergence of $S_T^{h*}$ than that of $S_T^h$ itself. In order to discretize ${S_T}^*$ we choose $\lambda = -\div_{\Gamma^h(t)}V_h$ to approximate $\mu_l=-(\div_{\Gamma(t)}V)_l$ and 
$$\r_n(\psi,\varphi) = \mint{I_n}{\langle\varphi\div_{\Gamma^h(t)} V_h, z^n\rangle_{L^2(\Gamma^h(t))}}{t}-k\langle{\varphi\div_{\Gamma^h(t_n)} V_h, z^n}\rangle_{L^2(\Gamma^h(t_n))}\,,$$
and apply Theorem \ref{C:WeakOrderConv} to end up with $\|\lup{S_T^h}^*\ldown-S_T^*\|_{\mathcal L(L^2(\Gamma(T)),L^2_{L^2(\Gamma)})}\le C (h+\frac{k}{h})$, where ${S_T^h}^*:z_T\mapsto z\in \Vk\subset (L^2_{L^2(\Gamma^h)},\langle\cdot,\cdot\rangle_{h,k})$ according to
\[
\begin{split}
z^{N+1}=z_T\,,\quad&\forall \varphi\in \Vk\,,\;1\le n\le N\,:\\
&\langle z^n,\varphi\rangle_n -\langle z^{n+1},\varphi\rangle_{n} + k \mint{\Gamma^h(t_n)}{\nabla_{\Gamma^h(t_n)} z^n\cdot\nabla_{\Gamma^h(t_n)}\varphi}{\Gamma^h(t_n)}=0\,.
\end{split}
\]
Now in addition to \eqref{E:LiftAdjConv} we have 
\[
\|\ldown-\lup^*\|_{\mathcal L(L^2(\Gamma(T)),L^2(\Gamma^h(T)))}\le Ch^2\,,
\]
due to the inclusion \eqref{E:IntBnd}.  We conclude
\[
\left\|\lup {S_T^h}\ldown- S_T\right\|_{\mathcal L(L^2_{L^2(\Gamma)},L^2(\Gamma(T)))}\le C (h+\frac{k}{h})\,,
\]
the operator $S_T^h={S_T^h}^{**}:(L^2_{L^2(\Gamma^h)},\langle\cdot,\cdot\rangle_{h,k})\rightarrow L^2(\Gamma^h(T)))$, $f_h\mapsto y(T)$ being defined by the scheme
\[
\begin{split}
y^0\equiv 0\,,\quad&\forall \varphi\in \Vk\,,\;1\le n\le N\,:\\
&\langle y^n,\varphi\rangle_n -\langle y^{n-1},\varphi\rangle_{n-1} + k \mint{\Gamma^h(t_n)}{\nabla_{\Gamma^h(t_n)} y^n\cdot\nabla_{\Gamma^h(t_n)}\varphi}{\Gamma^h(t_n)}=k\langle \bar f_h^n,\varphi\rangle_{n}\,,
\end{split}
\]
as shows summation over $n$.
If $y_T$ is more regular, such as $y_T\in H^1(\Gamma(T))$, then one might want to apply results from \cite{DziukElliott2011_PP} that state $h^2$-convergence of the discretization $S_T^h$, yet not in the $\mathcal L(L^2_{L^2(\Gamma)},L^2(\Gamma(T)))$-norm.  In order to to so, it remains to ensure the regularity assumptions of \cite[Thm. 4.4]{DziukElliott2011_PP} to be met by the optimal control $u$.

 \begin{figure}
\center
\includegraphics[width =\textwidth]{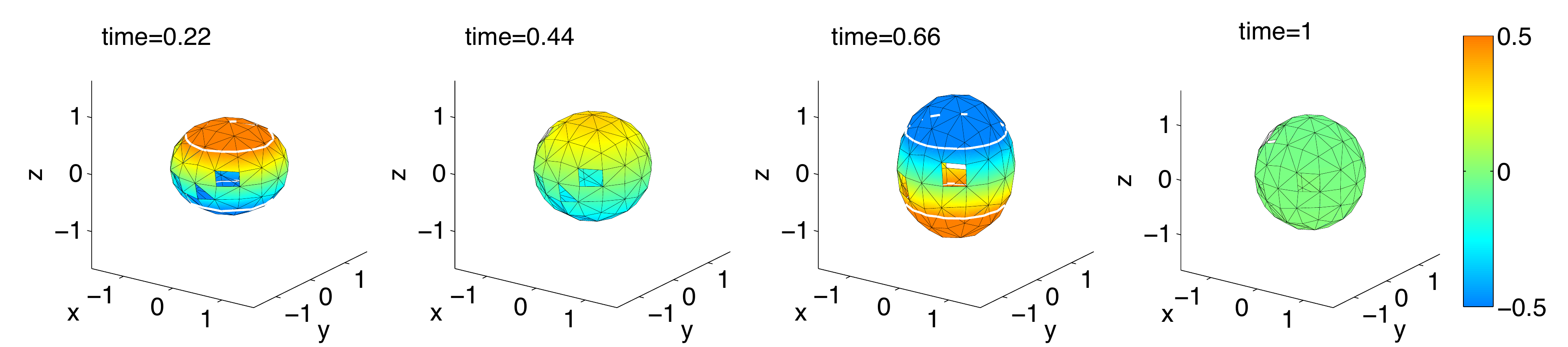}
\caption{Selected time snapshots of $\bar u_h$ computed for Example \ref{Ex:Sphere} on the Sphere after 4 refinements.}\label{Fig:Sphere}
\end{figure}

\section{Example}\label{S:Example}

Provided the results from \cite{HintermuellerItoKunisch2003} and \cite{Ulbrich2003} hold on surfaces, Equation \eqref{E:FONCd} is semi-smooth due to the smoothing properties of $S_d^{h*}$, i.e.  the stability ensured by Lemma \ref{L:AdjStab}. The lemma a priori holds only in the case $\lambda\ge 1$, but  can be extended for arbitrary $\lambda,\mu$ by rescaling, see Remark \ref{R:lambda}. 
By Lemma \ref{L:AdjStab} the operator $\phih{s}{\cdot}{S_d^h}^*$ continuously maps $(L^2_{L^2(\Gamma^h)},\langle\cdot,\cdot\rangle_{h,k})$ into $$L^\infty([0,T],H^1(\Gamma^h(s)))\subset L^p([0,T],L^p(\Gamma^h(s)))\simeq L^p([0,T]\times\Gamma^h(s))$$ for every $2<p<\infty$. This would imply semi-smoothness of the operator $$ P_{[a,b]}\left(-\frac{1}{\alpha}\phih{s}{t}\left(p_d^h\left(\phih{t}{s}(\cdot)\right)\right)\right):L^2([0,T]\times\Gamma^h(s))\rightarrow L^2([0,T]\times\Gamma^h(s))\,,$$ compare \cite{Ulbrich2003}, and thus of equation \eqref{E:FONCd}.

We implemented a semi-smooth Newton Algorithm for \eqref{E:FONCd}, along the lines of \cite{HinzeVierling2010}. 

\begin{Example}[High Regularity]\label{Ex:Sphere}
Consider problem $(\mathbb P_d)$ with $\alpha = 1$, $a=-\frac{1}{2}$, $b=\frac{1}{2}$, $T=1$, and $\Gamma_0\subset\R^3$ the unit sphere. Let $\Gamma(t)=\bar \Phi_0^t \Gamma_0$ with $\bar \Phi_0^t(x,y,z) = (x,y, {z}/{\rho^2(t)})^T$ and $\rho(t)=e^\frac{\sin(2\pi t)}{2}$. In coordinates $(x,y,z)$ of $\R^3$ let   $\bar u =P_{[-\frac{1}{2},\frac{1}{2}]}(z\sin(2\pi t))$ and $y_d = \tilde y_d + S_d\bar u$ with
   $$\tilde y_d = -\alpha\left(\left(\frac{\pi}{2}\sin(2\pi t)-2\pi\right)\cos(2\pi t) + \frac{\sin(2 \pi t)\rho(t)}{x^2+y^2+\rho^2 z^2}\left(\rho(t)+1 - z^2\frac{\rho^3(t)-\rho^2(t)}{x^2+y^2+\rho^2z^2}\right) \right)z\,.$$
   Then $\bar u$ solves $(\mathbb P_d)$.
\end{Example}

In order to compute the solution $\bar u_h$ of $(\mathbb P_d^h)$ we construct triangulations of $\Gamma_0$ from our macro-triangularion $R_0$, i.e. the cube whose nodes reside on $\Gamma_0$ triangulated into $12$ rectangular triangles. We generate $R_{i+1}$ from $R_i$  through  longest edge refinement followed by projecting the inserted vertices onto $\Gamma_0$.

Table \ref{T:Sphere} shows the relative error in the $L^2_{L^2(\Gamma^h)}$-norm and the relative $L^\infty$-error
$$ERR_\infty=\frac{\|\phih{s}{t}(\bar u_h-\bar u_l)\|_{L^\infty([0,T]\times \Gamma^h(s))}}{\|\phih{s}{t}\bar u_l\|_{L^\infty([0,T]\times \Gamma^h(s))}}\,,$$
as well as the corresponding experimental orders of convergence 
$$EOC_i = \ln\frac{ERR_i}{ERR_{i-q}}\ln \left(\ln\frac{H_i}{H_{i-q}}\right)^{-1}\,,$$
where $H$ denotes the maximal edge length of $\Gamma_0^h$, see Table \ref{T:LR}. Throughout this section we chose $q=2$ for  both $EOC_{L^2}$ and $EOC_{L^\infty}$, and the time step length is  $k=\frac{1}{20}H^2$.
\begin{table}
\begin{tabular}{r|cccc |r|cccc}
R &$ERR_{L^2}$  & $EOC_{L^2}$ & $ERR_{\infty}$ & $EOC_\infty$ &R&$ERR_{L^2}$  & $EOC_{L^2}$ & $ERR_{\infty}$ & $EOC_\infty$   \\
\hline
$0$ & 1.68e-01 &  -       & 8.71e-01 & -       & $5$ & 6.78e-03 & 2.15 & 1.08e-01 & 2.01 \\ 
$1$ & 5.40e-02 &  -       & 7.88e-01 & -       &  $6$ &3.15e-03 & 2.09 & 5.01e-02  & 1.97\\
$2$ & 4.13e-02 & 2.45 & 5.32e-01 & 0.86 & $7$ & 1.72e-03 & 2.03 & 2.80e-02 & 1.99\\
$3$ & 2.60e-02 & 1.78 & 3.78e-01 & 1.79 & $8$  & 7.92e-04 & 2.02 &1.31e-02  & 1.97\\ 
$4$ & 1.24e-02 & 2.21 & 1.82e-01 & 1.97 &&&&&\\
 \end{tabular}
 \caption{$L^2$-error, $L^\infty$-error and the corresponding EOCs for Example \ref{Ex:Sphere}.}\label{T:Sphere}
 \end{table}
 
Figure \ref{Fig:Sphere} shows the solution of $(\mathbb P_d^h)$ at different points in time. Note that the white line marks the border between active and inactive sets. On the active parts, the optimal control assumes the value $a$ or $b$, respectively.

Let us conclude with an example for $(\mathbb P_T^h)$ with a desired state $y_T$ that just barely lies in $L^2(\Gamma(T))$. In this situation we can only expect $\mathcal O(h)$-convergence. We consider the unconstrained problem
\begin{Example}[Low Regularity]\label{Ex:LR}
Consider problem $(\mathbb P_T)$ with $\alpha = 1$, $a=-\infty$, $b=\infty$, $T=1$ and $\Gamma(t)$ as in Example \ref{Ex:Sphere}. Let $y_T=\frac{1}{(x+y)^{0.45}}$.
\end{Example}
 Since we do not know the exact solution of Example \ref{Ex:LR}, we estimate the relative error by $ERR_{L^2}^i\simeq \|\bar u_i^l-\bar u_{i+2}\|_{L^2_{L^2(\Gamma^{i+2})}}/\|\bar u_{i+2}\|_{L^2_{L^2(\Gamma^{i+2})}}$, where $\bar u_i$ denotes the solution of $(\mathbb P_T^h)$ on the $i$th refinement $\{\Gamma^{i}(t)\}_{t\in[0,T]}$ of $\{\Gamma(t)\}_{t\in[0,T]}$. The lift $(\cdot)^l$ is taken perpendicular to the smooth surface $\Gamma(t)$.
 Table \ref{T:LR} shows the estimated $L^2$-errors and corresponding EOCs. We computed the $L^2(\Gamma^h(T))$-projection $P_T^h {y_T}_l$ analytically. Otherwise the error introduced by the numerical integration of the non-smooth function $y_T$ would be dominant. It helps that all our triangulations resolve the plane $\{x+y=0\}$.
 \begin{table}
 \center
\begin{tabular}{r|ccccc ccc}
R &$0$  & $1$ & $2$ & $3$ & $4$ &$5$  & $6$ & $7$   \\
\hline
$ERR_{L^2}$ &0.1984  &  0.0982   & 0.0771   & 0.0519   & 0.0369 &  0.0265   & 0.0193   & 0.0138     \\ 
$EOC_{L^2}$ & -&-&1.6460 &   1.5501 &   1.3521 &   1.0755 &   0.9928  &  0.9665\\
$H$ &1.6330    &1.1547 &   0.9194 &   0.7654 &   0.5333  &  0.4099  &  0.2769  &  0.2085 
 \end{tabular}
 \caption{$L^2$-error and the corresponding EOC for Example \ref{Ex:LR}. $H$ is the maximal edge length of $\Gamma_0^h$ (both examples). }\label{T:LR}
 \end{table}
 \section*{Acknowledgement}
The author would like to thank Prof. Dziuk for the fruitful discussion during his stay in Hamburg in November 2010, and for kindly providing the preprints \cite{DziukElliott2010_PP} and \cite{DziukElliott2011_PP}.
 
\bibliographystyle{annotate}
\bibliography{/Users/morten/Documents/Bibs/all}
\end{document}